\newtheorem{theorem}{Theorem}[section]
\newtheorem{lemma}[theorem]{Lemma}
\newtheorem{proposition}[theorem]{Proposition}
\newtheorem{conjecture}[theorem]{Conjecture}
\newtheorem{corollary}[theorem]{Corollary}
\newtheorem{claim}[theorem]{Claim}
\newtheorem{definition}[theorem]{Definition}
\newcommand{\E}[1]{\mathbb{E}\left[#1\right]}
\newcommand{\Var}[1]{\text{Var}\left(#1\right)}
\newcommand{\g}[2]{g_{_{#2,#1}}}
\newcommand{\gc}[1]{g_{_{#1}}}
\newcommand{\qed}{\hfill \ensuremath{\Box}}
\newcommand{\N}{\mathbb{N}}
\newcommand{\fP}{\mathcal{P}}
\newcommand{\size}[1]{\left| #1 \right|}
\newenvironment{proof}
      {\medskip\noindent{\bf Proof:}\hspace{1mm}}
      {\hfill$\Box$\medskip}
\def\Ddots{\mathinner{\mkern1mu\raise\p@
\vbox{\kern7\p@\hbox{.}}\mkern2mu
\raise4\p@\hbox{.}\mkern2mu\raise7\p@\hbox{.}\mkern1mu}}
\begin{document}

\title{The Erd\H{o}s-Hajnal conjecture for rainbow triangles}
\date{}

\author{
Jacob Fox\thanks{
    Department of Mathematics,
    Massachusetts Institute of Technology,
    Cambridge, MA 02139-4307.
    Email: {\tt fox@math.mit.edu}.
    Research supported by a Simons Fellowship, by NSF grant DMS-1069197, by a Sloan Foundation Fellowship, and by an MIT NEC Corporation Award.}  \and 
Andrey Grinshpun\thanks{
    Department of Mathematics,
    Massachusetts Institute of Technology,
    Cambridge, MA 02139-4307.
    Email: {\tt agrinshp@math.mit.edu}.
    Research supported by a National Physical Science Consortium Fellowship.}
   \and J\'anos Pach\thanks{EPFL, Lausanne and Courant Institute, New York, NY. Supported by Hungarian Science Foundation EuroGIGA
Grant OTKA NN 102029, by Swiss National Science Foundation Grants 200020-144531 and 200021-137574, and by NSF Grant CCF-
08-30272. Email: {\tt  pach@cims.nyu.edu}.} 
} 

\maketitle

\begin{abstract}
We prove that every $3$-coloring of the edges of the complete graph on $n$ vertices without a rainbow triangle contains a set of order $\Omega\left(n^{1/3}\log^2n\right)$ which uses at most two colors, and this bound is tight up to a constant factor. This verifies a conjecture of Hajnal which is a case of the multicolor generalization of the well-known Erd\H{o}s-Hajnal conjecture. We further establish a generalization of this result. For fixed positive integers $s$ and $r$ with $s \leq r$, we determine a constant $c_{r,s}$ such that the following holds. Every $r$-coloring of the edges of the complete graph on $n$ vertices without a rainbow triangle contains a set of order $\Omega\left(n^{r(r-1)/s(s-1)}(\log n)^{c_{r,s}}\right)$ which uses at most $s$ colors, and this bound is tight apart from the implied constant factor. The proof of the lower bound utilizes Gallai's classification of rainbow-triangle free edge-colorings of the complete graph, a new weighted extension of Ramsey's theorem, and a discrepancy inequality in edge-weighted graphs. The proof of the upper bound uses Erd\H{o}s' lower bound on Ramsey numbers by considering lexicographic products of $2$-edge-colorings of complete graphs without large monochromatic cliques. 
\end{abstract}

\section{Introduction} 

A classical result of Erd\H{o}s and Szekeres \cite{ES35}, which is a quantitative version of Ramsey's theorem \cite{R30}, implies that every graph on $n$ vertices contains a clique or an independent set of order at least $\frac{1}{2}\log n$. In the other direction, Erd\H{o}s \cite{E47} showed that a random graph on n vertices almost surely contains no clique or independent set of order $2\log n$.

An {\it induced subgraph} of a graph is a subset of its vertices together with all edges with both
endpoints in this subset. There are several results and conjectures indicating that graphs
which do not contain a fixed induced subgraph are highly structured. In particular, the most famous
conjecture of this sort by Erd\H{o}s and Hajnal \cite{EH89} says that for each fixed graph $H$ there is $\epsilon=\epsilon(H)>0$ such that every graph $G$ on $n$ vertices which does not contain a fixed induced subgraph $H$ has a clique or independent set of order $n^{\epsilon}$. This is in stark contrast to general graphs, where the order of the largest guaranteed clique or independent set is only logarithmic in the number of vertices. 

There are now several partial results on the Erd\H{o}s-Hajnal conjecture. Erd\H{o}s and Hajnal \cite{EH89}  proved that for each fixed graph $H$ there is $\epsilon=\epsilon(H)>0$ such that every graph $G$ on $n$ vertices which does not contain an induced copy of $H$ has a clique or independent set of order $e^{\epsilon\sqrt{\log n}}$. Fox and Sudakov \cite{FS09}, strengthening an earlier result of Erd\H{o}s and Hajnal, proved that for each fixed graph $H$ there is $\epsilon=\epsilon(H)>0$ such that every graph $G$ on $n$ vertices which does not contain an induced copy of $H$ has a balanced complete bipartite graph or an independent set of order $n^{\epsilon}$. All graphs on at most four vertices are known to satisfy the Erd\H{o}s-Hajnal conjecture, and Chudnovsky and Safra \cite{CS08} proved it for the $5$-vertex graph known as the bull. Alon, Pach, and Solymosi \cite{APS01} proved that if $H_1$ and $H_2$ satisfy the Erd\H{o}s-Hajnal conjecture, then for every $v$ of $H_1$, the graph formed from $H$ by substituting $v$ by a copy of $H_2$ satisfies the Erd\H{o}s-Hajnal conjecture. The recent survey \cite{C12} discusses many further related results on the Erd\H{o}s-Hajnal conjecture. 

A natural restatement of the Erd\H{o}s-Hajnal conjecture is that for every fixed red-blue edge-coloring $\chi$ of a  complete graph, there is an $\epsilon=\epsilon(\chi)>0$  such that every red-blue edge-coloring of the complete graph on $n$ vertices without a copy of $\chi$ contains a monochromatic clique of order $n^{\epsilon}$. Indeed, for the graphs $H$ and $G$, we can color the edges red and the nonadjacent pairs blue. 

Erd\H{o}s and Hajnal also proposed studying a multicolor generalization of their conjecture. It states that for every fixed $k$-coloring of the edges of $\chi$ of a complete graph, there is an $\epsilon=\epsilon(\chi)>0$ such that every $k$-coloring of the edges of the complete graph on $n$ vertices without a copy of $\chi$ contains a clique of order $n^{\epsilon}$ which only uses $k-1$ colors. They proved a weaker estimate, replacing $n^{\epsilon}$ by $e^{\epsilon \sqrt{\log n}}$. Note that the case of two colors is what is typically referred to as the Erd\H{o}s-Hajnal conjecture. 

Hajnal \cite{H08} conjectured the following special case of the multicolor generalization of the Erd\H{o}s-Hajnal conjecture holds. There is $\epsilon>0$ such that every $3$-coloring of the edges of the complete graph on $n$ vertices without a rainbow triangle (that is, a triangle with all its edges different colors), contains a set of order $n^{\epsilon}$ which uses at most two colors.  We prove Hajnal's conjecture, and further determine a tight bound on the order of the largest guaranteed $2$-colored set in any such coloring. A {\it Gallai coloring} is a coloring of the edges of a complete graph without rainbow triangles, and a {\it Gallai $r$-coloring} is a Gallai coloring that uses $r$ colors.

\begin{theorem} \label{firsttheorem}
Every Gallai $3$-coloring on $n$ vertices contains a set of order $\Omega(n^{1/3}\log^2 n)$ which uses at most two colors, and this bound is tight up to a constant factor. 
\end{theorem}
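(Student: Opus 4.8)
The plan is to prove the two bounds separately; the lower bound is the substantial half. For it I would induct on $n$ using Gallai's classification of rainbow-triangle-free colorings: the vertex set of any Gallai coloring of $K_n$ partitions into parts $V_1,\dots,V_m$ with $m\ge 2$ such that only one color appears between each pair of parts and the resulting reduced $2$-coloring of $K_m$ uses only two of the three colors --- say it omits color $c$, using colors $a$ and $b$. Writing $h_\gamma(G)$ for the order of the largest vertex subset of $G$ whose internal edges avoid color $\gamma$ (so the quantity we want to bound is $\max_\gamma h_\gamma(G)$), there are two ways to assemble such sets from the partition. First, choosing inside \emph{every} part a set avoiding $c$ and combining them gives a set avoiding $c$, since the colors between parts already avoid $c$; hence $h_c(G)\ge\sum_j h_c(V_j)$. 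Second, taking a monochromatic clique $C$ of the reduced color $b$ and, inside each part of $C$, a set avoiding $a$, and combining them gives a set avoiding $a$; hence $h_a(G)\ge\sum_{j\in C}h_a(V_j)$ for a suitable $C$ (and symmetrically for $b$). I would design a single numerical invariant of the triple $(h_1(G),h_2(G),h_3(G))$ that is controlled from below by these two operations and, via the induction hypothesis applied to the parts together with the constraint $\prod_j|V_j|=n$, comes out to $\Omega(n^{1/3}\log^2 n)$.

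The crux is the second operation: Ramsey's theorem only yields a monochromatic clique $C$ of order $\Omega(\log m)$, after which $\sum_{j\in C}h_a(V_j)$ could be as small as $\Omega(\log m)\cdot\min_j h_a(V_j)$, and iterating a minimum is far too lossy. Here I would use a weighted extension of Ramsey's theorem: in any $2$-coloring of $K_m$ with nonnegative vertex weights there is a monochromatic clique carrying weight large compared with both the clique order times the average weight and with the single largest weight, so that, with weights $h_a(V_j)$, one recovers a genuine sum over $C$ rather than a minimum. A discrepancy inequality for edge-weighted graphs is then needed to control how the two reduced color classes split the weight, and thus to guarantee that the heavy monochromatic clique occurs in the color one actually wants (failing which, the first operation is used instead). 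Balancing the three cases against $\prod_j|V_j|=n$ produces the exponent $1/3$, and the fact that two of the three ``levels'' of the recursion get squeezed down to logarithmic monochromatic cliques produces the factor $\log^2 n$. I expect the main difficulty to lie precisely in this bookkeeping --- choosing the invariant and the weighted-Ramsey/discrepancy estimates so that not even a constant factor is lost per level of a recursion that may be $\Theta(\log n)$ deep, and treating degenerate Gallai partitions (one enormous part, or very many tiny ones) within the same framework.

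For the matching upper bound I would exhibit an extremal Gallai $3$-coloring as a lexicographic product of three Erd\H{o}s colorings. Set $t=\lceil n^{1/3}\rceil$ and, for each $i\in\{1,2,3\}$, let $\chi^{(i)}$ be a $2$-coloring of $K_t$ using the two colors in $\{1,2,3\}\setminus\{i\}$ with no monochromatic clique of order exceeding $O(\log t)$, which exists by Erd\H{o}s' lower bound on Ramsey numbers. On the vertex set $[t]^3$, color the edge between two distinct triples by $\chi^{(i)}$, where $i$ is the first coordinate in which they differ. Examining the first coordinate in which three given triples are not all equal shows there is no rainbow triangle, and since the three colorings together use all three colors this is a Gallai $3$-coloring. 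If $S\subseteq[t]^3$ uses only two colors, say it avoids color $\ell$, then at each level $i\ne\ell$ the set $S$, once the coordinates before and after $i$ are fixed, must be monochromatic in $\chi^{(i)}$ and so contributes a factor of only $O(\log t)$, while level $\ell$ contributes at most $t$; multiplying gives $|S|=O(t\log^2 t)=O(n^{1/3}\log^2 n)$, as required.
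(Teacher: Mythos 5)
Your upper bound is correct and is essentially the paper's own construction (a lexicographic product of three Erd\H{o}s colorings of $K_{\lceil n^{1/3}\rceil}$, one for each pair of colors), and your lower-bound framework --- Gallai partition, the two assembly operations $h_c(G)\ge\sum_j h_c(V_j)$ and $h_a(G)\ge\sum_{j\in C}h_a(V_j)$ over a monochromatic reduced clique, induction on a quantity built from the triple --- matches the paper's. But the two places you defer are where the proof actually lives, and the key lemma you propose is the wrong one. A weighted Ramsey theorem with a single weight per vertex that promises only ``a monochromatic clique of large weight'' does not let you dictate the clique's color, and the gain you need is a factor $\Omega(\log^2 t)$ on the \emph{product} $h_a\cdot h_b$ of the two reduced colors, one logarithm on each, measured against the same parts $V_j$. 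No discrepancy statement about how the two reduced color classes ``split the weight'' rescues this: a part can have $h_a(V_j)$ huge and $h_b(V_j)$ tiny while the reduced coloring is entirely in the color where there is nothing to collect. The paper's lemma has a different shape precisely for this reason: each part carries the \emph{pair} of weights $(o_j,p_j)=(h_a(V_j),h_b(V_j))$, and in any $2$-coloring of $K_t$ there are a red clique $S$ and a blue clique $U$ with $\left(\sum_{j\in S}o_j\right)\left(\sum_{j\in U}p_j\right)\ge\frac{\gamma}{32}\log^2 t$, where $\gamma=\min_j o_jp_j$; this product form eliminates the color-matching problem, and the paper explicitly notes that the natural one-parameter (additive) variant is false. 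Moreover, the discrepancy inequality from the abstract plays no role in the three-color proof at all --- it is used only in the general $(r,s)$ case, applied to an edge-weighted graph on the set of colors --- so invoking it here is a misidentification of the tool; and the fallback ``use the first operation instead'' cannot compensate, since that operation only grows $h_c$, while both logarithms must be won on $h_ah_b$.

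The second gap is the deferred bookkeeping, which is the heart of the matter rather than a technicality. The paper does not induct on the bare statement: with $m$ fixed it proves that every Gallai coloring on $n\le m$ vertices satisfies either $\max_S \g{F}{S}\ge m^{7/18}/8$ or $g\,o\,p\ge nf(n)$, where $f$ is a deliberately flattened three-piece function (of the form $c\log^2(Cn)$ on $(0,m^{4/9}]$, then $c^2\log^2(m^{4/9})\log^2(Cnm^{-4/9})$, etc.). The escape clause and the interval structure of $f$ are exactly the answer to the worry you raise: the recursion may be $\Theta(\log n)$ deep, so the $\log^2$ gain can only be banked a bounded number of times, and the cases of one dominant part versus many small parts need genuinely different arguments (convexity-type ``facts about $f$'' in the former; a dyadic grouping of parts by size, discarding the aggregate of parts smaller than about $nm^{-7/18}$, and a median trick before applying the weighted Ramsey lemma in the latter). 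None of this is supplied by your sketch, so while the plan points in the right direction and the upper bound stands, the lower bound as proposed is not yet a proof.
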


To give an upper bound, we use lexicographic products. We will let $[m]=\{1,\ldots,m\}$ denote the set consisting of the first $m$ positive integers. 

\begin{definition}\label{lexprod}
Given edge-colorings $F_1$ of $K_{m_1}$ and $F_2$ of $K_{m_2}$ using colors from $R$, the \emph{lexicographic product coloring} $F_1 \otimes F_2$ of $E(K_{m_1m_2})$ is defined on any edge $e=\{(u_1,v_1),(u_2,v_2)\}$ (where we take the vertex set of $K_{m_1m_2}$ to be $[m_1] \times [m_2]$) to be $F_1(u_1,u_2)$ if $u_1 \neq u_2$, and otherwise $v_1 \neq v_2$ and it is defined to be $F_2(v_1,v_2)$.
\end{definition}
That is, there are $m_1$ disjoint copies of $F_2$ and they are connected by edge colors defined by $F_1$.

The upper bound in Theorem \ref{firsttheorem} is obtained by taking the lexicographic product of three $2$-edge-colorings of the complete graph on $n^{1/3}$ vertices, where each pair of colors is used in one of the colorings, and the largest monochromatic clique in each of the colorings is of order $O(\log n)$. A simple lemma in the next section shows that, in a lexicographic product coloring $F = F_1 \otimes F_2$, the largest set of vertices using only colors red and blue (for example) in $F$ has size equal to the product of the size of the largest set of vertices using only colors red and blue in $F_1$ with the size of the largest set of vertices using only colors red and blue in $F_2$. For any set $S$ of two of the three colors, the largest such set has order $O(n^{1/3})O(\log n)O(\log n)=O(n^{1/3}\log^2 n)$. 

In the other direction, we will utilize the following important structural result of Gallai \cite{Ga67} on edge-colorings of complete graphs without rainbow triangles. 

\begin{lemma}\label{lemmagallai} An edge-coloring $F$ of a complete graph on a vertex set $V$ with $|V| \geq 2$ is a Gallai coloring if and only if $V$ may be partitioned into nonempty sets $V_1,\ldots,V_t$ with $t > 1$ so that each $V_i$ has no rainbow triangles under $F$, at most two colors are used on the edges not internal to any $V_i$, and the edges between any fixed pair $(V_i,V_j)$ use only one color. Furthermore, any such substitution of Gallai colorings for vertices of a $2$-edge-coloring of a complete graph $K_t$ yields a Gallai coloring.
\end{lemma}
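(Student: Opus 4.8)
The plan is to prove the two implications separately; the ``if'' direction (including the final sentence) is routine, while the ``only if'' direction — the substance of Gallai's theorem — is where all the work lies. For the ``if'' direction, suppose $V=V_1\cup\cdots\cup V_t$ is a partition with the stated properties and take any triangle on vertices $x,y,z$. If all three lie in a common $V_i$ there is no rainbow triangle by hypothesis; if exactly two, say $x$ and $y$, lie in some $V_i$ and $z\in V_j$, then $F(x,z)=F(y,z)$ because the pair $(V_i,V_j)$ is monochromatic, so the triangle is not rainbow; and if $x,y,z$ lie in three distinct parts, all three edges are non-internal and therefore use only the two allowed colors. Hence $F$ is a Gallai coloring; this is exactly the content of the last sentence as well, taking the $V_i$ to be the vertex classes of the given $2$-edge-coloring of $K_t$.

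For the ``only if'' direction I would use strong induction on $n=|V|$, the case $n=2$ being immediate. Let $F$ be a Gallai coloring of $K_n$ with $n\ge3$. If $F$ uses at most two colors, take all $V_i$ to be singletons: every edge is then non-internal, at most two colors appear, and each pair of parts consists of a single edge. So assume $F$ uses at least three colors, and call a set $W\subseteq V$ \emph{homogeneous} if $1<|W|<n$ and, for every $u\notin W$, the color $F(u,w)$ does not depend on $w\in W$. It suffices to find a homogeneous set $W$: contracting $W$ to a single new vertex $\star$ yields an edge-coloring $F'$ of $K_{n-|W|+1}$ (on at least two and fewer than $n$ vertices) which is again a Gallai coloring, since a rainbow triangle in $F'$ lifts to one in $F$; by induction $F'$ has a valid partition $P_1,\dots,P_s$ with at most two colors between parts. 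Replacing the part containing $\star$ by that part with $\star$ expanded back into the whole set $W$ gives a partition of $V$, and one checks, using that $W$ is homogeneous, that each part remains rainbow-triangle-free, that each pair of parts remains monochromatic, and that the colors occurring between parts are exactly the (at most two) colors occurring between the $P_i$. This is the required partition of $V$.

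It remains to show that a Gallai coloring using at least three colors always has a homogeneous set — equivalently, that a ``prime'' Gallai coloring (one with no homogeneous set) uses at most two colors — and this is the step I expect to be the main obstacle. A useful first reduction: if for some color $c$ the graph $E_c$ of $c$-colored edges is disconnected, then a connected component $C$ of $E_c$ with $|C|\ge2$ is homogeneous. Indeed, given $u\notin C$ and $w,w'\in C$, join $w$ to $w'$ by a $c$-colored path inside $C$; for each edge $w_1w_2$ of this path the triangle on $u,w_1,w_2$ has $F(w_1,w_2)=c$ while $F(u,w_1),F(u,w_2)\neq c$, so non-rainbowness forces $F(u,w_1)=F(u,w_2)$, and walking along the path gives $F(u,w)=F(u,w')$; since $c$ is used, such a $C$ exists and is proper. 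The same argument applied to the graph of non-$c$ edges shows that in the prime case there is moreover no bipartition of $V$ all of whose crossing edges share a single color. The remaining task is to rule out a Gallai coloring with at least three colors in which every color class and its complement are connected: here I would delete a vertex $v$, apply the inductive hypothesis to $F$ restricted to $V\setminus\{v\}$ to obtain a part $U$ of size at least two that is homogeneous there (first checking that the restricted coloring still uses at least three colors, as otherwise a homogeneous set of the whole graph already appears), and then analyze the colors $F(v,\cdot)$ on $U$ through the triangles containing $v$: non-rainbowness constrains these colors enough to force $v$ to see uniformly either $U$ itself or a set built from $U$ together with $v$'s atypically colored neighbors, yielding a homogeneous set of $F$ and contradicting primeness. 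Converting this final case analysis into a clean argument is the heart of the proof.
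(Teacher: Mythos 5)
You should first note that the paper does not actually prove Lemma \ref{lemmagallai}: it is Gallai's structure theorem, quoted from \cite{Ga67} (English translation in \cite{MP01}), so there is no internal argument to compare against; any proof must reproduce the classical one. Within your proposal, the ``if'' direction and the final sentence are fine, and your reductions are correct: if some color class $E_c$ is disconnected, a component of it of size at least $2$ is homogeneous; if the graph of non-$c$ edges is disconnected, a component of it is homogeneous; and contracting a homogeneous set preserves the Gallai property, so the partition lifts by induction. Even the parenthetical claim (if $F$ restricted to $V\setminus\{v\}$ uses at most two colors while $F$ uses three, then $F$ already has a homogeneous set, namely $\{v\}$ together with the vertices joined to $v$ in a third color, or $V\setminus\{v\}$ itself) is true, though you assert rather than prove it.

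The genuine gap is exactly where you say it is, and it is not a small finishing touch: the entire content of Gallai's theorem is the claim that a rainbow-triangle-free coloring using at least three colors always admits a nontrivial homogeneous set, i.e.\ that a ``prime'' Gallai coloring uses at most two colors. Your sketch for this step does not work as stated. After deleting $v$ and taking a part $U$ of size at least $2$ from the partition of $F$ restricted to $V\setminus\{v\}$, non-rainbowness does \emph{not} force $v$ to see $U$ in one color, nor is ``$U$ together with $v$'s atypically colored neighbours'' a module in general: if $v$ sees $u_1\in U$ in color $a$ and $u_2\in U$ in color $b\neq a$, the triangle condition only requires $F(u_1,u_2)\in\{a,b\}$, which is easily satisfied, and the nontrivial module of $F$ guaranteed by the theorem may sit elsewhere entirely, not arising from any local adjustment of a single part $U$. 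A correct argument has to engage with the whole reduced $2$-coloring of the partition of $F$ restricted to $V\setminus\{v\}$ and how $v$ attaches to all of its parts (or proceed by a different induction, e.g.\ merging two colors), and that analysis is precisely the substance of Gallai's proof. As written, your proposal establishes the easy direction and the preliminary reductions but explicitly defers the heart of the theorem, so it is incomplete.
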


Gallai colorings naturally arise in several areas including in information theory \cite{KS00}, in the study of partially ordered sets, as in Gallai's original paper \cite{Ga67}, and in the study of perfect graphs \cite{CEL86}. 
There are now a variety of papers which consider Ramsey-type problems in Gallai colorings (see, e.g., \cite{CG83,FMO,GS10,GSSS10}). However, these works mainly focus on finding various monochromatic subgraphs in such colorings.

Because it may be of independent interest to the reader, we first present a particularly simple approach that will prove Hajnal's conjecture, but will not give tight bounds.

A graph is a {\it cograph} if it has at most one vertex, or if it or its complement is not connected, and all of its induced subgraphs have this property. In other words, the family of cographs consists of all those graphs that can be obtained from an isolated vertex by successively taking the disjoint union of two previously constructed cographs, $G_1$ and $G_2$, or by the join of them that we get by adding all edges between $G_1$ and $G_2$. It was shown by Seinsche \cite{Sei} that cographs are precisely those graphs which do not contain the path with three edges as an induced subgraph. It is easy to check by induction that every cograph is a perfect graph, that is, the chromatic number of every induced subgraph is equal to its clique number. 

\begin{proposition}\label{xyz312}
In any Gallai $3$-coloring of a complete graph, there is an edge-partition of the complete graph into three $2$-colored subgraphs, each of which is a cograph. 
\end{proposition}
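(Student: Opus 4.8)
The plan is to prove a slightly stronger statement by induction on $|V|$: for any edge-coloring $F$ of the complete graph on a vertex set $V$ with no rainbow triangle and colors drawn from $\{1,2,3\}$, the edge set can be partitioned into three cographs $H_{12},H_{13},H_{23}$ such that $H_{ij}$ uses only colors $i$ and $j$. (Some of the $H_{ij}$ may be empty; forgetting which colors each one uses recovers the proposition.) The base case $|V|\le 1$ is immediate, since there are no edges.

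For the inductive step I would apply Gallai's structure theorem (Lemma \ref{lemmagallai}) to obtain a partition $V=V_1\cup\cdots\cup V_t$ with $t>1$, where each $V_i$ induces a rainbow-triangle-free coloring, all edges not internal to some $V_i$ use at most two colors — say, after relabeling, colors from $\{1,2\}$ — and the edges between any fixed pair $(V_i,V_j)$ are monochromatic. Applying the induction hypothesis inside each $V_i$ (legitimate since $|V_i|<|V|$) yields, for every $i$, a partition of the edges inside $V_i$ into cographs $H_{12}^i,H_{13}^i,H_{23}^i$ with $H_{k\ell}^i$ using only colors $k,\ell$. I then set $H_{12}=E_{\mathrm{b}}\cup\bigcup_i H_{12}^i$, $H_{13}=\bigcup_i H_{13}^i$, and $H_{23}=\bigcup_i H_{23}^i$, where $E_{\mathrm{b}}$ denotes the set of all edges joining distinct parts.

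This is visibly an edge-partition with the required color restrictions: $E_{\mathrm{b}}$ and each $H_{12}^i$ use only colors $1,2$, so $H_{12}$ does, while $H_{13}$ and $H_{23}$ inherit their restrictions from the induction. The point that needs checking is that each $H_{ij}$ is a cograph. Since $H_{13}$ and $H_{23}$ have no edge between distinct parts, each is the disjoint union of the cographs $H_{13}^i$ (resp.\ $H_{23}^i$), hence a cograph. For $H_{12}$, note that $E_{\mathrm{b}}$ contains \emph{every} pair of vertices in distinct parts, because the reduced graph on $[t]$ is a complete graph; therefore $H_{12}$ is precisely the join of $H_{12}^1,\dots,H_{12}^t$, and a join of cographs is a cograph.

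The only real idea is in that last step: the two colors that Gallai's theorem permits on the between-edges are exactly the two colors of one of the three pieces produced by the induction in each part, so those between-edges can be absorbed into that piece — converting a disjoint union of cographs into a join of cographs, which remains a cograph. The main things to be careful about are keeping track of the color pairs through the induction (so that there is always a piece with the correct two colors available to absorb the between-edges) and the degenerate case where the between-edges use only a single color, which is harmless since a one-color set is contained in a pair.
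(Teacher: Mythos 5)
Your proof is correct and follows essentially the same route as the paper: induction on the number of vertices via Gallai's structure theorem, absorbing the between-part edges into the two-colored piece with the matching color pair as a join of cographs, and taking disjoint unions for the other two pairs. The explicit bookkeeping of which pair of colors each cograph uses is exactly what the paper's proof does implicitly.
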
\vspace{-0.2cm}
\begin{proof}
This follows from Gallai's structure theorem by induction on the number of vertices. The result is trivial for edge-colorings of complete graphs with fewer than two vertices, which serves as the base case. Using Lemma \ref{lemmagallai}, we get a nontrivial vertex partition of the Gallai $3$-coloring of the complete graph into parts $V_1,\ldots,V_t$ such that only two colors appear between the parts. By the induction hypothesis, we can partition the edge-set of the complete graph on $V_i$ into three cographs, each which is two-colored. For the two colors that go between the parts, we take the graph which is the join of the cographs in each $V_i$, that is, add all edges between the parts, and for each of the other two pairs of colors, we just take the disjoint union of the cographs of those two colors from each part. Since the join or disjoint union of cographs are cographs, this completes the proof by induction. 
\end{proof}

The following corollary verifies Hajnal's conjecture and, apart from the two logarithmic factors, gives the lower bound in Theorem \ref{firsttheorem}.

\begin{corollary}\label{firstcorlast}
Every Gallai $3$-coloring of $E(K_n)$ contains a $2$-colored clique with at least $n^{1/3}$ vertices. 
\end{corollary}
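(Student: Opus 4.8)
The plan is to feed the edge-partition supplied by Proposition~\ref{xyz312} into the fact, noted above, that every cograph is perfect. Recall that for any perfect graph $G$ on $m$ vertices one has $\omega(G)\cdot\alpha(G)\ge m$: indeed $\omega(G)=\chi(G)$, and a proper coloring with $\chi(G)$ colors partitions the vertex set into $\chi(G)$ independent sets, each of size at most $\alpha(G)$. I will also use that induced subgraphs and complements of cographs are again cographs (the former by definition, the latter because taking complements interchanges disjoint union and join), and that a clique in any $2$-colored subgraph is automatically a $2$-colored clique of $K_n$.

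Write the three colors as $1,2,3$ and let $G_{12},G_{13},G_{23}$ be the cographs produced by Proposition~\ref{xyz312}, where $G_{ij}$ uses only colors $i,j$ and the three of them edge-partition $K_n$. First I would look at $\omega(G_{12})$. If $\omega(G_{12})\ge n^{1/3}$ we already have the desired $2$-colored clique, so assume $\omega(G_{12})<n^{1/3}$. Since $G_{12}$ is perfect, $\alpha(G_{12})\ge n/\omega(G_{12})>n^{2/3}$, so we may fix an independent set $S$ of $G_{12}$ with $|S|>n^{2/3}$.

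The point of passing to $S$ is that no edge inside $S$ lies in $G_{12}$, so every edge of $K_S$ lies in $G_{13}$ or in $G_{23}$; since these two are edge-disjoint, $G_{13}[S]$ and $G_{23}[S]$ are complementary cographs on the vertex set $S$. Now I would examine $\omega(G_{13}[S])$. If it is at least $n^{1/3}$, we are done; otherwise, using that $G_{23}[S]=\overline{G_{13}[S]}$ and that $G_{13}[S]$ is perfect,
\[
\omega\bigl(G_{23}[S]\bigr)=\alpha\bigl(G_{13}[S]\bigr)\ge \frac{|S|}{\omega(G_{13}[S])}>\frac{n^{2/3}}{n^{1/3}}=n^{1/3},
\]
which again yields a $2$-colored clique of the required size. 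In every case we obtain a $2$-colored clique on at least $n^{1/3}$ vertices.

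There is no genuine obstacle in this argument; the only thing to be a little careful about is the bookkeeping in the middle step — checking that restricting to an independent set of $G_{12}$ really does leave just the two cographs $G_{13}[S]$ and $G_{23}[S]$ covering all of $K_S$, so that they are honest complements of one another and the perfect-graph inequality can legitimately be invoked a second time.
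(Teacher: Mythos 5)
Your proof is correct and is essentially the paper's own argument: apply Proposition~\ref{xyz312}, use perfection of the first cograph to get either a clique of order $n^{1/3}$ or an independent set of order $n^{2/3}$, and then repeat inside that independent set, where the remaining two cographs are complementary so an independent set in one is a clique in the other. The only difference is that you spell out the perfect-graph inequality $\omega\cdot\alpha\ge m$ explicitly, which the paper leaves implicit.
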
 \vspace{-0.2cm}
\begin{proof}
Indeed, applying Proposition \ref{xyz312}, either the first cograph (which is $2$-colored) contains a clique of order $n^{1/3}$ (in which case we are done), or it contains an independent set of order $n^{2/3}$. In the latter case, this independent set of order $n^{2/3}$ in the first cograph contains in the second cograph a clique of order $n^{1/3}$ or an independent set (which is a clique in the third cograph) of order $n^{1/3}$. We thus get a clique of order $n^{1/3}$ in one of the three cographs, which is a $2$-colored set. \end{proof}

Improving the lower bound further to Theorem \ref{firsttheorem} appears to be considerably harder, and uses a different proof technique,  relying on a weighted version of Ramsey's theorem and a carefully chosen induction argument. The weighted version of Ramsey's theorem shows that if each vertex of a complete graph on $t$ vertices is given a positive red weight and a positive blue weight whose product is one,  then in any red-blue edge-coloring of $K_t$, there is a red clique $S$ and a blue clique $U$ such that the product of the red weight of $S$ (the sum of the red weights of the vertices in $S$) and the blue weight of $U$ (the sum of the blue weights of the vertices in $U$) is $\Omega\left(\log^2 t\right)$. Note that this extends the quantitative version of Ramsey's theorem as the case in which all the red and blue weights are one implies that there is a monochromatic clique of order $\Omega(\log t)$.

We further consider a natural generalization of this problem to more colors, and give a tight bound in the next theorem. In order to state the result more succinctly, we introduce some notation: for positive integers $r$ and $s$ with $s\leq r$, let 
$$
c_{r,s} = 
\begin{cases}
1 & \text{if $1=s<r$ or if $s=r-1$ and $r$ is even;}\\
s(r-s) & \text{if $1<s<r-1$;}\\
1+\frac{3}{r} & \text{if $s=r-1$ and $r$ is odd;}\\ 
0 & \text{if $s=r$.} 
\end{cases}
$$

\begin{theorem}\label{gentheorem}
Let $r$ and $s$ be fixed positive integers with $s \leq r$. Every $r$-coloring of the edges of the complete graph on $n$ vertices without a rainbow triangle contains a set of order $\Omega(n^{{s \choose 2}/{r \choose 2}}\log^{c_{r,s}} n)$ which uses at most $s$ colors, and this bound is tight apart from the constant factor.
\end{theorem}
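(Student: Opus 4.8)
The statement has two halves: a tightness (upper) bound exhibiting one bad Gallai $r$-coloring, and a lower bound valid for all of them. I would obtain the upper bound by an explicit lexicographic-product construction extending the one behind Theorem~\ref{firsttheorem}. In the main range $1<s<r-1$, take the lexicographic product over all $\binom{r}{2}$ pairs $\{i,j\}$ of colors of $2$-colorings $F_{ij}$ of $K_m$ in the colors $i,j$ having no monochromatic clique of order exceeding $O(\log m)$ (Erd\H{o}s), with $m=n^{1/\binom{r}{2}}$. The multiplicativity lemma for lexicographic products (the ``simple lemma'' mentioned after Definition~\ref{lexprod}) evaluates the largest set using only a fixed set $Q$ of $s$ colors as the product over the factors of the largest set using only colors of $Q$ in each: a factor $F_{ij}$ contributes $m$ if $\{i,j\}\subseteq Q$, contributes $O(\log m)$ if exactly one of $i,j$ lies in $Q$, and contributes $1$ otherwise. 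As $Q$ contains $\binom{s}{2}$ pairs and meets $s(r-s)$ further pairs in a single color, this is $n^{\binom{s}{2}/\binom{r}{2}}(\log n)^{s(r-s)}$, which is the claimed bound since $c_{r,s}=s(r-s)$ in this range. The boundary cases $s\in\{1,r-1,r\}$ need modified constructions: $s=r$ is trivial; for $s=1$ one uses an Erd\H{o}s $2$-coloring of $K_{n-r+2}$ with $r-2$ extra vertices, the $j$-th joined to all earlier vertices in a private color (an edge between two extra vertices taking the smaller color), which adds all $r$ colors with no rainbow triangle and keeps monochromatic cliques of order $O(\log n)$; and for $s=r-1$ one uses a product tuned to whether the $r$ colors can be split into pairs --- a perfect matching of the colors when $r$ is even, giving each deleted color a single logarithmic cost, and for $r$ odd a combination of Erd\H{o}s colorings with the extremal Gallai $3$-coloring of Theorem~\ref{firsttheorem} on a leftover triple, with factor sizes tuned so that deleting the worst color costs a $(\log n)^{1+3/r}$ factor; the impossibility of pairing an odd number of colors is what produces the non-integral exponent $1+\tfrac{3}{r}$.

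For the lower bound I would induct on $n$, using Lemma~\ref{lemmagallai} to partition the vertices of a Gallai $r$-coloring into nonempty parts $V_1,\dots,V_t$ with $t\ge 2$, the reduced coloring on $[t]$ using at most two colors (say $1$ and $2$) and each pair of parts joined monochromatically. An $s$-colored set $W$ splits into cases by its footprint on $[t]$: if $W$ meets one part it is handled by the induction hypothesis there; if the inter-part edges of $W$ use color $1$ but not $2$, the parts met form a clique of the reduced coloring in color $1$ and $W=\bigcup_{i\in S}W_i$ with $S$ that clique and each $W_i\subseteq V_i$ using one common set of at most $s$ colors containing $1$ (symmetrically for $2$); and if $W$ uses both $1$ and $2$ between parts, its footprint can be all of $[t]$ and $W$ is a disjoint union of such $W_i$ using a common set of at most $s$ colors containing $1$ and $2$. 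To close the recursion one applies the hypothesis inside each $V_i$ in a refined form --- tracking, for every $s$-subset $Q$ of colors, the largest $Q$-colored set of $V_i$ --- and gives each vertex $i$ of the reduced $K_t$ a ``color-$1$ weight'' and a ``color-$2$ weight'' built from these quantities and normalized to have product $1$. The weighted extension of Ramsey's theorem described above then produces a color-$1$ clique $S$ and a color-$2$ clique $U$ with the product of their weighted sizes $\Omega(\log^2 t)$; this gain of $\log^2$ per level is what accumulates to the power $c_{r,s}$ of $\log n$. The remaining case, where $W$ uses both between-colors and one instead sums a quantity over all $t$ parts, is handled by a discrepancy inequality for edge-weighted complete graphs, which lets one pass to a sub-collection of parts with essentially uniform weights so that a few huge parts do not spoil the bound.

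The main obstacle is making this induction exactly tight, so that the accumulated power of $\log n$ is precisely $c_{r,s}$ and not merely $O(1)$ larger. This forces one to identify the correct refined invariant (the whole vector of largest $Q$-colored sets, not just its maximum), the correct recipe for the two weights so that the product-one normalization in the weighted Ramsey theorem captures the real trade-off between a part being useful with color $1$ and with color $2$, and a version of the weighted Ramsey theorem that is itself tight --- $\Theta(\log^2 t)$, sharpening Erd\H{o}s--Szekeres when all weights are $1$. The delicate parity split in $c_{r,s}$ near $s=r-1$ (integral for $r$ even, $1+\tfrac{3}{r}$ for $r$ odd) must fall out of the same accounting, mirroring the parity obstruction in the construction; obtaining it on the nose, rather than up to a constant power of $\log n$, is where the real work lies.
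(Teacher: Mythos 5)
Your upper-bound half is essentially the paper's construction and is fine: lexicographic products of Erd\H{o}s $2$-colorings indexed by a weighted graph on the color set (the complete graph with equal weights when $1<s<r-1$, a perfect matching when $s=r-1$ and $r$ is even, and a triangle plus a matching with tuned weights when $s=r-1$ and $r$ is odd), with the multiplicativity lemma converting the weighted graph into the claimed bound; your variant for $s=1$ also works. The lower bound, however, is where the theorem lives, and your sketch stops exactly at the point where the actual argument begins. You name the right ingredients (Gallai's structure theorem, induction tracking the sizes of the largest $Q$-colored sets for all $Q\in{R \choose s}$, a weighted Ramsey theorem giving $\Omega(\log^2 t)$ per application, and a discrepancy inequality), but you do not supply the mechanism that makes the accumulated power of $\log n$ come out to be exactly $c_{r,s}$, and you explicitly concede this (``making this induction exactly tight \dots is where the real work lies''). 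In the paper this mechanism is concrete: the induction does not merely carry the vector of $\g{F}{S}$, it simultaneously maintains an \emph{approximating edge-weighted graph on the color set} $R$ --- a set $\mathcal{P}\subseteq{R \choose 2}$ with weights $w_P$ and an error parameter $\epsilon$ --- satisfying $\g{F}{S}\geq\prod_{P\subseteq S}w_P$ and $\prod_P w_P\geq m^{-\epsilon}n$, together with a gain factor $f\geq\max\left((\log m)^{C\epsilon},(c\log^2 m)^{ad}\right)$ where $a=|\mathcal{P}|$. The $\log^2$ gains from the weighted Ramsey theorem are banked only in the case where all Gallai parts are small and many vertices lie in tiny parts: there one groups parts whose approximating graphs agree, adds the new pair of colors $Q$ as an edge of $\mathcal{P}$ with weight $w_Q=\sum_i w_{Q,i}$, and thereby increases $a$ by one, multiplying $f$ by $(c\log^2 m)^{d}$ with $d=\frac{r-s}{s-1}$; in the other cases one keeps a single part's structure and only controls the loss in $\epsilon$.

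The parity-dependent exponent then comes from the discrepancy lemma, whose role you have misplaced: it is not a tool for smoothing the sizes of the parts $V_1,\dots,V_t$ (that job is done by dyadic bucketing, medians, and the case split on $\alpha_1$ inside the induction), but a second-moment statement about the weighted graph on the $r$ \emph{colors}. It says that either the approximating graph has at least $a_0$ edges of positive weight --- with $a_0={r\choose 2}$ for $s<r-1$, $a_0=r/2$ for $s=r-1$ and $r$ even, and $a_0=(r+3)/2$ for $s=r-1$ and $r$ odd --- or some $s$-set of colors carries weight exceeding the average by a factor $1+\Omega(1)$, in which case property (1) of the invariant already yields a single $S$ with $\g{F}{S}\geq(mf_0)^{{s\choose 2}/{r\choose 2}}$ and one is done without any logarithmic gain. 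It is this dichotomy, applied once at the end to the structure produced by the induction, that converts the per-edge gain $(c\log^2 m)^d$ into the precise power $c_{r,s}=2a_0{s\choose 2}{r\choose 2}^{-1}d$, and in particular produces the $1+\frac{3}{r}$ exponent for odd $r$ (three vertices of degree two forced in the color graph). Without this invariant and this lemma --- or some equivalent replacement --- your outline establishes at best the weak bound $n^{{s\choose 2}/{r\choose 2}}$ of Section 7 plus an unquantified number of logarithmic gains, so the proposal as written does not prove the stated theorem.
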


We next give a brief discussion of the proof of Theorem \ref{gentheorem}. The case $s=r$ is trivial as the complete graph uses at most $r$ colors. The case $s=1$ is easy. Indeed, in this case, by the Erd\H{o}s-Szekeres bound on Ramsey numbers for $r$ colors, there is a monochromatic set of order $\Omega(\log n)$, where the implied positive constant factor depends on $r$. In the other direction, we give a construction which we conjecture is tight. 

The Ramsey number $r(t)$ is the minimum $n$ such that every $2$-coloring of the edges of the complete graph on $n$ vertices contains a monochromatic clique of order $t$. The bounds mentioned in the beginning of the introduction give $2^{t/2} \leq r(t) \leq 2^{2t}$ for $t \geq 2$. For $r$ even, consider a lexicographic product of $r/2$ colorings, each a $2$-edge coloring of the complete graph on $r(t)-1$ vertices with no monochromatic $K_t$.  This gives a Gallai $r$-coloring of the edges of the complete graph on $\left(r(t)-1\right)^{r/2}$ vertices with no monochromatic clique of order $t$. A similar construction for $r$ odd gives a Gallai $r$-coloring of the edges of the complete graph on $(t-1)\left(r(t)-1\right)^{(r-1)/2}$ vertices with no monochromatic clique of order $t$. The following conjecture which states that these bounds are best possible seems quite plausible.  It was verified by Chung and Graham \cite{CG83} in the case $t=3$.  

\begin{conjecture} 
Let $N(r,t)=\left(r(t)-1\right)^{r/2}$ for $r$ even and $N(r,t)=(t-1) \left(r(t)-1\right)^{(r-1)/2}$ for $r$ odd. For $n>N(r,t)$, every $r$-coloring of the edges of the complete graph on $n$ vertices has a rainbow triangle or a monochromatic $K_t$.  
\end{conjecture}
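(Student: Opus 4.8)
The matching lower bound is exactly the lexicographic-product construction described just above the conjecture, so the plan is to establish the upper bound: that $n>N(r,t)$ forces a monochromatic $K_t$ in every Gallai $r$-coloring of $K_n$. Write $f(r,t)$ for the largest $n$ admitting a Gallai $r$-coloring of $K_n$ with no monochromatic $K_t$. One checks directly that $f(1,t)=t-1$ and $f(2,t)=r(t)-1$, and the conjectured values satisfy the recursion $N(r,t)=(r(t)-1)\,N(r-2,t)$ with these two base cases; hence it suffices to prove the recursive inequality $f(r,t)\le (r(t)-1)\,f(r-2,t)$ for $r\ge 3$ and conclude by induction on $r$. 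In words: peeling off the two colors that appear at the top level of a Gallai decomposition should cost a factor of at most $r(t)-1$, the maximum number of vertices in a $2$-coloring of a complete graph with no monochromatic $K_t$.

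To carry this out, fix a Gallai $r$-coloring of $K_n$ with no monochromatic $K_t$ and apply Lemma \ref{lemmagallai} to get a partition $V=V_1\cup\cdots\cup V_m$, $m>1$, whose reduced coloring on $[m]$, obtained by coloring each pair $ij$ with the color between $V_i$ and $V_j$, uses only two colors, say red and blue. For each part let $p_i$, resp. $q_i$, be the order of the largest red, resp. blue, monochromatic clique inside $V_i$. Since a monochromatic clique is obtained by concatenating monochromatic cliques of a fixed color taken from several parts, the absence of a monochromatic $K_t$ forces $\sum_{i\in I}p_i\le t-1$ for every red clique $I$ of the reduced graph and $\sum_{j\in J}q_j\le t-1$ for every blue clique $J$. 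Moreover each $|V_i|$ is controlled by the Gallai coloring it carries, whose largest red and blue monochromatic cliques have orders $p_i$ and $q_i$ and which itself has no monochromatic $K_t$. The problem then reduces to an exact weighted extremal question on the reduced $2$-coloring: over all assignments of pairs $(p_i,q_i)$ with $\max(p_i,q_i)\ge 1$ satisfying the two clique-sum constraints, maximize $\sum_i\Phi(p_i,q_i)$, where $\Phi(p,q)$ denotes the largest order of a Gallai coloring of a complete graph having no monochromatic $K_t$ and whose largest red and blue monochromatic cliques have orders at most $p$ and $q$; the goal is to show this maximum equals exactly $(r(t)-1)\,f(r-2,t)$. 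An exact (rather than asymptotic) analogue of the weighted Ramsey-type inequality underlying Theorems \ref{firsttheorem} and \ref{gentheorem}, bounding how large the reduced configuration can be, together with a bound on $\Phi$ that in the clean case reads $\Phi(p,q)\le \max(p,q)\cdot f(r-2,t)$ and is proved by recursing into the $V_i$, should together pin down this optimum.

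The main obstacle — and the reason this remains a conjecture — is that the two colors used between the parts may also be used \emph{inside} the parts $V_i$, so a Gallai coloring need not be a lexicographic product of a $2$-coloring with an $(r-2)$-coloring, and the naive bound on $\Phi$ above can fail once the designated colors recur internally. Making the induction go through with no loss in the constant requires understanding exactly how much better than a lexicographic power a Gallai coloring with prescribed small monochromatic cliques in two designated colors can do, and this already demands a delicate case analysis for $r=4$, $t\ge 4$ that generalizes the argument of Chung and Graham — which is precisely the base case $t=3$. The classical obstruction of not knowing $r(t)$ itself is irrelevant here, since the conjecture is stated relative to $r(t)$; what is genuinely missing is a uniform treatment of this color-reuse phenomenon for all $r$ and $t$.
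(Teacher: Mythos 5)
The statement you are addressing is a \emph{conjecture} in the paper: the authors only supply the lexicographic-product construction showing that $N(r,t)$ vertices can be Gallai $r$-colored with no monochromatic $K_t$ (so the bound, if true, is tight), cite Chung and Graham for the case $t=3$, and offer no proof for general $t$. Your write-up correctly frames the missing direction as the inequality $f(r,t)\le (r(t)-1)\,f(r-2,t)$, and the reduction via Lemma \ref{lemmagallai} to a weighted extremal problem on the two-colored reduced graph, with the clique-sum constraints $\sum_{i\in I}p_i\le t-1$ and $\sum_{j\in J}q_j\le t-1$, is a sensible and correct setup. But it is not a proof, and you say so yourself: the step that would close the induction is never established.

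Concretely, the gap is twofold. First, the bound $\Phi(p,q)\le \max(p,q)\cdot f(r-2,t)$ is only asserted ``in the clean case,'' and the clean case is exactly the situation you cannot assume: the two colors appearing between the parts may recur inside the parts, so the coloring induced on a $V_i$ is a genuine Gallai $r$-coloring constrained only by having red and blue clique numbers at most $p_i$ and $q_i$, not an $(r-2)$-coloring, and no argument is given that such a coloring cannot beat $\max(p_i,q_i)\,f(r-2,t)$. Second, even granting a bound on $\Phi$, the ``exact weighted Ramsey-type inequality'' you invoke to solve the optimization over the reduced $2$-coloring does not exist in the paper: the weighted Ramsey theorem of Section \ref{sectweightrams} is an asymptotic statement (with a loss of a factor $32$ and a largeness hypothesis $t\ge M$) about products of red and blue weights, and nothing of the exact, constant-free form your argument requires is proved or even formulated. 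So the proposal reduces the conjecture to two unproven statements whose combined content is essentially the conjecture itself; this is a reasonable research plan, consistent with why the statement is left open in the paper, but it does not constitute a proof.
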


Having verified the easy cases $s=1$ and $s=r$ of Theorem \ref{gentheorem}, for the rest of the paper, we assume $1<s<r$.  A natural upper bound on the size of the largest set using at most $s$ colors comes from the following construction. We will let $[r]$ be the set of colors. Consider the complete graph on $[r]$, where each edge $P$ gets a positive integer weight $n_P$ such that the product of all $n_P$ is $n$. 
For each edge $P$ of this complete graph, we consider a $2$-coloring $c_P$ of the edges of the complete graph on $n_P$ vertices using the colors in $P$ and whose largest monochromatic clique has order $O(\log n_P)$, which exists by Erd\H{o}s lower bound \cite{E47} on Ramsey numbers. We then consider the Gallai $r$-coloring $c$ of the complete graph on $n$ vertices which is the lexicographic product of the ${r \choose 2}$ colorings of the form $c_P$. For each set $S$ of colors, the largest set of vertices in this edge-coloring of $K_n$ using only colors in $S$ has order $$\prod_{P \in S} n_P\prod_{|P \cap S|=1}O(\log n_P).$$ The order of the largest set using at most $s$ colors in coloring $c$ is thus the maximum of the above expression over all subsets $S$ of colors of size $s$. Therefore, we want to choose the various $n_P$ to minimize this maximum. For $s<r-1$, we give a second moment argument which shows that the best choice is essentially that the $n_P$ are all equal, i.e., $n_P=n^{1/{r \choose 2}}$ for all $P$. In this case, the above expression, for each choice of $S$, matches the claimed upper bound in Theorem \ref{gentheorem}. The case $s=r-1$ turns out to be more delicate. For $r$ even, the optimal choice turns out to be $n_P=n^{2/r}$ for $P$ an edge of a perfect matching of the complete graph with vertex set $[r]$, and otherwise $n_P=1$. For $r$ odd, we have three different edge weights. The graph on $[r]$ whose edges consist of those pairs with weight not equal to  $1$ consist of a disjoint union of a triangle and a matching with $(r-3)/2$  edges. The edges of the triangle each have weight $n^{1/r}(\log n)^{(r-3)/2r}$ and the edges of the matching each have weight $n^{2/r}(\log n)^{-3/r}$. It is straightforward to check that these choices of weights give the claimed upper bound in  Theorem \ref{gentheorem}. 

Similar to the case $r=3$ and $s=2$ mentioned above, using Gallai's structure theorem, we observe that, in any $r$-coloring of the edges of the complete graph on $n$ vertices without a rainbow triangle, the complete graph can be edge-partitioned into ${r \choose 2}$ subgraphs, each of which is a $2$-colored perfect graph. A simple argument then shows that there is a vertex subset of at least $n^{{s \choose 2}/{r \choose 2}}$ vertices which uses at most $s$ colors, which verifies the lower bound in Theorem \ref{gentheorem} apart from the logarithmic factors. Improving the lower bound further to Theorem \ref{gentheorem} is more involved, using a weighted version of Ramsey's theorem and a carefully chosen induction argument to prove this.

The rest of the paper is organized as follows. In the next section, we prove some basic properties of lexicographic product colorings. In Section \ref{sectsimple3}, we give simple proofs of lower and upper bounds in the direction of Theorem \ref{firsttheorem} which match apart from two logarithmic factors. In order to close the gap and obtain Theorem \ref{firsttheorem}, in Section \ref{sectweightrams} we prove a weighted extension of Ramsey's theorem. We complete the proof of Theorem \ref{firsttheorem} in Section \ref{tightfor3} by establishing a tight lower bound on the size of the largest $2$-colored set of vertices in any Gallai $3$-coloring of the complete graph on $n$ vertices.  The remaining sections are devoted to the proof of Theorem \ref{gentheorem}. In Section \ref{sectgeneralupperbound}, we prove the upper bound for Theorem \ref{gentheorem}. In Section \ref{sectweaklower}, we give a simple proof of a lower bound which matches Theorem \ref{gentheorem} apart from the logarithmic factors. In Section \ref{sectweight}, using the second moment method, we establish an auxiliary lemma which gives a tight bound on the minimum possible number of nonzero weights in a graph with non-negative edge weights such that no set of $s$ vertices contains sufficiently more than the average weight of a subset of $s$ vertices. We give the lower bound for Theorem \ref{gentheorem} in Section \ref{sectlowbound}, which completes the proof of this theorem. The proofs of some of the auxiliary lemmas which involve lengthy calculations are given in the appendix.  All logarithms in this paper are base $2$, unless otherwise indicated. All colorings are edge-colorings of complete graphs, unless otherwise indicated. For the sake of clarity of presentation, we systematically omit floor and ceiling signs whenever they are not crucial. We also do not make any serious attempt to optimize absolute constants in our statements and proofs.

\section{Lexicographic product colorings}

In this section, we will prove some simple results about lexicographic product colorings (Definition \ref{lexprod}). These will be useful in constructing examples of $r$-colorings that do not contain large vertex sets that use at most $s$ colors.

For such a lexicographic product coloring $F_1 \otimes F_2$ with $F_1$ on $m_1$ vertices and $F_2$ on $m_2$ vertices, we will view the vertex set interchangeably as $[m_1 \times m_2]$ and $[m_1] \times [m_2].$ For the sake of brevity, we often refer to a lexicographic product coloring as simply a product coloring.

\begin{definition}
For $F$ an edge-coloring of $K_n$ and $S \subseteq R$ a set of colors, we write that a set $Z$ of vertices is $S$-\emph{subchromatic} in $F$ if every edge internal to $Z$ takes colors (under $F$) only from $S$.
\end{definition}

When $F$ and $S$ are clear from context, we shall simply say that $Z$ is subchromatic. We will write $\g{F}{S}$ to be the size of the largest subchromatic set of vertices.

If $F$ is an edge-coloring constructed via a product of two other colorings $F_1,F_2$, then the next lemma allows us to determine $\g{F}{S}$ in terms of $\g{F_1}{S}$ and $\g{F_2}{S}$.

\begin{lemma}
For any $r$-colorings $F_1, F_2$ of $E(K_{n_1}),E(K_{n_2}),$ respectively, and any set $S \subseteq R$ of colors, $\g{F}{S}=\g{F_1}{S} \cdot \g{F_2}{S}$, where $F = F_1 \otimes F_2$.
\end{lemma}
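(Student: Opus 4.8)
The plan is to prove the two inequalities $\g{F}{S} \geq \g{F_1}{S} \cdot \g{F_2}{S}$ and $\g{F}{S} \leq \g{F_1}{S} \cdot \g{F_2}{S}$ separately, using the block structure of the lexicographic product and the fact that the product coloring restricted to a ``row'' or collapsed to the ``column index'' recovers $F_2$ or $F_1$ respectively.

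For the lower bound, let $A \subseteq [n_1]$ be a largest $S$-subchromatic set in $F_1$ and $B \subseteq [n_2]$ a largest $S$-subchromatic set in $F_2$, so $|A| = \g{F_1}{S}$ and $|B| = \g{F_2}{S}$. I claim $Z := A \times B$ is $S$-subchromatic in $F = F_1 \otimes F_2$. Indeed, take an edge $e = \{(u_1,v_1),(u_2,v_2)\}$ internal to $Z$, so $u_1, u_2 \in A$ and $v_1, v_2 \in B$. By Definition \ref{lexprod}, if $u_1 \neq u_2$ then $F(e) = F_1(u_1,u_2)$, which lies in $S$ since $A$ is $S$-subchromatic in $F_1$; if $u_1 = u_2$ then $v_1 \neq v_2$ and $F(e) = F_2(v_1,v_2)$, which lies in $S$ since $B$ is $S$-subchromatic in $F_2$. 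Either way $F(e) \in S$, so $Z$ is $S$-subchromatic, giving $\g{F}{S} \geq |Z| = \g{F_1}{S} \cdot \g{F_2}{S}$.

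For the upper bound, let $Z \subseteq [n_1] \times [n_2]$ be any $S$-subchromatic set in $F$. Let $\pi \colon [n_1] \times [n_2] \to [n_1]$ be the projection to the first coordinate, and let $A = \pi(Z) \subseteq [n_1]$ be the set of first coordinates appearing in $Z$. For any two distinct $u_1, u_2 \in A$, pick $(u_1, v_1), (u_2, v_2) \in Z$; the edge between them has color $F_1(u_1,u_2) \in S$ since $Z$ is $S$-subchromatic, so $A$ is $S$-subchromatic in $F_1$ and hence $|A| \leq \g{F_1}{S}$. Moreover, for each fixed $u \in A$, the fiber $Z_u := \{v : (u,v) \in Z\} \subseteq [n_2]$ is $S$-subchromatic in $F_2$: any edge between distinct $v_1, v_2 \in Z_u$ corresponds to the edge $\{(u,v_1),(u,v_2)\}$ internal to $Z$, which has color $F_2(v_1,v_2) \in S$. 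Hence $|Z_u| \leq \g{F_2}{S}$ for every $u \in A$. Since $Z = \bigsqcup_{u \in A} (\{u\} \times Z_u)$, we get $|Z| = \sum_{u \in A} |Z_u| \leq |A| \cdot \g{F_2}{S} \leq \g{F_1}{S} \cdot \g{F_2}{S}$. Taking the maximum over all $S$-subchromatic $Z$ yields $\g{F}{S} \leq \g{F_1}{S} \cdot \g{F_2}{S}$, and combined with the lower bound this completes the proof.

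There is essentially no obstacle here; the only thing to be careful about is the casework in Definition \ref{lexprod} (handling $u_1 = u_2$ versus $u_1 \neq u_2$ correctly) and making sure the fibers $Z_u$ partition $Z$ so the cardinalities add up exactly. The argument is purely combinatorial and the two directions are near-mirror images of each other.
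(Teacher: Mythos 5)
Your proof is correct and follows essentially the same approach as the paper: the upper bound via projecting a subchromatic set to its first coordinates and summing over the fibers, and the lower bound via the product of largest subchromatic sets in $F_1$ and $F_2$. The only difference is cosmetic (order of the two inequalities and notation).
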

\begin{proof}
Let $Z$ a set of subchromatic vertices in $F$ (so $Z \subseteq V(K_{n_1 \times n_2})$) be given. We will first show $\size{Z} \leq \g{F_1}{S} \cdot \g{F_2}{S}$.

Take $U \subseteq [n_1]$ to be the set of $u \in [n_1]$ such that there is some $v \in [n_2]$ with $(u,v) \in Z$; that is, $U$ is the subset of $[n_1]$ that is used in $Z$. For any $u \in [n_1]$, take $V_u\subseteq [n_2]$ to be the set of $v \in [n_2]$ such that $(u,v) \in Z$, that is, $V_u$ is the subset of $[n_2]$ that is paired with $u$ in $Z$. By construction, we have $Z = \bigcup_{u \in U} \{u\} \times V_u$.

Therefore, the set $U$ must be subchromatic in $F_1$, as given distinct $u_1,u_2 \in U$ there are $v_1,v_2$ so that $(u_1,v_1),(u_2,v_2) \in Z$, and hence: 
$$F_1(u_1,u_2)=F((u_1,v_1),(u_2,v_2))\in S.$$
Thus, $\size{U} \leq \g{F_1}{S}$.

Furthermore, given $u \in U$ we must have that $V_u$ is subchromatic in $F_2$, as given distinct $v_1,v_2 \in V_u$ we have that
$$F_2(v_1,v_2)=F((u,v_1),(u,v_2))\in S.$$ Therefore, $\size{V_u} \leq \g{F_2}{S}$.

Hence,
$$\size{Z} = \size{\bigcup_{u \in U} \{u\} \times V_u} =  \sum_{u \in U} \size{V_u} \leq \sum_{u \in U} \g{F_2}{S} = \size{U}\g{F_2}{S} \leq \g{F_1}{S} \cdot \g{F_2}{S}.$$
Since $Z$ was arbitrary, we get $\g{F}{S} \leq \g{F_1}{S} \cdot \g{F_2}{S}.$

We now prove that $\g{F}{S} \geq \g{F_1}{S} \g{F_2}{S}$, thus giving the desired result: take $U \subseteq [n_1]$ a subchromatic set under $F_1$ and $V \subseteq [n_2]$ a subchromatic set under $F_2$. We claim that $U \times V$ is subchromatic under $F$. Consider any distinct pairs $(u_1,v_1),(u_2,v_2) \in U \times V$. If $u_1 \neq u_2$ then 
$$F((u_1,v_1),(u_2,v_2)) = F_1(u_1,u_2) \in S,$$ and if $u_1 = u_2$ then 
$$F((u_1,v_1),(u_2,v_2))=F_2(v_1,v_2) \in S.$$
If we choose $U$ to have size $\g{F_1}{S}$ and $V$ to have size $\g{F_2}{S}$, we get $\g{F_1}{S} \cdot \g{F_2}{S} = \size{U \times V} \leq \g{F}{S}$.
\end{proof}

The next lemma states that the property of being a Gallai coloring is preserved under taking product colorings.

\begin{lemma}
If $F_1,F_2$ are Gallai $r$-colorings of $E(K_{n_1}),E(K_{n_2})$, respectively, then if $F=F_1 \otimes F_2$ then $F$ is a Gallai coloring.
\end{lemma}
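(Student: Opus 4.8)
The plan is to verify the rainbow-triangle-free condition directly by examining where the three vertices of an arbitrary triangle in $F = F_1 \otimes F_2$ lie, using the coordinate description of the vertex set as $[n_1] \times [n_2]$. Given three distinct vertices $(a_1,b_1), (a_2,b_2), (a_3,b_3)$, I would split into cases according to how many distinct values appear among the first coordinates $a_1, a_2, a_3$.

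First, if $a_1, a_2, a_3$ are pairwise distinct, then by the definition of the product coloring all three edges of the triangle receive colors $F_1(a_i,a_j)$, i.e., they form (the color pattern of) a triangle in $F_1$; since $F_1$ is Gallai, these three colors are not all distinct, so the triangle is not rainbow. Second, if all three first coordinates are equal, say $a_1 = a_2 = a_3 = a$, then $b_1, b_2, b_3$ must be pairwise distinct (the vertices are distinct), and all three edges receive colors $F_2(b_i,b_j)$, forming a triangle in $F_2$, which is not rainbow because $F_2$ is Gallai. The only remaining case is when exactly two of the first coordinates agree: say $a_1 = a_2 \neq a_3$ (and then necessarily $b_1 \neq b_2$). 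Here the edge between $(a_1,b_1)$ and $(a_2,b_2)$ has color $F_2(b_1,b_2)$, while the edges from $(a_3,b_3)$ to each of the other two vertices both have color $F_1(a_3, a_1) = F_1(a_3, a_2)$ — the same color, since the first coordinate is what determines the color and $a_1 = a_2$. So two of the three edges share a color and the triangle is not rainbow.

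Since these cases are exhaustive, no triangle of $F$ is rainbow, so $F$ is a Gallai coloring. I should also note that $F$ is an edge-coloring of a complete graph (on $n_1 n_2 \geq 1$ vertices), which is immediate from the definition, so the statement "$F$ is a Gallai coloring" is meaningful. Alternatively, one could invoke the "furthermore" clause of Lemma~\ref{lemmagallai}: $F_1 \otimes F_2$ is exactly the substitution of $n_1$ copies of the Gallai coloring $F_2$ into the vertices of $K_{n_1}$ with edge-coloring $F_1$, and $F_1$ uses at most $r$ colors — but to apply the lemma cleanly one wants $F_1$ to be expressible with at most two colors between parts, which need not hold, so the direct case analysis above is the more robust route.

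I do not expect a serious obstacle here: the argument is a short finite case check, and the only thing to be careful about is the mixed case, where the key observation is simply that the two edges incident to the "lonely" vertex $(a_3,b_3)$ are forced to receive the same color because the product coloring reads off only the first coordinate when the first coordinates differ. This is exactly the local structure that Gallai's theorem predicts, so it is reassuring rather than surprising.
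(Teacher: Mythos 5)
Your proof is correct and follows essentially the same route as the paper's: a case analysis on how many of the first coordinates of the three vertices coincide, with the mixed case settled by noting that the two edges from the vertex with the distinct first coordinate receive the same color $F_1(a_3,a_1)=F_1(a_3,a_2)$. No gaps.
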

\begin{proof}
Let any three vertices $u=(u_1,u_2),v=(v_1,v_2),w=(w_1,w_2) \in [n_1]\times [n_2]$ be given. We will show that they do not form a rainbow triangle under $F$. If $u_1=v_1=w_1$ then $F(u,v)=F_2(u_2,v_2),F(u,w)=F_2(u_2,w_2),F(v,w)=F_2(v_2,w_2)$ and so $u,v,w$ do not form a rainbow triangle by the assumption that $F_2$ is a Gallai coloring. If $u_1,v_1,w_1$ are pairwise distinct then $F(u,v)=F_1(u_1,v_1),F(u,w)=F_1(u_1,w_1),F(v,w)=F_1(v_1,w_1)$ and so $u,v,w$ do not form a rainbow triangle by the assumption that $F_1$ is a Gallai coloring. Otherwise, exactly one pair of $u_1,v_1,w_1$ are equal. Assume without loss of generality that $u_1=v_1$, $u_1 \neq w_1$, and $v_1 \neq w_1$. We have:
$$F(u,w)=F_1(u_1,w_1)=F_1(v_1,w_1) = F(v,w),$$
so again $u,v,w$ do not form a rainbow triangle.
\end{proof}

The following corollary states that we may take a product of any number of $2$-colorings and the result will be a Gallai coloring; since all $2$-colorings are Gallai colorings, it follows by induction from the previous lemma.
\begin{corollary}\label{isgallaicoloring}
If $F_1,\ldots,F_k$ are $2$-edge-colorings, then $F_1 \otimes \cdots \otimes F_k$ is a Gallai coloring.
\end{corollary}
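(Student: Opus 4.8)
The plan is to prove this by induction on $k$, with the preceding lemma (the product of two Gallai colorings is a Gallai coloring) doing all the work. The base case $k=1$ is immediate: a $2$-edge-coloring uses only two colors, hence contains no rainbow triangle, and is therefore a Gallai coloring by definition. One may start the induction at $k=1$ and ignore the degenerate empty product.

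For the inductive step, assume the statement for $k-1$ factors. Reading $F_1 \otimes \cdots \otimes F_k$ left-associatively, let $H$ be the coloring $F_1 \otimes \cdots \otimes F_{k-1}$; by the induction hypothesis $H$ is a Gallai coloring. The coloring $F_k$ is a $2$-edge-coloring and hence also a Gallai coloring. Applying the previous lemma to the pair $H, F_k$ shows that $H \otimes F_k = F_1 \otimes \cdots \otimes F_k$ is a Gallai coloring, which closes the induction. (If one prefers to peel off the first factor instead, one uses the associativity of $\otimes$, which is routine to check directly from Definition \ref{lexprod}; alternatively one can simply take $F_1 \otimes (F_2 \otimes \cdots \otimes F_k)$ as the meaning of the iterated product.)

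I expect no real obstacle here — the corollary is just an inductive repackaging of the two-factor case. The only point worth a word is that the previous lemma is phrased for Gallai $r$-colorings while the colorings in play may use fewer than $r$ colors; this is harmless, since a Gallai coloring using at most $r$ colors is in particular a Gallai $r$-coloring, and the proof of that lemma used only the absence of rainbow triangles, not the number of colors. As an aside, one could instead avoid invoking the lemma and argue directly: given three vertices of $K_{n_1 \times \cdots \times n_k}$, look at the first coordinate in which not all three agree; either all three differ there, in which case the triangle is monochromatic or two-colored because $F_1$ (say) is, or exactly two agree there, in which case two of the three edges receive the same color from that factor — in both cases the triangle is not rainbow. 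But reusing the previous lemma is cleaner, so that is the route I would take.
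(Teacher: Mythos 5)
Your proof is correct and matches the paper's argument: the paper also deduces the corollary by induction from the two-factor lemma, using the observation that every $2$-edge-coloring is a Gallai coloring. Your extra remarks (associativity, the direct first-differing-coordinate argument) are fine but not needed.
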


\section{Simple bounds for three colors}
\label{sectsimple3}
In this section we will demonstrate simple upper and lower bounds in the case $r=3$ and $s=2$. We first apply the techniques of the previous section to demonstrate a Gallai $3$-coloring with no large $2$-colored vertex set.

\begin{theorem}
There is a Gallai $3$-coloring on $m$ vertices so that for every two colors $S \in {R \choose 2}$, every vertex set $Z$ using colors from $S$ satisfies $\size{Z} \leq (4/9+o(1))m^{1/3}\log^2m$.
\end{theorem}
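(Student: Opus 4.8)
The plan is to construct the Gallai $3$-coloring explicitly as a lexicographic product of three $2$-edge-colorings, one for each pair of colors, and then use the multiplicativity lemma for $\g{\cdot}{S}$ together with Erd\H{o}s' lower bound on Ramsey numbers to control the size of the largest $2$-colored set. Write $R=\{1,2,3\}$, so the three pairs of colors are $S_{12}=\{1,2\}$, $S_{13}=\{1,3\}$, $S_{23}=\{2,3\}$. For each pair $S_{ij}$, by Erd\H{o}s' bound $r(t)\ge 2^{t/2}$ there is a $2$-edge-coloring $F_{ij}$ of $K_{q}$ using only the colors in $S_{ij}$ with no monochromatic clique of order $t$, where we may take $q = \lfloor 2^{t/2}\rfloor$, i.e. $t = (2+o(1))\log q$. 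Set $F = F_{12}\otimes F_{13}\otimes F_{23}$ on $m = q^3$ vertices; by Corollary \ref{isgallaicoloring} this is a Gallai $3$-coloring.

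Next I would compute $\g{F}{S}$ for each $S\in\binom{R}{2}$ using the multiplicativity lemma, $\g{F_1\otimes F_2}{S} = \g{F_1}{S}\cdot\g{F_2}{S}$, applied twice. The key observation is: if $S$ is one of the pairs $S_{ij}$, then $\g{F_{ij}}{S} = q$ (the whole vertex set uses only colors from $S_{ij}$), while for the other two factors $F_{ik}$ and $F_{jk}$ (sharing exactly one color with $S$), a subchromatic set must be monochromatic in that shared color, hence has size at most $t-1$. Thus for each pair $S$,
$$\g{F}{S} = q\cdot(t-1)\cdot(t-1) \le q\, t^2 = (1+o(1))\, q\, (2\log q)^2 = (4+o(1))\, m^{1/3}\log^2 m,$$
using $q = m^{1/3}$ and $\log q = \tfrac13\log m$, which gives $(2\log q)^2 = \tfrac49(\log m)^2$ and hence the bound $(4/9+o(1))m^{1/3}\log^2 m$.

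The routine points to nail down are: (i) verifying that in a factor $F_{ik}$ sharing a single color $c$ with $S$, an $S$-subchromatic set is exactly a monochromatic-in-$c$ set, so its size is bounded by the off-diagonal–free Ramsey bound $t-1$ rather than something larger — this is immediate since the only color of $F_{ik}$ lying in $S$ is $c$; (ii) choosing the relation between $t$ and $q$ so that $2^{t/2}-1 \ge q$ exactly, handling floors, and tracking that the error terms are genuinely $o(1)$ as $m\to\infty$; and (iii) noting $\g{F_{ij}}{S_{ij}} = q$ trivially. I expect no serious obstacle here — the main (very mild) subtlety is just being careful that the bound $r(t)\ge 2^{t/2}$ can be applied for each of the three color pairs independently and that the three factors have a common vertex count $q$, so the product has exactly $m=q^3$ vertices; the constant $4/9$ then falls out of $(2\log(m^{1/3}))^2 = \tfrac49\log^2 m$.
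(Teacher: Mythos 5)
This is essentially the paper's own proof: a lexicographic product of three $2$-colorings of $K_q$, one for each pair of colors, each with no monochromatic clique of order roughly $2\log q$, combined with the multiplicativity of $\g{F}{S}$ under products and Corollary \ref{isgallaicoloring}, yielding $q\cdot O(\log q)^2$ for every pair $S$. Two small points to tidy: as written your construction only produces colorings on $m=q^3$ vertices for special $q$, so for general $m$ one should take $q=\lceil m^{1/3}\rceil$, build the product on $q^3\ge m$ vertices, and restrict to any $m$ vertices (subchromatic sets only shrink, as the paper does), and the displayed quantity $(4+o(1))\,m^{1/3}\log^2 m$ should read $(4+o(1))\,q\log^2 q$, which is exactly where your stated bound $(4/9+o(1))\,m^{1/3}\log^2 m$ comes from.
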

\begin{proof}
Take $t = \lceil m^{1/3}\rceil$; then $t^3$ is at least $m$. For every pair of colors $P \in {R \choose 2}$, take $F_P$ to be a $2$-coloring of $E(K_t)$ using colors from $P$ so that the largest monochromatic clique has size at most $2\log t$. Such a coloring exists by the lower bound on Ramsey numbers proved by Erd\H{o}s and Szekeres in \cite{ES35}. We define $F$ a coloring on $t^3$ vertices by taking $F=F_{\{R_1,R_2\}} \otimes F_{\{R_2,R_3\}} \otimes F_{\{R_1,R_3\}}$ where $R_1,R_2,R_3$ are such that $R=\{R_1,R_2,R_3\}$. This is a Gallai coloring by Corollary \ref{isgallaicoloring}. Fixing any set $S$ of two colors, two of the above three colorings have $S$-subchromatic sets of size at most $2\log t$, and the remaining one has size $t$, so the size of the largest $S$-subchromatic set in $F$ is at most $t(2\log t)^2$. Since $S$ is arbitrary, the size of the largest $S$-subchromatic set for any $S \in {R \choose 2}$ is at most $t(2\log t)^2$.

Restricting $F$ to any $m$ vertices will be a $3$-Gallai coloring with no subchromatic set of size larger than $t(2\log t)^2$. Note that since $t = \lceil m^{1/3}\rceil$, we have $t = (1+o(1))m^{1/3}$, so
$$t(2\log t)^2 = (1+o(1))m^{1/3}(2\log(m^{1/3}))^2 = (4/9+o(1))m^{1/3}\log^2 m$$
\end{proof}

We now proceed to prove that any Gallai $3$-coloring on $m$ vertices contains a subchromatic set on two colors of size at least $m^{1/3}$. Indeed, the next theorem is a strengthening of this statement, as it states that the geometric average over $S \in {R \choose 2}$ of $\g{F}{S}$ must be at least $m^{1/3}$.

Since we have three colors, will refer to them as red, blue, and yellow. 

\begin{theorem}
\label{simplelowerr=3s=2}
For any Gallai $3$-coloring $F$ on $m$ vertices, $\prod_{S \in {R \choose 2}} \g{F}{S} \geq m.$
\end{theorem}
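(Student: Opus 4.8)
The plan is to induct on $m$ using Gallai's structure theorem (Lemma~\ref{lemmagallai}). The base case $m=1$ is trivial since $\g{F}{S}\geq 1$ for every $S$, so the product is at least $1=m$. For the inductive step, apply Lemma~\ref{lemmagallai} to obtain a partition of the vertex set into parts $V_1,\ldots,V_t$ with $t>1$ such that at most two colors (say blue and yellow, leaving red as the ``missing'' color) appear between the parts, each pair of parts is monochromatic between them, and each $F|_{V_i}$ is a Gallai $3$-coloring. Write $m_i=|V_i|$, so $\sum_i m_i=m$. Let $\mathcal{H}$ be the ``reduced'' $2$-coloring of $K_t$ on vertex set $\{1,\ldots,t\}$ where the edge $\{i,j\}$ gets the (blue-or-yellow) color used between $V_i$ and $V_j$.

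The key observation is how subchromatic sets behave relative to this partition. For the color set $S=\{\text{blue},\text{yellow}\}$ (the colors between the parts), the whole vertex set is $S$-subchromatic, so $\g{F}{\{b,y\}}=m$. For $S=\{\text{red},\text{blue}\}$: a subchromatic set $Z$ must have the property that its projection $U\subseteq\{1,\ldots,t\}$ uses only blue edges in $\mathcal{H}$ (since any yellow reduced-edge would put yellow inside $Z$), i.e.\ $U$ is a blue clique in $\mathcal{H}$; and within each chosen part $V_i$, the slice $Z\cap V_i$ is $\{\text{red},\text{blue}\}$-subchromatic in $F|_{V_i}$. Conversely any such choice works. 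Hence
$$
\g{F}{\{r,b\}} \;=\; \max_{U \text{ blue clique in }\mathcal{H}} \; \sum_{i\in U}\g{F|_{V_i}}{\{r,b\}},
$$
and symmetrically $\g{F}{\{r,y\}}=\max_{U \text{ yellow clique in }\mathcal{H}}\sum_{i\in U}\g{F|_{V_i}}{\{r,y\}}$. Now I would bound each max below by an \emph{average}: for a blue clique $U$, $\sum_{i\in U}\g{F|_{V_i}}{\{r,b\}} \geq \sum_{i\in U}\g{F|_{V_i}}{\{r,b\}}$ trivially, but the useful step is to weight by $m_i$. Specifically, set $a_i=\g{F|_{V_i}}{\{r,b\}}/m_i$ and $c_i=\g{F|_{V_i}}{\{r,y\}}/m_i$; by the induction hypothesis applied to $F|_{V_i}$ together with $\g{F|_{V_i}}{\{b,y\}}\leq m_i$ (it is after all a set of $m_i$ vertices using $3$ colors... actually one only needs $\g{F|_{V_i}}{\{b,y\}} \le m_i$, which is immediate), we get $a_i c_i m_i^2 \cdot \g{F|_{V_i}}{\{b,y\}} \ge m_i$, hence $a_i c_i \geq 1/(m_i\cdot\g{F|_{V_i}}{\{b,y\}}) \geq 1/m_i^2$, i.e.\ $a_ic_i\geq 1/m_i^2$... this needs care. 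Let me instead directly use the hypothesis in the form $\g{F|_{V_i}}{\{r,b\}}\cdot\g{F|_{V_i}}{\{r,y\}}\cdot\g{F|_{V_i}}{\{b,y\}}\geq m_i$, and combine with the reduced-coloring picture.

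The cleanest route: because $\mathcal{H}$ is a $2$-coloring of $K_t$, for \emph{any} vertex weights $w_i>0$ the heavier of ``max blue-clique weight'' and ``max yellow-clique weight'' is large — in fact by a greedy/averaging argument there is a blue clique $U_b$ and a yellow clique $U_y$ with $\big(\sum_{i\in U_b} x_i\big)\big(\sum_{i\in U_y} y_i\big)\geq \sum_i x_i y_i$ for nonnegative $x_i,y_i$ — but the simplest sufficient statement is: there is a partition-respecting choice giving $\g{F}{\{r,b\}}\cdot\g{F}{\{r,y\}} \geq \sum_i \g{F|_{V_i}}{\{r,b\}}\cdot\g{F|_{V_i}}{\{r,y\}}$, which follows because one can take $U_b$ to be a single vertex $i$ maximizing $\g{F|_{V_i}}{\{r,b\}}$ — no, that loses the sum. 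The honest statement I want is the discrepancy-type fact: for a $2$-edge-colored $K_t$ and nonnegative vertex values $p_i,q_i$, $\max_{U\text{ blue clique}}\sum_{U}p_i \cdot \max_{U\text{ yellow clique}}\sum_U q_i \geq \sum_i p_i q_i$ is \emph{false} in general, so instead I will use that the larger color class of edges at each vertex, summed appropriately, yields $\max_U \sum_U p_i \geq$ (weighted bound), and push the $m_i$-weights through Cauchy–Schwarz. Then multiplying the three quantities $\g{F}{\{b,y\}}=m$, $\g{F}{\{r,b\}}$, $\g{F}{\{r,y\}}$ and invoking $\sum_i(\text{stuff})\geq(\sum m_i)=m$ via the induction hypothesis on each part closes the bound.

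\textbf{The main obstacle} I anticipate is exactly this last combinatorial step: extracting, from the $2$-coloring $\mathcal{H}$ on $K_t$, a single blue clique and a single yellow clique whose $\g{}{}$-sums multiply to at least $\sum_i \g{F|_{V_i}}{\{r,b\}}\,\g{F|_{V_i}}{\{r,y\}} / (\text{something} \le 1)$. The resolution is to not demand cliques in $\mathcal{H}$ separately optimize, but to observe that the vertex set $\{1,\dots,t\}$ itself is ``$2$-colored'' and apply the $r=3$, $s=2$ statement \emph{recursively to $\mathcal{H}$ viewed as a (degenerate) Gallai $3$-coloring} — but $\mathcal{H}$ uses only $2$ colors so the missing-red set is all of $\{1,\dots,t\}$, giving no leverage. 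Hence the real engine must be a direct greedy argument on $\mathcal{H}$: order the parts and peel off vertices whose reduced edges to the remaining set are predominantly blue (resp.\ yellow), a standard argument showing some blue clique captures at least the ``blue-weighted'' portion. I expect the bookkeeping here — making the weights $\g{F|_{V_i}}{\{r,b\}}$ and $\g{F|_{V_i}}{\{r,y\}}$ interact correctly with the clique extraction — to be where all the actual work lies, with everything else being routine application of Lemma~\ref{lemmagallai} and the inductive hypothesis.
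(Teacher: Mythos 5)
There is a genuine gap, and it starts with a false claim: you assert that for $S=\{\text{blue},\text{yellow}\}$ (the two colors appearing between the parts) the whole vertex set is $S$-subchromatic, so $\g{F}{\{b,y\}}=m$. That is wrong in general, since red edges may occur \emph{inside} the parts $V_i$; Lemma~\ref{lemmagallai} only restricts the colors between distinct parts. The correct statement is $\g{F}{\{b,y\}}\geq \sum_i \g{F|_{V_i}}{\{b,y\}}$ (take the largest blue-yellow set inside each part and combine them; the cross edges are blue or yellow by assumption). Note also that if your claim $\g{F}{\{b,y\}}=m$ were true, the theorem would be immediate ($m\cdot 1\cdot 1\geq m$), which is a sign something is off. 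Beyond this, the proposal never actually closes the induction: you correctly characterize $\g{F}{\{r,b\}}$ via blue cliques of the reduced coloring $\mathcal{H}$, but then search for a product-of-clique-weights inequality over $\mathcal{H}$, note that the natural candidates are false, and end by declaring that ``all the actual work'' lies in a clique-extraction step you do not carry out. So as written this is a plan with the decisive step missing.

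The point you are missing is that no clique extraction in $\mathcal{H}$ is needed for this bound. With $g_i=\g{F|_{V_i}}{\{b,y\}}$, $o_i=\g{F|_{V_i}}{\{r,y\}}$, $p_i=\g{F|_{V_i}}{\{r,b\}}$ and $g,o,p$ the corresponding quantities for $F$, one only needs $g\geq\sum_i g_i$ together with the trivial monotonicity $o\geq o_i$ and $p\geq p_i$ for \emph{each} $i$ separately (each $V_i$ is a vertex subset, so any subchromatic set inside it is one for $F$). Then
\begin{equation*}
g\,o\,p \;\geq\; \sum_i g_i\,o\,p \;\geq\; \sum_i g_i\,o_i\,p_i \;\geq\; \sum_i m_i \;=\; m,
\end{equation*}
where the last inequality is the induction hypothesis applied to each $F|_{V_i}$. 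This is exactly the paper's proof. The weighted clique-extraction machinery you were reaching for (picking large blue and yellow cliques in $\mathcal{H}$ weighted by the $o_i,p_i$) is precisely what the paper develops later, via the weighted Ramsey theorem, to upgrade $m^{1/3}$ to $m^{1/3}\log^2 m$; it is not needed, and should not be needed, for the bare bound $\prod_S \g{F}{S}\geq m$.
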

\begin{proof}
We proceed by induction on $m$ to prove the theorem.

Define $g$ to be the size of the largest subchromatic set using only the colors blue and yellow, $o$ to be the size of the largest subchromatic set using only the colors red and yellow, and $p$ to be the size of the largest subchromatic set using only the colors red and blue. (A note on nomenclature: $g$ stands for ``green," as blue and yellow form green when mixed. Similarly, $o$ stands for ``orange" and $p$ for ``purple.") We wish to show that $gop \geq n$.

If $m=1$, then $g=o=p=1$ and $gop=m$.

Otherwise, $m > 1$ and by the structure theorem for Gallai colorings there is a non-trivial partition of the vertex set into parts $V_1,\ldots,V_t$ and a pair of colors $Q \in {R \choose 2}$ satisfying that for any distinct $i,j \in [t]$ there is a $q \in Q$ so that every edge between $V_i$ and $V_j$ has color $q$. Take $m_i$ to be the size of $V_i$ Take $g_i$ to be the size of the largest set using only the colors blue and yellow from $V_i$, $o_i$ to be the size of the largest set using only the colors red and yellow from $V_i$, and $p_i$ to be the size of the largest set using only the colors red and blue from $V_i$. Without loss of generality we assume that $Q$ contains colors blue and yellow.

We have $g = \sum_i g_i$. Indeed, we may combine all the largest sets using colors blue and yellow from each $V_i$ to obtain a set of size $\sum_i g_i$ that only uses blue and yellow. 

Furthermore, $o \geq \max_i o_i$ and $p \geq \max_i p_i$. This gives:
$$gop = \sum_i g_iop \geq \sum_i g_io_ip_i \geq \sum_i m_i = m,$$
where the last inequality follows by the induction hypothesis applied to $F$ restricted to $V_i$.
\end{proof}

Note that we use $o \geq \max_i o_i, p \geq \max_i p_i$. It is on these inequalities that we will in the next sections gain multiple factors of $\log m$; if, for example, we find some set $U \subseteq [t]$ satisfying that for each distinct $i,j \in U$ the edges between $V_i,V_j$ are all yellow, then $o \geq \sum_{i \in U} o_i$. If it were the case that the $o_i,p_i$ were all pairwise equal, then we would get by the Erd\H{o}s-Szekeres bound for Ramsey numbers that $op = \Omega(\log^2 t \max_i o_ip_i)$; this motivates the approach in the next two sections, where we handle the general case in which it may not be true that the $o_i,p_i$ are all pairwise equal.

\section{A weighted Ramsey's theorem}
\label{sectweightrams}
In this section we will prove a version of Ramsey's theorem that will apply to graphs in which the weight of a vertex may depend on the color of the clique that contains the vertex. The next lemma is a convenient statement of a quantitative bound on the classical Ramsey's Theorem.

\begin{lemma}\label{RamseyTheorem}
In every $2$-coloring of the edges of $K_t$, for some $k$ and $\ell$ there is a red clique of order $k$ and a blue clique of order $\ell$ with $k \ell \geq \frac{1}{4}\log^2t$.
\end{lemma}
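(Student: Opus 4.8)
The plan is to deduce this directly from the Erdős–Szekeres bound on Ramsey numbers, which asserts that every $2$-coloring of $K_t$ contains a monochromatic clique of order $m$ whenever $t \geq \binom{2m-2}{m-1}$, and in particular (using $\binom{2m-2}{m-1} \leq 2^{2m-2} \leq 4^m$) a monochromatic clique of order at least $\frac14 \log t$. So first I would fix a $2$-coloring of $E(K_t)$, apply this bound to obtain a monochromatic clique $C$ of order $m \geq \frac14 \log t$, and observe that $C$ is either red or blue.

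Next I would handle the two cases symmetrically. Say $C$ is red. Then taking $k = |C| \geq \frac14\log t$ and $\ell = 1$ (any single vertex is a blue clique, trivially), we get $k\ell \geq \frac14 \log t$. The issue is that this only gives $k\ell = \Omega(\log t)$, not $\Omega(\log^2 t)$, so this crude approach is not enough — the main obstacle is squeezing out the second logarithmic factor. To get $k\ell \geq \frac14 \log^2 t$, the right idea is to iterate: inside the red clique $C$ we have no further structure to exploit, so instead I would apply Ramsey's theorem a second time to the part of the graph \emph{outside} relevant cliques, or more cleanly, use the standard fact that one can find a red clique of order $k$ and a blue clique of order $\ell$ with $\binom{k+\ell}{k} > t$, which is exactly the Erdős–Szekeres off-diagonal statement proved by the usual neighborhood-partition induction. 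From $\binom{k+\ell}{k} > t$ one gets $2^{k+\ell} > t$, hence $k + \ell > \log t$; but more is true — using $\binom{k+\ell}{k} \leq \left(\frac{e(k+\ell)}{\min(k,\ell)}\right)^{\min(k,\ell)}$ type bounds, or by a direct argument, one shows $k\ell = \Omega(\log^2 t)$ with constant $\frac14$ achievable by a careful accounting.

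Concretely, the cleanest route I would take is: prove by induction on $t$ the off-diagonal Erdős–Szekeres statement that every $2$-coloring of $K_t$ has a red clique of order $k$ and a blue clique of order $\ell$ with $\binom{k+\ell}{k} \geq t$ (equivalently $k+\ell \geq$ the largest $n$ with $\binom{n}{\lfloor n/2\rfloor}\leq t$, roughly), and then reduce to an elementary inequality: among all pairs $(k,\ell)$ of positive integers with $\binom{k+\ell}{k}\geq t$, the product $k\ell$ is minimized essentially when one of them is as small as possible, but since we get to \emph{choose} the witnessing clique sizes from what the induction produces, and the induction actually produces \emph{both} a large red and a large blue clique simultaneously when the coloring is balanced, the genuinely extremal colorings (like a red clique on half the vertices joined to a blue clique on the other half) still give $k = \ell = \Theta(\log t)$ and hence $k\ell = \Theta(\log^2 t)$. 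I would then check the constant: since $\binom{k+\ell}{k} \leq 2^{k+\ell}$ forces $k+\ell \geq \log t$, and for the product to be small one clique must be small, say $\ell$ small, but then $\binom{k+\ell}{k} \leq (k+\ell)^{\ell} \leq (k+\ell+1)^\ell$, giving $\ell \log(k+\ell+1) \geq \log t$; combining with $k \geq k + \ell - \ell$ one verifies $k\ell \geq \frac14\log^2 t$ after a short case analysis. I expect the main work — and the only real obstacle — to be this last bookkeeping to pin down the constant $\frac14$; the existence of cliques with $k\ell = \Omega(\log^2 t)$ for \emph{some} constant is immediate from the standard proof of Ramsey's theorem.
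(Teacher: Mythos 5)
Your overall route is the same as the paper's: let $k$ and $\ell$ be the orders of the largest red and blue cliques, use maximality to get $t < R(k+1,\ell+1) \le {k+\ell \choose k}$ (the off-diagonal Erd\H{o}s--Szekeres bound), and then deduce $k\ell \ge \frac14\log^2 t$ from ${k+\ell \choose k} > t$. The paper treats this last deduction as routine; the problem is that the specific bookkeeping you propose for it does not work. From ${k+\ell\choose k}\le 2^{k+\ell}$ and, with $\ell=\min(k,\ell)$, ${k+\ell\choose k}\le (k+\ell)^{\ell}$, you retain only the two facts $k+\ell\ge\log t$ and $\ell\log(k+\ell+1)\ge\log t$, and these do not imply $k\ell\ge\frac14\log^2 t$ --- they do not even imply $k\ell=\Omega(\log^2 t)$, contrary to your closing remark that this much is immediate. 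Concretely, with $\log t=100$ the pair $(k,\ell)=(84,16)$ satisfies $k+\ell\ge 100$ and $\ell\log(k+\ell+1)\approx 106>100$, yet $k\ell=1344<2500=\frac14\log^2 t$; in general the minimum of $k\ell$ subject to your two constraints is of order $\log^2 t/\log\log t$ (take $k\approx\log t$, $\ell\approx\log t/\log\log t$). Of course such a pair cannot actually occur, since ${100\choose 16}\approx 2^{63}<t$ --- which is exactly the point: the two crude estimates you kept are strictly weaker than the binomial fact, and the loss in the intermediate regime kills not just the constant but the order of magnitude.

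To close the gap you need an upper bound on ${k+\ell\choose k}$ that is sharp in both regimes simultaneously, for instance ${k+\ell\choose k}\le 2^{(k+\ell)H\left(k/(k+\ell)\right)}\le 2^{2\sqrt{k\ell}}$, where $H$ is the binary entropy function and one uses the elementary inequality $H(p)\le 2\sqrt{p(1-p)}$; then $t<{k+\ell\choose k}$ gives $\log t< 2\sqrt{k\ell}$, i.e.\ $k\ell\ge\frac14\log^2 t$, which is presumably the ``routine check'' the paper intends. Two smaller remarks: your opening reduction via the diagonal bound (with $\ell=1$) is correctly discarded, and the cleanest way to obtain the off-diagonal fact is exactly the paper's, namely taking $k,\ell$ to be the maximum red and blue clique orders so that $t<R(k+1,\ell+1)\le{k+\ell\choose k}$, rather than asking the neighborhood-partition induction to produce both cliques ``simultaneously,'' which as described is vague and not what that induction gives.
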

\begin{proof}
Take $k$ to be the order of the largest red clique and $\ell$ to be the order of the largest blue clique. We must have
$$t < R(k+1,\ell+1) \leq {k+\ell \choose k}.$$
It is routine to check that this implies $k \ell \geq \frac{1}{4}\log^2t$.
\end{proof}

For the rest of this paper, let $M:=2^{16}$. The following lemma, which we call the weighted Ramsey's theorem, states that if vertex $i$ contributes weight $\alpha_i$ to any red clique in which it is contained and weight $\beta_i$ to any blue clique in which it is contained, then we may give a lower bound for the product of the sizes of the largest (weighted) red and blue cliques.

\begin{lemma}
Given a $2$-coloring of the edges of a complete graph on $t$ vertices with $t \geq M$ and vertex weights $(\alpha_i,\beta_i)$, take $\gamma_i=\alpha_i\beta_i$ and $\gamma=\min_i \gamma_i$. There is a red clique $S$ and a blue clique $U$ with
$$\left(\sum_{s \in S} \alpha_s\right)\left(\sum_{u \in U} \beta_u\right) \geq \frac{\gamma}{32}\log^2t.$$
\end{lemma}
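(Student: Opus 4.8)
The plan is to reduce the weighted statement to the unweighted Ramsey bound of Lemma~\ref{RamseyTheorem} by a dyadic bucketing of the vertices according to their weights. First I would normalize: by scaling all $\alpha_i$ by a constant $c$ and all $\beta_i$ by $1/c$ (which leaves each $\gamma_i=\alpha_i\beta_i$ unchanged and multiplies the target quantity by nothing), I may assume the weights are arranged conveniently — say that $\min_i \alpha_i = \gamma^{1/2}\cdot(\text{something})$, or more usefully that all $\alpha_i \ge 1$ after discarding the global scaling; the cleanest route is to first handle the case $\gamma_i = \gamma$ for all $i$ and then observe that replacing each $\gamma_i$ by the smaller value $\gamma$ only decreases $\alpha_i$ or $\beta_i$, hence only decreases both candidate sums, so it suffices to prove the bound when $\alpha_i\beta_i = \gamma$ identically. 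After dividing through by $\gamma$ we may further assume $\gamma = 1$, i.e. $\beta_i = 1/\alpha_i$ for every $i$.

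Next I would bucket the vertices by the size of $\alpha_i$. Partition $[t]$ into classes $B_j = \{\, i : 2^{j} \le \alpha_i < 2^{j+1}\,\}$ for $j \in \Z$. Since $t \ge M = 2^{16}$ is large, at least one bucket, say $B_{j_0}$, has $|B_{j_0}| \ge t^{1/2}$ — or more carefully, because a single bucket could already be huge, I should instead argue that the largest bucket $B_{j_0}$ has $|B_{j_0}| \ge$ (some polynomial in $t$ that still leaves $\log |B_{j_0}|$ comparable to $\log t$ up to the constant we can afford, given the generous factor $\tfrac{1}{32}$ versus $\tfrac14$). Concretely, the number of nonempty buckets that matter can be capped: buckets with $\alpha_i$ astronomically large or small contribute, but we only need one good bucket. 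I would take $B_{j_0}$ to be a largest bucket; restrict the $2$-coloring to $B_{j_0}$ and apply Lemma~\ref{RamseyTheorem} to get a red clique $S$ of order $k$ and a blue clique $U$ of order $\ell$ inside $B_{j_0}$ with $k\ell \ge \tfrac14 \log^2 |B_{j_0}|$. Within this bucket every $\alpha_s \ge 2^{j_0}$ and every $\beta_u = 1/\alpha_u > 2^{-(j_0+1)} = \tfrac12 \cdot 2^{-j_0}$, so
$$
\left(\sum_{s\in S}\alpha_s\right)\left(\sum_{u\in U}\beta_u\right) \;\ge\; \bigl(k\,2^{j_0}\bigr)\left(\ell\cdot \tfrac12 2^{-j_0}\right) \;=\; \tfrac12\, k\ell \;\ge\; \tfrac18 \log^2 |B_{j_0}|.
$$
The remaining task is bookkeeping on $\log|B_{j_0}|$ versus $\log t$, which is where the concrete constants (the $32$, the threshold $M = 2^{16}$) are spent.

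The main obstacle — really the only substantive point — is controlling the number of buckets, i.e. ensuring a largest bucket is not too small relative to $t$. If the $\alpha_i$ ranged over an unbounded set of scales there could be many buckets; but in fact I only need that \emph{some} bucket has size at least $t^{1/4}$, say. This is not automatic from pigeonhole over all $j\in\Z$ (infinitely many buckets). The fix: I would first collapse the weights. Since scaling $(\alpha_i,\beta_i)\mapsto(c\alpha_i,\beta_i/c)$ is free, I can assume $\min_i\alpha_i \in [1,2)$. Then any vertex with $\alpha_i \ge t$ can be handled directly — pair it (as a singleton red clique, $k=1$) with a largest blue clique among the remaining low-$\alpha$ vertices — or, more simply, truncate: replace every $\alpha_i > t$ by $\alpha_i = t$ (and correspondingly $\beta_i = 1/t$); this only decreases both sums, so a bound for the truncated instance suffices. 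After truncation all $\alpha_i \in [1,t]$, so there are at most $\log t + 1$ nonempty buckets, and pigeonhole gives a bucket of size $\ge t/(\log t + 1) \ge t^{1/2}$ for $t \ge M = 2^{16}$. Then $\log|B_{j_0}| \ge \tfrac12\log t$, and the displayed inequality yields $\ge \tfrac18\cdot\tfrac14\log^2 t = \tfrac{1}{32}\log^2 t$, completing the proof. I should double-check that the reduction to $\gamma_i\equiv\gamma$ and the truncation step compose correctly with the normalization, but each is an elementary monotonicity observation.
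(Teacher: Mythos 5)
Your overall scheme --- first reduce to the case $\alpha_i\beta_i=\gamma$ for every $i$, rescale so $\gamma=1$ and $\min_i\alpha_i\in[1,2)$, bucket the vertices dyadically by $\alpha_i$, pigeonhole a large bucket, and apply Lemma~\ref{RamseyTheorem} inside it --- is sound, and apart from bookkeeping it is the paper's argument (the paper dyadically partitions the pairs $(\alpha_i,\beta_i)$ directly instead of first forcing $\alpha_i\beta_i=\gamma$). The step that fails as written is the truncation you use to cap the number of buckets: replacing $\alpha_i>t$ by $t$ and ``correspondingly'' $\beta_i$ by $1/t$ does \emph{not} ``only decrease both sums'' --- it \emph{increases} $\beta_i$ from $1/\alpha_i<1/t$ up to $1/t$. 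Hence a red/blue clique pair certified in the truncated instance need not satisfy the required inequality in the original one: nothing prevents the pigeonholed bucket, and with it the blue clique $U$, from consisting entirely of truncated vertices, whose true $\beta$-weights can be arbitrarily smaller than the truncated values. So this reduction is not an ``elementary monotonicity observation,'' and the proof has a genuine hole exactly at the point you flagged for double-checking.

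The hole is repairable inside your own framework, and the repair is precisely the paper's other case. After your normalizations, if some $\alpha_v\ge t$, then the two singletons $S=\{v\}$ and $U=\{w\}$, where $w$ is a vertex of minimum $\alpha$ (so $\beta_w>1/2$), already give $\left(\sum_{s\in S}\alpha_s\right)\left(\sum_{u\in U}\beta_u\right)\ge t/2\ge\tfrac{1}{32}\log^2 t$; your parenthetical ``pair it with a largest blue clique among the low-$\alpha$ vertices'' is the right instinct, but you never verify that such a clique has weight bounded below, which is exactly what the minimum-$\alpha$ singleton supplies. With that case dispatched, all remaining $\alpha_i$ lie in $[1,t)$, there are at most $\log t+1$ buckets, and your computation ($\tfrac12 k\ell\ge\tfrac18\log^2|B_{j_0}|\ge\tfrac{1}{32}\log^2 t$) goes through. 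For comparison, the paper avoids any truncation by the dichotomy: either $\max_i\alpha_i\cdot\max_j\beta_j\ge\frac{\gamma}{32}\log^2 t$ (two singletons suffice), or else all weights range over a multiplicative factor less than $\tfrac{1}{32}\log^2 t$, so there are only $O(\log\log t)$ dyadic classes in each coordinate and the pigeonholed class has size at least $t^{3/4}$, compared with your $t^{1/2}$; both suffice for the constant $\tfrac{1}{32}$.
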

\begin{proof}
The proof will dyadically partition the vertices based on their pair of weights $(\alpha_i,\beta_i)$, and then apply the classical Erd\H{o}s-Szekeres bound on Ramsey numbers in the form of the previous lemma. That is, we will find a large set of vertices $A$ so that any two vertices in $A$ have similar values for $\alpha_i$ and $\beta_i$. Applying Lemma \ref{RamseyTheorem} to this set will give the desired result.

Take $\alpha=\max_i \alpha_i$ and $\beta=\max_i \beta_i$.

If $\alpha\beta \geq \frac{\gamma}{32}\log^2t$ we may take $S = \{i\}$ with $\alpha_i=\alpha$ and $U = \{j\}$ with $\beta_j=\beta$.
Otherwise, $\alpha \beta/\gamma < \frac{1}{32}\log^2t$. Observe that for each $i$ we have $\alpha_i \leq \alpha$, $\beta_i \leq \beta$, and $\alpha_i \beta_i \geq \gamma$.

This gives $\gamma/\beta \leq \alpha_i \leq \alpha$ and $\gamma/\alpha \leq \beta_i \leq \beta$. Note we may partition $[\gamma/\beta,\alpha]$ into $m_1\leq \log(\alpha\beta/\gamma)+1$ intervals $I_1,\ldots,I_{m_1}$ such that, within any interval $I_i$, we have $\sup(I_i)/\inf(I_i) \leq 2$. Similarly, we may partition $[\gamma/\alpha,\beta]$ into $m_2 \leq \log(\alpha\beta/\gamma)+1$ intervals $I_1',\ldots,I_{m_2}'$ with $\sup(I_i')/\inf(I_i') \leq 2$. By the pigeonhole principle there must be some pair $(j,j')$ such that, taking $A:=\{i : \alpha_i \in I_j, \beta_i \in I_{j'}'\}$, we have $\size{A} \geq t/(m_1m_2)$.

Applying the previous lemma to $A$, we get that there is a red clique $S$ of size $k$ and a blue clique $U$ of size $\ell$ with $k \ell \geq \frac{1}{4}\log^2 (t/(m_1m_2))$.

Note since $t \geq M$ we get $m_1m_2 \leq (\log(\frac{1}{32}\log^2t)+1)^2  = \log^2(\frac{1}{16}\log^2t) \leq t^{1/4}$. Therefore, we get
$$\frac{1}{4}\log^2(t/(m_1m_2)) \geq \frac{1}{4}\log^2(t^{3/4}) \geq \frac{1}{8}\log^2t.$$

Take $\alpha_A = \min_{i \in A} \alpha_i$ and $\beta_A = \min_{i \in A} \beta_i$. For any $i \in A$, $\alpha_i \in I_j$ and hence $\alpha_A \geq \alpha_i/2$. Similarly, for any $i \in A$ we have $\beta_A \geq \beta_i/2$. Therefore, fixing any $i \in A$, we get $\alpha_A\beta_A \geq \frac{\alpha_i}{2}\frac{\beta_i}{2} \geq \gamma/4$. Therefore,

$$\left(\sum_{s \in S} \alpha_s\right)\left(\sum_{u \in U} \beta_u\right) \geq \left(\sum_{s \in S} \alpha_A\right)\left(\sum_{u \in U} \beta_A\right) = k \alpha_A \ell \beta_A \geq k \ell \gamma/4 \geq \frac{\gamma}{32}\log^2t.$$
\end{proof}

Since in the statement of the weighted Ramsey's theorem we take $\gamma = \min_i \alpha_i\beta_i$, it provides good bounds when $\alpha_i\beta_i$ does not vary much between the vertices. Therefore, when we wish to use it in the upcoming sections, we will first dyadically partition the vertices based on $\alpha_i\beta_i$ and then apply the lemma to each partition.

Note that we chose $\gamma = \min_i \alpha_i\beta_i$. We may hope to be able to use other functions of $\alpha_i,\beta_i$ in this expression. However, it is not as robust as one may hope. In particular, we want to observe that the function $\alpha_i + \beta_i$ will not yield an analogous theorem, as if we have many vertices of weight $(0,1)$ and color all of the edges red, then the largest red clique has size $0$ and the largest blue clique has size $1$, but for each $i$ we have $\alpha_i+\beta_i = 1$. Fortunately, using $\alpha_i\beta_i$ will suffice for our purposes.

\section{Tight lower bound for three colors}\label{tightfor3}

In this section we will show that any Gallai $3$-coloring on $m$ vertices has a $2$-colored set of size $\Omega(m^{1/3}\log^2m)$. This matches the upper bound up to a constant factor.

We will refer to the three edge colors as red, blue, and yellow.

For the rest of this section, fix an integer $m \in \N$. We remark that in this section there is an inductive argument for which it is important to note that $m$ remains fixed throughout.

Let 

$$f(n):=
\left\{
\begin{array}{lll}
c\log^2(Cn) & \mbox{if } 0 < n \leq m^{4/9}\\
c^2\log^2(m^{4/9})\log^2(Cnm^{-4/9}) & \mbox{if } m^{4/9} < n \leq m^{8/9}\\
c^3\log^4(m^{4/9})\log^2(Cnm^{-8/9}) & \mbox{if } m^{8/9} < n \leq m,
\end{array} \right., $$
where $D=2^{2048}$, $C=2^{D^8},$ and $c=\log^{-2}(C^2) = D^{-16}/4$. We will have a further discussion about $f$ and its properties shortly. For now, simply note that $f(m) = \Omega(\log^6m)$.

We will prove the following theorem.
\begin{theorem}\label{theoremr=3s=2}
For any $n \in [m]$, a Gallai coloring $F$ on $n$ vertices has either $\max_S \g{F}{S} \geq m^{7/18}/8$ or $\prod_S \g{F}{S} \geq n f(n)$.
\end{theorem}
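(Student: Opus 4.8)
\medskip
\noindent\textbf{Proof approach.}\quad The plan is to induct on $n$, keeping the integer $m$ fixed throughout (this is why the statement quantifies over all $n\in[m]$ while the two target quantities refer to $m$). Write $g,o,p$ for the three numbers $\g{F}{S}$, $S\in{R\choose 2}$, as in the proof of Theorem~\ref{simplelowerr=3s=2}; we must show $\max(g,o,p)\ge m^{7/18}/8$ or $gop\ge nf(n)$. First I dispose of small $m$: if $m\le C^{9/4}$ then $m^{4/9}\le C$, and a direct computation from the definition of $f$ using $c=\log^{-2}(C^2)$ shows $f(n)\le 1$ for every $n\le m$ in each of the three regimes, so $gop\ge n\ge nf(n)$ is exactly Theorem~\ref{simplelowerr=3s=2}. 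So assume $m>C^{9/4}$; the same computation gives $f(n)\le1$ for all $n\le C$ (such $n$ lie in the first regime since $C<m^{4/9}$), and Theorem~\ref{simplelowerr=3s=2} settles the base case $n\le C$. Thus the inductive step treats $n>C$ (in particular $n\ge M=2^{16}$, the size threshold needed to apply the weighted Ramsey theorem of Section~\ref{sectweightrams}).

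For the inductive step I apply Gallai's structure theorem (Lemma~\ref{lemmagallai}) to obtain a nontrivial partition $V=V_1\cup\dots\cup V_t$ and a pair $Q$ of colors, which we may take to be $\{\text{blue},\text{yellow}\}$, such that the reduced coloring $\chi$ on $[t]$ — assigning to $ij$ the unique color between $V_i$ and $V_j$ — is a $2$-coloring using only colors of $Q$. Let $m_i=|V_i|$ and let $g_i,o_i,p_i$ be the three two-colored quantities inside $V_i$. If $\max(g_i,o_i,p_i)\ge m^{7/18}/8$ for some $i$, the witnessing set is $S$-subchromatic in $F$ and we are done; otherwise $g_io_ip_i\ge m_if(m_i)$ for every $i$ by the inductive hypothesis (and $g,o,p<m^{7/18}/8$, else done).

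The recombination is the one from the proof of Theorem~\ref{simplelowerr=3s=2}, but I retain the gains discarded there: $g=\sum_{i=1}^t g_i$ always (blue and yellow lie in $Q$); $o\ge\sum_{i\in U}o_i$ for any clique $U$ of $\chi$ with all edges yellow (yellow lies in $\{\text{red},\text{yellow}\}$); and $p\ge\sum_{i\in W}p_i$ for any blue clique $W$ of $\chi$. To produce good $U,W$ I first pass to a subfamily $T'$ of parts on which $o_ip_i$ is constant up to a factor $2$ (there are only $O(\log m)$ dyadic ranges, since $1\le o_ip_i<(m^{7/18}/8)^2$), and apply the weighted Ramsey theorem to $\chi$ restricted to $T'$ with weights $(\alpha_i,\beta_i)=(o_i,p_i)$. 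This gives a yellow clique $U$ and a blue clique $W$ in $T'$ with $\big(\sum_{i\in U}o_i\big)\big(\sum_{i\in W}p_i\big)=\Omega\big(\gamma\log^2|T'|\big)$, $\gamma:=\min_{i\in T'}o_ip_i$, so $op=\Omega(\gamma\log^2|T'|)$, while $g\ge\sum_{i\in T'}g_i\ge\sum_{i\in T'}\tfrac{m_if(m_i)}{o_ip_i}=\Omega\big(\gamma^{-1}\sum_{i\in T'}m_if(m_i)\big)$. Multiplying, $\gamma$ cancels and the task becomes: show $\log^2|T'|\cdot\sum_{i\in T'}m_if(m_i)$ beats a fixed polylogarithmic multiple of $nf(n)$ when $T'$ is chosen as the dyadic size-class of parts holding the largest share of the $n$ vertices. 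The three-regime form of $f$, the breakpoints $m^{4/9}$, $m^{8/9}$, and the tower-scale constants $D,C,c$ are arranged precisely so that one weighted-Ramsey step banks the correct fraction of a $\log^2$ factor here.

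The main obstacle is that this gain is genuine only when the partition is spread out. If one part $V_{i^*}$ carries almost all $n$ vertices, $\log^2|T'|$ is worthless; instead one recurses into $V_{i^*}$ (so $g_{i^*}o_{i^*}p_{i^*}\ge m_{i^*}f(m_{i^*})$) and must recover the deficit $\tfrac{nf(n)}{m_{i^*}f(m_{i^*})}-1$ from a small multiplicative boost — each remaining part adds at least $1$ to $g$, and the yellow (resp.\ blue) neighbours of $i^*$ in $\chi$ add their $o_j$ (resp.\ $p_j$) to $o$ (resp.\ $p$). That this suffices rests on two facts in tension: $f$ grows extremely slowly — within a regime $f(n)/f(n-1)=1+O\!\big(\tfrac1{n\log(Cn)}\big)$, and the two regime jumps are absorbed by the downward jumps built into $f$ at $m^{4/9}$, $m^{8/9}$ — whereas the escape threshold forces $g_{i^*},o_{i^*},p_{i^*}<m^{7/18}/8$, making each boost at least $1+\Omega(m^{-7/18})$, which dominates the per-step growth of $nf(n)$. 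Intermediate configurations (a bounded number of medium-sized parts, too few for weighted Ramsey to help) are handled by the bare recombination over a monochromatic clique of $\chi$, as in the $t=2$ case; to guarantee such a clique collects a constant fraction of the orange and purple weight I would use a discrepancy inequality for edge-weighted complete graphs. Arranging that the ``spread'' regime (won by weighted Ramsey), the ``concentrated'' regime (won by recursion with a tiny boost), and all intermediate cases close simultaneously is the technical heart of the argument, and this simultaneity is exactly what pins down the exponents $4/9$, $8/9$, $7/18$ and the constants $c,C,D$.
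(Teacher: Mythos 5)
Your top-level architecture matches the paper's: induction on $n$ with $m$ held fixed, base cases $n<C$ disposed of by Theorem~\ref{simplelowerr=3s=2} since $f(n)\le 1$ there, the escape clause $m^{7/18}/8$, Gallai's structure theorem, and the weighted Ramsey theorem applied to $\chi$ with weights $(o_i,p_i)$. But the mechanism you describe in each of your three regimes has a genuine gap. In the spread regime you pass to a \emph{single} dyadic class $T'$ (``holding the largest share'') and aim to beat ``a fixed polylogarithmic multiple of $nf(n)$''. That loss is not affordable: the calibration of $f$ (fact 3 of Lemma~\ref{factsaboutf}) matches the weighted-Ramsey gain $\tfrac1{32}\log^2(\cdot)$ against the growth $f(n)/f(n_j)$ with only a constant amount of slack (a constant depending on $c,C,D$), whereas discarding all but one of the $\Theta(\log m)$ dyadic classes costs a factor that grows with $m$; if the parts have size about $nm^{-7/18}$ and the vertex mass is spread over $\Theta(\log m)$ classes, your bound tops out at $O\big(\log^2 t\cdot \tfrac{n}{\log m}f(nm^{-7/18})\big)$, which falls below $nf(n)$ once $\log m \gg 1/c$. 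The paper never pays this factor: since $g=\sum_j g_j$, it writes $gop\ge\sum_j g_j\,op$ and sums the gain over \emph{all} parts, applying weighted Ramsey inside each dyadic \emph{size} class to the upper half of that class ordered by $o_jp_j$ (the median trick, which also removes any need to pigeonhole on $o_jp_j$), and it bounds the mass of the bad classes: size classes with few parts by a geometric sum ($B'$), and parts smaller than $nm^{-7/18}$ by using the escape clause to force $t\le m^{7/18}/8$ via $g\ge\sum_i g_i\ge t$ ($B''$). Your sketch has no treatment of these very small parts, for which no single $\log^2$ gain can offset $f(n)/f(n_j)$ (fact 3 only covers size ratios up to $m^{7/18}$), so their total mass must be shown to be at most a constant fraction of $n$ --- that, and not a per-part boost, is the actual role of the $m^{7/18}/8$ threshold.

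The concentrated and intermediate regimes are also not closed by what you propose. Your accounting (each remaining part adds $1$ to $g$, yellow/blue neighbours of $i^*$ add their $o_j$ or $p_j$, hence a boost of $1+\Omega(m^{-7/18})$ dominating the per-step growth of $nf(n)$) does not cover the deficit $nf(n)-\sum_i n_if(n_i)$: with two parts of sizes $0.99n$ and $0.01n$ the deficit can be of order $nf(n)/\log m$, and if the smaller part's inductive product $g_2o_2p_2\ge n_2f(n_2)$ is carried almost entirely by the colour that does \emph{not} join it to $V_1$ (say all in $p_2$ while the join is yellow, so $g_2=o_2=1$), the only boost you can add to $g$ and $o$ is the trivial one, and $m^{-7/18}\,n_1f(n_1)$ is far smaller than $nf(n)/\log m$; to win one must genuinely trade between the three coordinates, including $p\ge p_2$. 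The paper does exactly this in its Cases 1 and 2: group the parts into $U_1=V_1$, $U_2$ (yellow-joined), $U_3$ (blue-joined), keep both cross terms in the expansion and apply AM--GM, $g_1o_2p_1+g_2o_1p_2\ge 2\sqrt{(g_1o_1p_1)(g_2o_2p_2)}\ge 2\sqrt{\alpha_1\alpha_2}\,n\sqrt{f(n_1)f(n_2)}$, a distribution-free gain which is then measured against the deficit using facts 1, 2 and 5 about $f$ (plus a short claim in Case 2). Nothing in your outline produces a gain of this order. Finally, the ``discrepancy inequality for edge-weighted graphs'' you invoke for the intermediate regime is Lemma~\ref{weightedgraphs}, which plays no role in the three-colour theorem (it is used only in Section~\ref{sectlower} for the many-colour bound), and bare recombination over a monochromatic clique of $\chi$ yields no gain at all, so it cannot close the induction either.
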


Before we prove Theorem \ref{theoremr=3s=2}, we show how it implies the existence of a large subchromatic set.

\begin{theorem}
Every Gallai $3$-coloring of $E(K_m)$ has a two colored set of size $\Omega(m^{1/3}\log^2m)$.
\end{theorem}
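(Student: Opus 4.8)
The plan is to derive this corollary from Theorem~\ref{theoremr=3s=2} by applying it with $n=m$. The theorem gives a dichotomy: either $\max_S \g{F}{S} \geq m^{7/18}/8$, or $\prod_S \g{F}{S} \geq m\,f(m)$. In the first case we are immediately done, since $m^{7/18} = m^{1/18} \cdot m^{1/3} = \omega(m^{1/3}\log^2 m)$, so this branch gives even more than claimed (for $m$ large; small $m$ are handled trivially since $\g{F}{S}\geq 1$ always and the $\Omega$ hides a constant). So the substance is the second case.

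In the second case, I would combine $\prod_{S \in \binom{R}{3}} \g{F}{S} \geq m f(m)$ with the fact, noted just before the theorem statement, that $f(m) = \Omega(\log^6 m)$. Since there are exactly three pairs $S$ of colors, the geometric mean argument gives
$$\max_S \g{F}{S} \;\geq\; \left(\prod_S \g{F}{S}\right)^{1/3} \;\geq\; (m f(m))^{1/3} \;=\; \Omega\!\left(m^{1/3} (\log^6 m)^{1/3}\right) \;=\; \Omega(m^{1/3}\log^2 m).$$
That is exactly the bound we want. So the whole proof is: invoke Theorem~\ref{theoremr=3s=2} with $n=m$, dispatch the first alternative by noting $m^{7/18}/8 \geq m^{1/3}\log^2 m$ for $m$ sufficiently large, and in the second alternative take the cube root and use $f(m)=\Omega(\log^6 m)$.

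The only thing needing care is the claim $f(m) = \Omega(\log^6 m)$, which I would verify directly from the piecewise definition: for $n=m$ we are in the third branch, so $f(m) = c^3 \log^4(m^{4/9})\log^2(Cm^{-8/9}\cdot m) = c^3\log^4(m^{4/9})\log^2(Cm^{1/9})$; since $\log(m^{4/9}) = \tfrac49\log m$ and $\log(Cm^{1/9}) = \log C + \tfrac19 \log m = \Theta(\log m)$, this is $\Theta(\log^6 m)$ with an explicit constant depending only on $c, C$. There is no real obstacle here — the corollary is a one-line consequence of the theorem — so the "main obstacle" is entirely deferred to the proof of Theorem~\ref{theoremr=3s=2} itself, which is where the inductive argument and the weighted Ramsey theorem of Section~\ref{sectweightrams} do the actual work.
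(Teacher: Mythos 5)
Your proposal is correct and follows essentially the same route as the paper's own proof: apply Theorem~\ref{theoremr=3s=2} with $n=m$, dispatch the first alternative via $m^{7/18}/8 \geq \Omega(m^{1/3}\log^2 m)$, and in the second alternative use $f(m)=\Theta(\log^6 m)$ (from the third branch of the definition of $f$) together with the observation that one of the three factors in $\prod_S \g{F}{S}$ must be at least the cube root of the product.
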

\begin{proof}
By Theorem \ref{theoremr=3s=2}, we have that either $\max_S \g{F}{S} \geq m^{7/18}/8 \geq \Omega(m^{1/3}\log^2m)$, or 
$$\prod_S \g{F}{S} \geq mf(m) = c^3 m \log^4(m^{4/9}) \log^2(C m^{1/9}) \geq c^3 m 2^{-6} (\log^4m)2^{-9}(\log^2m) = 2^{-15}c^3m\log^6m.$$
As we have a lower bound on the product of three numbers, one of these numbers must be at least the cubed root. Hence, $\max_S \g{F}{S} \geq 2^{-5}cm^{1/3}\log^2m \geq \Omega(m^{1/3}\log^2m)$, as desired.
\end{proof}

We will now proceed with a further discussion about $f$. We call $(0,m^{4/9}],(m^{4/9},m^{8/9}],(m^{8/9},m]$ the ``intervals of $f$." Note that on each interval, $f(n)=\gamma \log^2(\delta n)$ for some constants $\gamma,\delta$ (where $m$ is viewed as a constant). Intuitively, $C$ is large so that we avoid the range of values in which $\log$ is poorly behaved, and $c$ is small both so that we may assume $n$ is large and to make the transitions between intervals easier. $f$ was chosen so that it satisfies certain properties, the more interesting of which we explicitly enumerate below. All of these properties are formalizations of the statement ``$f$ does not grow too quickly."

\begin{lemma}
\label{factsaboutf}
If $m \geq C$, then the following statements hold about $f$ for any integer $n$ with $1 < n \leq m$.
\begin{enumerate}
\item For any $\alpha \in \left[\frac{1}{n},1\right], \, f(\alpha n) \geq \alpha f(n)$.
\item For any $\alpha_1,\alpha_2,\alpha_3 \in \left[ \frac{1}{n},1 \right]$ such that $\sum_i \alpha_i = 1$ we have, taking $n_i = \alpha_i n$, $$nf(n) - \sum_i n_i f(n_i) \leq \frac{8}{\log C}nf(n).$$
\item For $i \geq 0$ and $m^{7/18} \geq 2^j \geq 1$ we have $f(2^i)\log^2(D2^j) \geq 512 f(2^{i+\frac{8}{7}j})$.
\item For $1 \leq \tau \leq n \leq D^3 \tau$, we have $f(\tau) \geq f(n)/2$.
\item For any $\alpha \in \left[\frac{1}{n},\frac{1}{32}\right], \, f(\alpha n) \geq 16 \alpha f(n)$.
\end{enumerate}
\end{lemma}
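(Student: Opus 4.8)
The plan is to establish all five items by unwinding the three‑piece definition of $f$, reducing each claim to an elementary inequality among logarithms, and verifying that inequality using the fact that $C$ --- and hence $\log C = D^8$ and $\log D = 2048$ --- is enormous compared with any absolute constant, together with the standing hypothesis $m \geq C$, so that $\log m \geq \log C$.

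First I would record some structural facts that make the case analysis uniform. On each of its three ``intervals'' $f$ has the shape $f(x) = a_k\log^2(b_k x)$, with $(a_1,b_1)=(c,C)$, $(a_2,b_2)=\bigl(c^2\log^2(m^{4/9}),\,Cm^{-4/9}\bigr)$, $(a_3,b_3)=\bigl(c^3\log^4(m^{4/9}),\,Cm^{-8/9}\bigr)$; moreover $b_k x \geq C$ for every $x$ in the $k$‑th interval, so $\log(b_k x) \geq \log C$ there, and the ratio of consecutive leading constants is $a_{k+1}/a_k = c\log^2(m^{4/9}) = c\bigl(\tfrac49\log m\bigr)^2$. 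A one‑line computation using $c = \tfrac14\log^{-2}C$ and $\log m \geq \log C$ shows that at each breakpoint $m^{4/9},m^{8/9}$ the value of $f$ drops by a factor of at least $4$; since $f$ is increasing within each interval, moving the smaller of two arguments left across a breakpoint only \emph{increases} $f$, so in every comparison below the delicate configuration is the one in which the relevant arguments lie in a common interval. For item 3 I would also note the consequence of $2^j \leq m^{7/18}$: the exponent shift satisfies $\tfrac87 j \leq \tfrac49\log m$, exactly one interval's worth of $\log$‑width, so $2^{i+8j/7}$ and $2^i$ lie either in the same interval or in two consecutive ones, never two apart.

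Then I would dispatch the items. In item 1 the same‑interval case reduces to $(L-u)^2 \geq 2^{-u}L^2$ with $L=\log(b_k n)\geq\log C$ and $u=\log(1/\alpha)\leq L-\log C$; taking logarithms and using $\ln(1-t)\geq -t/(1-t)$ collapses this to $L-u \geq 2/\ln 2$, which holds since $L-u\geq\log C$. Item 5 is the same estimate with $\alpha\leq\tfrac1{32}$, i.e.\ $u\geq 5$, which supplies the extra factor $16$. Item 4 is immediate: same‑interval gives $f(\tau)/f(n) = \log^2(b_k\tau)/\log^2(b_k n) \geq (1-3\log D/\log C)^2 \geq \tfrac12$ since $\log D/\log C$ is minuscule, while if $\tau,n$ straddle a breakpoint then $f(\tau)\geq f(n)$ outright, the factor‑$4$ drop dominating the $(1+3\log D/\log C)^2$ fluctuation of $f$ near the breakpoint. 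Item 2 follows from a quantitative form of item 1: for an index with $\alpha_i n$ in the same interval as $n$ one has $n_i f(n_i) = \alpha_i n\,f(\alpha_i n) \geq \alpha_i n\,(1-\log(1/\alpha_i)/\log C)^2 f(n)$, so summing with $\sum_i\alpha_i=1$ and the entropy bound $\sum_i\alpha_i\log(1/\alpha_i)\leq\log 3<2$ yields $\sum_i n_i f(n_i) \geq (1-2\log 3/\log C)nf(n) \geq (1-8/\log C)nf(n)$; the remaining indices, whose rescaled value drops into an earlier interval, are absorbed into the slack between $2\log 3$ and $8$ --- for such an index either the downward jump shows $f(\alpha_i n)$ is at least the same‑interval estimate (a short comparison of $a_{k'}\log^2(b_{k'}\alpha_i n)$ with $a_k\log^2(b_k\alpha_i n)$), or $\alpha_i$ is at most about $m^{-4/9}$ and the term is negligible.

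For item 3 I would write $2^i=m^\beta$ and $2^{i+8j/7}=m^{\beta+\sigma}$ with $\sigma=\tfrac87 j/\log m\in(0,\tfrac49]$. The same‑interval case reduces to $(\log b_k + i)(\log D + j) \geq 16\sqrt 2\,(\log b_k + i + \tfrac87 j)$, which is straightforward because $\log D$ dwarfs $16\sqrt 2$ and, when $j$ is large, the term $j$ on the left dominates. In the one‑interval‑jump case, cancelling the factor $c$ coming from $a_{k+1}/a_k$ reduces the claim to an inequality of the form
$$\left(\log C+\beta\log m\right)\left(\log D+\tfrac78\sigma\log m\right)\ \geq\ \tfrac{32\sqrt 2}{9}\cdot\tfrac{\log m}{\log C}\cdot\left(\log C+\bigl(\beta+\sigma-\tfrac49\bigr)\log m\right).$$
The point is that crossing the breakpoint forces $\beta+\sigma>\tfrac49$, hence $\beta\geq\tfrac49-\sigma$, so the left factor $\log C+\beta\log m$ is large precisely when the right factor $\log C+(\beta+\sigma-\tfrac49)\log m$ is small; one therefore wins either from this ratio being large (when $\sigma$ is small) or from the surviving term $\tfrac78\sigma\log m$ on the left (when $\sigma$ is a bounded‑below fraction of $\tfrac49$). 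Making this dichotomy precise uniformly in $m\geq C$ is the main obstacle: since $m$ is unbounded one cannot merely absorb errors into $\log C$, and one must use the exact shape of the hypothesis $2^j\leq m^{7/18}$ --- in particular the identity $\tfrac87\cdot\tfrac7{18}=\tfrac49$ --- to balance the factors $m^\beta$ and $\log^2(D2^j)$ against the loss of $c\,(\tfrac49\log m)^2$ incurred when the larger argument climbs into the next interval. These are the lengthy but routine calculations I would relegate to the appendix.
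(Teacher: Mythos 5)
Your plan has the same skeleton as the paper's proof in Appendix A: write $f$ on each of its three intervals as $a_k\log^2(b_kx)$ with $b_kx\ge C$, check that $f$ drops by a factor of at least $4$ at each breakpoint so that breakpoint-crossing configurations reduce to (or are easier than) same-interval computations, and use $\frac{8}{7}j\le\frac{4}{9}\log m$ so that item 3 crosses at most one breakpoint. Within that skeleton your micro-arguments differ and are mostly sound: for item 1 you reduce to $2\log(1-u/L)\ge -u$ and finish with $\ln(1-t)\ge -t/(1-t)$, where the paper differentiates in $\alpha$ and treats $\alpha<4/\ln^2(\delta n)$ separately; for item 2 you use a per-index estimate plus the entropy bound $\sum_i\alpha_i\log(1/\alpha_i)\le\log 3$, where the paper truncates $f$ to a function $f_2$ with $tf_2(t)$ convex and applies Jensen; item 4 is essentially identical; item 5 you do directly, where the paper deduces it from items 1 and 4; and for the jump case of item 3 you set up an explicit inequality in the exponents, where the paper invokes a rebalancing fact for products of logarithms. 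Your versions of items 1 and 2 and your item-3 reduction are, if anything, more transparent about where $\log C$ and $\log D$ get spent.

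Three points need repair or completion. First, in item 2 the squared estimate $f(\alpha_i n)\ge\bigl(1-\log(1/\alpha_i)/\log C\bigr)^2f(n)$ is false once $\log(1/\alpha_i)$ exceeds about $2\log C$, which can happen inside a single interval because $m$ is not bounded in terms of $C$ (an interval has logarithmic width up to $\frac49\log m$). What is true, and is all your summation uses, is the linearization $f(\alpha_i n)\ge\bigl(1-2\log(1/\alpha_i)/\log C\bigr)f(n)$: in the same interval it follows from $(1-u/L)^2\ge 1-2u/L\ge 1-2u/\log C$; it is trivial when the right side is nonpositive; and when the coefficient is positive one has $\log(1/\alpha_i)<\frac12\log C$, in which case $\alpha_i n$ can drop at most one interval and the factor-$4$ jump even gives $f(\alpha_i n)\ge f(n)$. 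State and use it in that form. Second, the blanket remark that breakpoints only help does not by itself deliver item 5 across a breakpoint: chaining item 1 on the two sides together with the factor-$4$ drop yields only $f(\alpha n)\ge 4\alpha f(n)$, and the drop is not guaranteed to be much more than $4$ for large $m$; either quantify the same-interval gain $2^{u}(1-u/L)^2$, or, cheaper, derive item 5 from items 1 and 4 exactly as the paper does. Third, the breakpoint-crossing case of item 3 is the one step you defer rather than carry out, and it is the genuinely delicate point; fortunately your displayed reduction does close along the lines you indicate. Writing $y=\frac87 j$, $z=i+y-\frac49\log m\in\bigl(0,\min(i,y)\bigr]$ (here is precisely where $2^j\le m^{7/18}$ and $2^i\le m^{4/9}$ enter) and $K=\frac{32\sqrt2}{9}$, one has $i\log D+\frac78y\log C\ge\frac29(\log m)\log D\ge K\log m$ and $\frac78iy\ge\frac{7}{36}(\log m)\min(i,y)\ge\frac{K}{\log C}\,z\log m$; adding these and discarding $\log C\log D$ gives exactly your inequality, and the $2\to3$ jump is identical after shifting $i$ by $\frac49\log m$. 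Spelling this out would complete the step; note that the factor $\log^2(D2^j)$, not merely $\log^2(2^j)$, is genuinely needed when $2^i$ sits just below a breakpoint and $j$ is small, since the drop at the breakpoint is only about a factor $4$, far short of $512$.
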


These properties are collectively referred to as ``the facts about $f$" and are proved in Appendix \ref{appendixA}.

We now proceed with a proof of Theorem \ref{theoremr=3s=2}.

\noindent {\bf Proof of Theorem \ref{theoremr=3s=2}:} We proceed by induction on $n$. Define $g$ to be the size of the largest set in $F$ using only the colors blue and yellow, $o$ to be the size of the largest set in $F$ using only the colors red and yellow, and $p$ to be the size of the largest set in $F$ using only the colors red and blue. We wish to show that either $gop \geq nf(n)$ or $\max(g,o,p) \geq m^{7/18}/8$.

Our base cases are those $n$ for which $f(n) \leq 1$, as for these cases by Theorem \ref{simplelowerr=3s=2} $gop \geq n \geq nf(n)$. Since $c = \log^{-2}(C^2)$, any $n < C$ is a base case.

If we are not in a base case, we have $n \geq C$.

Since $F$ is a Gallai coloring, there is a non-trivial partition $V(K_n)=V_1 \cup \ldots \cup V_t$ 
with $|V_1| \geq \ldots \geq |V_t| \geq 1$ such that there is some $2$-coloring $\chi$ of $[t]$ such that for every distinct $i,j \in [t]$ and $u \in V_i$, $v \in V_j$, the color under $F$ of $\{u,v\}$ is $\chi(i,j)$.

Suppose without loss of generality that $\chi$ only uses the colors blue and yellow. The proof will split into three cases.

\noindent {\bf Cases 1 and 2, Preliminary Discussion:} These will be the cases in which $V_1$ has a substantial portion of the vertices. Let $U_1=V_1$, $U_2$ denote the union of $V_j$ over $j \neq 1$ such that $\chi(1,j)$ is yellow, and $U_3$ denote the union of $V_j$ over $j \neq 1$ such that $\chi(1,j)$ is blue. We have that $U_1,U_2,U_3$ is a non-trivial partition of $V$. Let $n_i=\size{U_i}$. Let $\alpha_i=\size{U_i}/n=n_i/n$ for $i=1,2,3$, so $\alpha_1+\alpha_2+\alpha_3=1$. 

For $i=1,2,3$, let $F_i$ be the coloring $F$ restricted to $U_i$. Let $g_i$ be the size of the largest subchromatic set in $F_i$ using only the colors blue and yellow, $o_i$ be the size of the largest subchromatic set in $F_i$ using only the colors red and yellow, and $p_i$ be the size of the largest subchromatic set in $F_i$ using only the colors red and blue. Suppose without loss of generality $n_2 \geq n_3$, so $\alpha_2 \geq (1-\alpha_1)/2$ and $\max(\alpha_1,\alpha_2) \geq 1/3$. By the induction hypothesis, for $i=1,2,3$, we have that either one of $g_i,o_i,p_i$ is at least $m^{7/18}/8$, in which case we may use $g \geq \max_i g_i,\, o \geq \max_i o_i,\, p \geq \max_i p_i$ to complete the induction, or
$$g_i o_i p_i \geq n_if(n_i).$$

Assume we are in this latter case. Since the $U_i$ are connected only by yellow and blue edges, we may take the largest subchromatic set using only yellow and blue from each $U_i$, giving $g \geq g_1+g_2+g_3$ (in fact, equality holds). Since $U_1$ and $U_2$ are connected with yellow edges, we may take the largest subchromatic set using only red and yellow from both $U_1$ and $U_2$, or we may simply take the largest such subchromatic set from $U_3$, so we get $o \geq \max(o_1+o_2,o_3)$. Similarly, $p \geq \max(p_1+p_3,p_2)$.

Note
$$gop \geq g_1op+g_2op+g_3op \geq g_1 (o_1+o_2)(p_1+p_3) + g_2(o_1+o_2)p_2 + g_3o_3(p_1+p_3) \geq $$
$$g_1(o_1+o_2)p_1+g_2(o_1+o_2)p_2+g_3o_3p_3 = g_1o_2p_1 + g_2o_1p_2 + \sum_{i=1}^3 g_io_ip_i.$$

We thus have

$$gop - \sum_{i=1}^3 g_io_ip_i \geq  g_1o_2p_1 + g_2o_1p_2 \geq 2\sqrt{(g_1o_2p_2)(g_2o_1p_1)}=2\sqrt{(g_1o_1p_1)(g_2o_2p_2)} \geq $$
$$2 \sqrt{(n_1f(n_1))(n_2f(n_2))}  \geq 2\sqrt{\alpha_1\alpha_2}n \sqrt{f(n_1)f(n_2)},$$
where the second inequality is an instance of the arithmetic-geometric mean inequality.

\noindent {\bf Case 1:} $\alpha_1,\alpha_2 \geq (\log C)^{-1/4}$. In this case, we have 
$$gop - \sum_{i=1}^3 g_io_ip_i \geq 2\sqrt{\alpha_1\alpha_2}n \sqrt{f(n_1)f(n_2)} \geq 2\alpha_1\alpha_2n f(n) \geq 2n f(n)/\sqrt{\log C} \geq$$ 
$$\frac{8}{\log C}nf(n) \geq  nf(n) - \sum_i n_if(n_i),$$
where the second inequality is by the first fact about $f$, the third inequality is by substituting lower bounds on $\alpha_1$ and $\alpha_2$, and the last inequality is by the second fact about $f$. Hence,  
$$gop \geq \sum_{i=1}^3 g_io_ip_i  + nf(n) - \sum_i n_if(n_i) \geq nf(n),$$
where the last inequality is by the induction on hypothesis applied to $U_i$ for $i=1,2,3$.  This completes this case. 

\noindent {\bf Case 2:} $\alpha_1 \geq (\log C)^{-1/4} \geq \alpha_2$. Before we proceed with this case, we prove a simple claim.

\begin{claim}
$nf(n)+n_1f(n_1) - 2n_1f(n) > 0$.
\end{claim}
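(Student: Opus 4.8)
The plan is to reduce the inequality $nf(n)+n_1f(n_1)-2n_1f(n)>0$ to the single variable $\alpha_1 = n_1/n$, rewrite it as $f(n)\bigl(1-2\alpha_1\bigr)+\alpha_1 f(\alpha_1 n)>0$, and then split on whether $\alpha_1$ is large or small. In Case 2 we are in the regime $1/3 \le \alpha_1 \le 1$ (since $\max(\alpha_1,\alpha_2)\ge 1/3$ and $\alpha_2 \le \alpha_1$ forces $\alpha_1 \ge 1/3$ here, as $\alpha_1 \ge (\log C)^{-1/4} \ge \alpha_2$), but the inequality is cleanest to organize by comparing $\alpha_1$ against the threshold $1/32$ that appears in the fifth fact about $f$.

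First I would dispose of the case $\alpha_1 \le 1/2$: then $1-2\alpha_1 \ge 0$, so $nf(n)(1-2\alpha_1)\ge 0$, and since $n_1 f(n_1) = \alpha_1 n f(\alpha_1 n) > 0$ (as $f$ is positive on $(0,m]$ — each piece is a positive constant times $\log^2$ of something exceeding $1$, using $n_1 \ge 1$, or we are in a base case where $f \le 1$ but the statement is about $n$ not being a base case so $n \ge C$ and $n_1\ge 1$ still gives $f(n_1) > 0$ when $n_1 > 1$; if $n_1 = 1$ then $\alpha_1 = 1/n$ is tiny and we are in the other subcase), the whole expression is strictly positive. So assume $\alpha_1 > 1/2$; in particular $\alpha_1 \in [1/2,1]$ and $1-\alpha_1 \in [0,1/2]$. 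Write the target as
$$
nf(n) + \alpha_1 n f(\alpha_1 n) - 2\alpha_1 n f(n) = n\Bigl[(1-2\alpha_1)f(n) + \alpha_1 f(\alpha_1 n)\Bigr].
$$
By the first fact about $f$ applied with the parameter $\alpha_1 \in [1/n,1]$, we have $f(\alpha_1 n) \ge \alpha_1 f(n)$, hence the bracket is at least $(1-2\alpha_1 + \alpha_1^2)f(n) = (1-\alpha_1)^2 f(n)$, which is strictly positive since $f(n) > 0$ and $\alpha_1 < 1$ (as $U_2,U_3$ are nonempty in this case, so $\alpha_1 \ne 1$). This finishes it.

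The only real subtlety — and the step I'd expect to need the most care — is confirming that $f$ is strictly positive for the relevant range of arguments, i.e. that we never divide by or multiply against a zero value of $f$; this is where the base-case bookkeeping ($n \ge C$ in the non-base case, and $n_1 \ge 1$) and the fact that each piece of $f$ is $(\text{positive constant})\cdot\log^2(\cdot)$ with argument $>1$ must be checked. The algebraic inequality itself, once reduced to $(1-2\alpha_1+\alpha_1^2)f(n)\ge 0$ via the first fact about $f$, is immediate. I would present the $\alpha_1 \le 1/2$ and $\alpha_1 > 1/2$ split explicitly, invoke Lemma~\ref{factsaboutf}(1) in the second subcase, and note that the inequality is strict because $\alpha_1 \in (0,1)$ strictly (the partition $U_1,U_2,U_3$ being non-trivial).
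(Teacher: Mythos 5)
Your proof is correct and is essentially the paper's argument: dividing by $n$, applying the first fact about $f$ ($f(\alpha_1 n)\ge \alpha_1 f(n)$), and completing the square to get $(1-\alpha_1)^2 f(n)>0$ using $\alpha_1<1$ from non-triviality of the partition. The split at $\alpha_1\le 1/2$ and the worry about positivity of $f$ are superfluous (the $(1-\alpha_1)^2$ bound works for all $\alpha_1\in[1/n,1)$, and each piece of $f$ is a positive constant times $\log^2$ of a quantity at least $C>1$), but they do no harm.
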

\begin{proof}
Note $nf(n)-n_1f(n) = (1-\alpha_1) nf(n)$. Therefore,
$$n_1f(n_1)-n_1f(n) \geq \alpha_1n_1f(n)-n_1f(n) = \alpha_1^2nf(n)-\alpha_1nf(n) = -\alpha_1(1-\alpha_1)nf(n),$$
where the first inequality follows from the first fact about $f$.
From this we get
$$nf(n)+n_1f(n_1) - 2n_1f(n) \geq (1-\alpha_1)nf(n) - \alpha_1(1-\alpha_1)nf(n) = (1-\alpha_1)^2nf(n) > 0.$$
\end{proof}

In this case we have $\alpha_1 \geq 1-(\alpha_2+\alpha_3) \geq 1-2\alpha_2 \geq 1-2(\log C)^{-1/4} \geq 1/2$ and hence 

$$gop - \sum_{i=1}^3 g_io_ip_i \geq 2\sqrt{\alpha_1\alpha_2}n \sqrt{f(n_1)f(n_2)} \geq 8\alpha_1\alpha_2nf(n) \geq 4\alpha_2nf(n) \geq$$ 
$$2(\alpha_2 + \alpha_3)nf(n) = 2(n-n_1)f(n) \geq 2(n-n_1)f(n) - (nf(n) + n_1f(n_1) - 2n_1f(n)) = $$
$$nf(n)-n_1f(n_1) \geq nf(n) - \sum_i n_i f(n_i),$$
where the second inequality is by both the first fact about $f$ applied to $f(n_1)$ and the fifth fact about $f$ applied to $f(n_2)$, the third inequality is by $\alpha_1 \geq 1/2$, and the second-to-last one is by the claim.

Hence,  $$gop \geq \sum_{i=1}^3 g_io_ip_i + nf(n)-\sum_{i=1}^3 n_if(n_i) \geq nf(n),$$ where the last inequality is by the induction on hypothesis applied to $U_i$ for $i=1,2,3$.  This completes this case.

\noindent {\bf Case 3:} $\alpha_1 < (\log C)^{-1/4}$. This is the sparse case, when each part is at most a $(\log C)^{-1/4} = D^{-2}$ fraction of the total.

Take $n_i = \size{V_i}$. Take $F_i$ to be the coloring $F$ restricted to $V_i$. Take $g_i$ to be the size of the largest subchromatic set in $F_i$ using only the colors blue and yellow, $o_i$ to be the size of the largest subchromatic set in $F_i$ using only the colors red and yellow, and $p_i$ to be the size of the largest subchromatic set in $F_i$ using only the colors red and blue.

We reorder the $V_i$ so that if $i \leq j$ then $o_ip_i \leq o_jp_j$.

Take $\tau = \lfloor \log(2D^{-2}n) \rfloor$, so $\max_i n_i \leq (\log C)^{-1/4}n \leq D^{-2}n \leq 2^\tau \leq 2D^{-2}n$. Define, for $i \leq \tau$, $I_i := [2^i,2^{i+1}]$. Take $\Phi(i) = \{j : n_j \in I_i\}$. The $\Phi(i)$ are dyadically partitioning the indices; we will eventually use these partitions to construct sets to which we will apply the weighted Ramsey's theorem.

Note that $g = \sum_j g_j$, so we have $gop = \sum_j g_jop$.

We now present the idea behind the argument for the rest of this case. Fix $i$ so that $\Phi(i)$ has at least $2D2^{\frac{7}{8}(\tau-i)}$ elements and $i \geq \log(nm^{-7/18})$ (we will show that most vertices $v$ are contained in $V_j$ as $j$ varies over the $\Phi(i)$ that have this property). We will define a weighted graph whose vertices are the indices and whose coloring is $\chi$. Given an index $j$ its weight will be $(o_j,p_j)$. If we find a yellow clique in $\chi$ then the sum of the $o_j$ in the clique gives a lower bound on $o$, and similarly if we find a blue clique in $\chi$ then the sum of the $p_j$ in the clique gives a lower bound on $p$. We will apply the weighted Ramsey's theorem to half of the indices in $\Phi(i)$ (to the indices that are larger than the median of $\Phi(i)$, to be precise); from this, we will be able to conclude that if $j$ is an index smaller than the median, then $op/(o_jp_j) \geq D' f(n)/f(n_j)$ for some large constant $D'$ and so $g_jop \geq D'g_jo_jp_j f(n)/f(n_j) \geq D'n_j f(n)$. We now proceed with the argument.

When we count, we wish to omit parts $\Phi(i)$ that don't satisfy desired properties; take 
$$B':=\{i \leq \tau : \size{\Phi(i)} \leq 2D2^{\frac{7}{8}(\tau-i)}\},$$
$$B'':=\{i \leq \log(nm^{-7/18})\}.$$ Take $B = B' \cup B''$. We will show that a large fraction of the vertices are not contained in $V_j$ for $j \in \Phi(i)$ where $i$ ranges over $B$.

$$\sum_{i \in B'} \sum_{j \in \Phi(i)} n_j \leq \sum_{i \leq \tau} 2^{i+1}(2D 2^{\frac{7}{8}(\tau-i)}) = 4D2^{\frac{7}{8}\tau}\sum_{i \leq \tau} 2^{\frac{i}{8}}\leq$$ $$4D2^{\frac{7}{8}\tau}\frac{1}{2^{1/8}-1} \cdot 2^{(\tau+1)/8} \leq 8D\frac{1}{2^{1/8}-1} 2^\tau \leq 128 D 2^\tau \leq \frac{256}{D}n \leq n/4,$$
where the fourth inequality follows from $2^{1/8} \geq (1+1/16)$.

Note if $\sum_i g_i \geq m^{7/18}/8$ then we may complete the induction; assume this is not the case. In particular, we get $t \leq m^{7/18}/8$ (since $g_i \geq 1$). Therefore,
$$\sum_{i \in B''} \sum_{j \in \Phi(i)} n_j \leq \sum_{i \in B''} \sum_{j \in \Phi(i)}2nm^{-7/18} \leq 2tnm^{-7/18} \leq n/4.$$
Hence,
$$\sum_{i \in B} \sum_{j \in \Phi(i)} n_j \leq \sum_{i \in B'} \sum_{j \in \Phi(i)} n_j + \sum_{i \in B''} \sum_{j \in \Phi(i)} n_j \leq n/4 + n/4 \leq n/2.$$

As a corollary we get $\sum_{i \not \in B} \sum_{j \in \Phi(i)} n_j \geq n/2$.

For any fixed $i \leq \tau$ such that $i \not \in B$, take $\beta_i$ to be the median of $\Phi(i)$ (if $\Phi(i)$ has an even number of elements, take $\beta_i$ to be the larger of the two medians). Consider $\{(o_j,p_j) : j \in \Phi(i), j \geq \beta_i\}$. By $i \not \in B$, this has at least $D2^{\frac{7}{8}(\tau-i)} \geq M$ elements (recall from the weighted Ramsey's theorem that $M=2^{16}$), so we get by applying the weighted Ramsey's theorem to this set that $op \geq o_{\beta_i}p_{\beta_i}\log^2\left(D2^{\frac{7}{8}(\tau-i)}\right)/32$. Finally, observe that either one of the $o_j,p_j,g_j$ is at least $m^{7/18}/8$ in which case we may conclude the induction, or by the induction hypothesis we may assume $o_jp_jg_j \geq n_jf(n_j)$. Therefore,

$$\sum_{j \in \Phi(i)} g_jop \geq \sum_{j \in \Phi(i)} g_j o_{\beta_i}p_{\beta_i} \log^2\left(D2^{\frac{7}{8}(\tau-i)}\right)/32 \geq \sum_{j \in \Phi(i) : j \leq \beta_i} g_j o_{\beta_i}p_{\beta_i}\log^2\left(D2^{\frac{7}{8}(\tau-i)}\right)/32 \geq $$
$$\sum_{j \in \Phi(i) : j \leq \beta_i} g_j o_jp_j\log^2\left(D2^{\frac{7}{8}(\tau-i)}\right)/32 \geq \sum_{j \in \Phi(i) : j \leq \beta_i} n_jf(n_j)\log^2\left(D2^{\frac{7}{8}(\tau-i)}\right)/32 \geq $$
$$\sum_{j \in \Phi(i) : j \leq \beta_i} n_jf\left(2^{\log n_j}\right)\log^2\left(D2^{\frac{7}{8}(\tau-\log n_j)}\right)/32 \geq \sum_{j \in \Phi(i) : j \leq \beta_i} 16n_jf(2^\tau) \geq \sum_{j \in \Phi(i) : j \leq \beta_i} 8n_jf(n),$$
where the third inequality is by $o_j p_j \leq o_{j'}p_{j'}$ for $j \leq j'$, the fourth inequality is by the induction hypothesis applied to $V_j$, the sixth inequality is by the third fact about $f$, and the seventh inequality is by the fourth fact about $f$ and noting $2^\tau \geq D^{-3}n$.

We now consider for any set $J \subseteq \Phi(i)$:

$$\sum_{j \in J} n_j \geq 2^i \size{J}.$$
$$\sum_{j \in \Phi(i)} n_j \leq 2^{i+1} \size{\Phi(i)}.$$

This gives:
$$\frac{\sum_{j \in J} n_j}{\sum_{j \in \Phi(i)} n_j} \geq \frac{\size{J}}{2\size{\Phi(i)}}.$$

Noting that $\size{\{j \in \Phi(i) : j \leq \beta_i\}} \geq \size{\Phi(i)}/2$:

$$\sum_{j \in \Phi(i) : j \leq \beta_i} 8n_jf(n) \geq \frac{1}{4}\sum_{j \in \Phi(i)}8n_jf(n) = 2f(n)\sum_{j \in \Phi(i)}n_j.$$

Therefore,

$$gop \geq \sum_j g_j op \geq \sum_{i \leq \tau} \sum_{j \in \Phi(i)} g_jop \geq \sum_{i \leq \tau : i \not \in B} \sum_{j \in \Phi(i)} g_jop \geq \sum_{i \leq \tau : i \not \in B}2f(n)\sum_{j \in \Phi(i)}n_j =$$
$$2 f(n) \sum_{i \leq \tau : i \not \in B}\sum_{j \in \Phi(i)}n_j \geq 2f(n) \frac{n}{2} = nf(n).$$

We have thus concluded the induction. $\qed$

We informally refer to $B''$ in the above proof as large if a large fraction of the vertices are contained in a $V_j$ for $j \in \Phi(i)$ where $i$ ranges over $B''$. The case in which $B''$ was large easily implied the desired result. In extending this result in Section \ref{sectlower} to more colors, the primary difficulty is the following: when $s$ is not $2$, it is not obvious that there is a large $s$-colored set as a result of $B''$ being large.

\section{Upper bound for many colors}
\label{sectgeneralupperbound}
In this section we will give asymptotically tight upper bounds for how large of a subchromatic set must exist in an edge coloring on $m$ vertices. We will first show how to construct such colorings from weighted graphs with vertex set $R$, and then we will choose such graphs to finish the construction. The next theorem states that if we have a weighted graph on $r$ vertices with edge weights $w_P$, then we can find a coloring $F$ so that $\g{F}{S}$ is, up to logarithmic factors, $\prod_{P \subseteq S} w_P$.

\begin{lemma}
Given a weighted graph $(R,\fP)$ on $r$ vertices with integer edge weights $\{w_P\}_{P \in \fP}$, taking $m:=\prod_{P \in \fP} w_P$, there is a Gallai $r$-coloring on $m$ vertices so that for any $S \subseteq R$, the size of the largest subchromatic set with colors in $S$ is at most $\prod_{P \in \fP : P \subseteq S} w_P \cdot \prod_{P \in \fP : \size{P \cap S}=1} 2 \log w_P.$
\end{lemma}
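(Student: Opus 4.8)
The plan is to build the coloring $F$ by iterated lexicographic products of two-colorings, one factor for each edge $P$ of the weighted graph $(R,\fP)$, and then invoke the multiplicativity lemma $\g{F_1\otimes F_2}{S}=\g{F_1}{S}\cdot\g{F_2}{S}$ together with Erd\H{o}s's lower bound on Ramsey numbers to control each factor. Concretely, for each $P=\{a,b\}\in\fP$, apply the Erd\H{o}s--Szekeres / Erd\H{o}s bound to produce a $2$-coloring $F_P$ of $E(K_{w_P})$, using only the two colors $a$ and $b$, in which every monochromatic clique has order at most $2\log w_P$ (for $w_P$ too small for the asymptotic bound to literally give this, one checks the trivial bound $2\log w_P\geq w_P$ suffices since $\g{F_P}{S}\leq w_P$ always). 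Fix an arbitrary ordering $P_1,\ldots,P_{\binom{r}{2}}$ of $\fP$ and set $F:=F_{P_1}\otimes\cdots\otimes F_{P_{\binom r2}}$; this is a coloring of $K_m$ with $m=\prod_P w_P$, and it is a Gallai $r$-coloring by Corollary \ref{isgallaicoloring}.

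The core computation is then to evaluate $\g{F_P}{S}$ for a single factor and a fixed color set $S\subseteq R$, and multiply. There are three cases according to how $P$ meets $S$. If $P\subseteq S$, then every edge of $F_P$ has a color in $S$, so the whole vertex set is $S$-subchromatic and $\g{F_P}{S}=w_P$. If $P\cap S=\emptyset$, then no edge of $F_P$ lies in $S$, so an $S$-subchromatic set can contain at most one vertex and $\g{F_P}{S}=1$. If $\size{P\cap S}=1$, say $P=\{a,b\}$ with $a\in S$, $b\notin S$, then an $S$-subchromatic set in $F_P$ uses no edge of color $b$, hence is a clique in color $a$ of $F_P$, hence has order at most $2\log w_P$; thus $\g{F_P}{S}\leq 2\log w_P$. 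Applying the multiplicativity lemma inductively over the $\binom r2$ factors gives
$$\g{F}{S}=\prod_{P\in\fP}\g{F_P}{S}\leq \prod_{P\subseteq S} w_P\cdot\prod_{\size{P\cap S}=1} 2\log w_P\cdot\prod_{P\cap S=\emptyset}1,$$
which is exactly the claimed bound. (Since we only need an upper bound, we may even replace the middle-case equality $\g{F_P}{S}=w_P$ for $P\subseteq S$ by the trivial $\leq w_P$, so no lower-bound argument on the $F_P$ is needed.)

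There is essentially no obstacle here: the only point requiring a moment's care is the existence of the $2$-colorings $F_P$ with small monochromatic cliques simultaneously for all values of $w_P$, including small ones where the Erd\H{o}s bound is vacuous — handled by observing that when $2\log w_P \ge w_P$ the desired inequality $\g{F_P}{S}\le 2\log w_P$ holds for any $2$-coloring. The multiplicativity lemma and Corollary \ref{isgallaicoloring} do all the structural work, and the case analysis above is routine. I would present the argument in the order: (1) construct each $F_P$; (2) form the product $F$ and note it is Gallai; (3) do the three-case computation of $\g{F_P}{S}$; (4) multiply via the lemma to conclude.
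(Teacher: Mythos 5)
Your proposal is correct and follows essentially the same route as the paper: take one Erd\H{o}s--Szekeres-type $2$-coloring $F_P$ per edge $P$ of the weighted graph, form the lexicographic product, invoke Corollary \ref{isgallaicoloring} for the Gallai property, and conclude via the multiplicativity of $\g{\cdot}{S}$ with the same three-case analysis of $\size{P\cap S}$. The only difference is your explicit remark about small $w_P$, which the paper leaves implicit.
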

\begin{proof}
We may define a Gallai $r$-coloring on $m$ vertices as follows: take $P_1,\ldots,P_k$ an arbitrary enumeration of $\fP$. For each edge $P$, take $F_P$ to be a $2$-coloring of $E(K_{w_P})$ using colors from $P$ so that the largest monochromatic clique has order at most $2 \log w_P$ (such a coloring exists by the Erd\H{o}s-Szekeres bound for Ramsey numbers \cite{ES35}). We define a coloring $F$ on $m$ vertices by
$$F=F_{P_1}\otimes F_{P_2} \otimes \cdots \otimes F_{P_{k}}.$$
$F$ is a Gallai coloring by Corollary \ref{isgallaicoloring}. Given any $S \subseteq R$, note that $\g{F_P}{S} = w_P$ if $P \subseteq S$, as $F_P$ uses only colors from $P$. If $\size{P \cap S} = 1$, then the largest subchromatic set in $F_P$ using colors from $P \cap S$ is at most $2 \log w_P$ by choice of $F_P$, so $\g{F_P}{S} \leq 2 \log w_P$. If $\size{P \cap S}=0$, then $\g{F_P}{S} = 1$ as any two distinct vertices are connected by an edge the color of which is not in $S$. Therefore,
$$\g{F}{S} = \prod_i \g{F_{P_i}}{S} \leq \prod_{P \in \fP : P \subseteq S}w_P \cdot \prod_{P \in \fP : \size{P \cap S} = 1} 2 \log w_P.$$
\end{proof}

The condition in the above lemma that the edge weights be integers is slightly cumbersome; we will now eliminate it.

\begin{lemma}
For any fixed integer $r \geq 3$, given a weighted graph $(R,\fP)$ on $r$ vertices with weights $\{w_P\}_{P \in \fP}$, taking $m:=\prod_{P \in \fP} w_P$, if $m$ is an integer and each $w_P$ satisfies $w_P \geq \omega(1)$, then there is a Gallai $r$-coloring on $m$ vertices so that for any $S \subseteq R$, the size of the largest subchromatic set is at most $(1+o(1))\prod_{P \in \fP : P \subseteq S} w_P \cdot \prod_{P \in \fP : \size{P \cap S}=1} 2 \log w_P.$
\end{lemma}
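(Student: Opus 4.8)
The idea is to reduce the real-weighted case to the integer-weighted case handled in the previous lemma by rounding each $w_P$ down to $\lfloor w_P \rfloor$, and then padding to fix up the product. The technical nuisance is that the rounded weights need not multiply to exactly $m$, so the plan is to first round, then scale one of the weights (or introduce a small correction on a single edge) to restore the product to $m$ exactly, and finally argue that these modifications only perturb each $\g{F}{S}$ by a $(1+o(1))$ factor.

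\medskip\noindent\textbf{Step 1: rounding.} For each $P \in \fP$ put $w_P' := \lfloor w_P \rfloor$. Since $w_P \geq \omega(1)$ we have $w_P' = (1+o(1)) w_P$ and $w_P' \to \infty$, so in particular $\log w_P' = (1+o(1)) \log w_P$. Set $m' := \prod_{P} w_P'$; then $m' \leq m$ and $m' = (1+o(1)) m$. The ratio $m/m'$ is a bounded quantity (in fact $1 + o(1)$), but it need not be an integer, so I cannot simply apply the integer lemma to the $w_P'$ and be done.

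\medskip\noindent\textbf{Step 2: restoring the product.} Fix one edge $P_0 \in \fP$ (any edge of $K_r$, say one containing a prescribed color). Replace $w_{P_0}'$ by $w_{P_0}'' := w_{P_0}' \cdot \big(m / m'\big)$ rounded to the nearest integer that makes $\prod_P w_P''$ equal to $m$ — concretely, let $w_{P_0}'' := m / \prod_{P \neq P_0} w_P'$, which is an integer since $m$ is an integer and $m = \prod_P w_P$ with the other factors being the integers $w_P'$... . (If divisibility is an obstruction one instead lets $w_{P_0}''$ be the integer nearest $m/\prod_{P\neq P_0} w_P'$ and absorbs the $O(1)$ multiplicative discrepancy into a single extra product factor, or simply deletes up to $O(m/m'-1)\cdot(\text{something})=o(m)$ vertices at the end; either way the loss is $1+o(1)$.) In any case $w_{P_0}'' = (1+o(1)) w_{P_0}$, so it too tends to infinity and has $\log w_{P_0}'' = (1+o(1)) \log w_{P_0}$. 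Now the weights $\{w_P''\}$ are integers with product exactly $m$.

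\medskip\noindent\textbf{Step 3: apply the integer lemma and compare.} Apply the previous lemma to the integer-weighted graph $(R,\fP)$ with weights $\{w_P''\}$ to get a Gallai $r$-coloring $F$ on $m$ vertices with
$$
\g{F}{S} \;\leq\; \prod_{P \in \fP : P \subseteq S} w_P'' \cdot \prod_{P \in \fP : \size{P \cap S}=1} 2\log w_P''
$$
for every $S \subseteq R$. Since each $w_P'' = (1+o(1)) w_P$ and each $\log w_P'' = (1+o(1)) \log w_P$, and since the number of factors $\binom{r}{2}$ is a fixed constant, the right-hand side is $(1+o(1)) \prod_{P \subseteq S} w_P \cdot \prod_{\size{P\cap S}=1} 2\log w_P$, as desired. (Here I use that a product of a bounded number of $(1+o(1))$ factors is again $1+o(1)$, and that all the $w_P$, hence all the logs, go to infinity so none of these factors is degenerate.)

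\medskip\noindent\textbf{Main obstacle.} The only genuinely delicate point is Step 2: ensuring that after rounding one can restore the product to \emph{exactly} $m$ while keeping all weights integral and $(1+o(1))$-close to the originals. The cleanest resolution is the one indicated above — round all but one weight down, and define the last weight to be whatever integer makes the product come out to $m$; one must check this last weight is positive and still $\omega(1)$, which follows because $m/\prod_{P\neq P_0} w_P' = w_{P_0}\cdot \prod_{P\neq P_0}(w_P/w_P') = (1+o(1))w_{P_0} \to \infty$. If $m$ happens not to be divisible by $\prod_{P\neq P_0} w_P'$ (which can occur once we have perturbed those weights), one falls back to taking the nearest integer and noting the resulting multiplicative error is $1 + O(1/w_{P_0}) = 1+o(1)$, which can be absorbed by deleting $o(m)$ vertices from the final coloring — deleting vertices only shrinks every subchromatic set, so the upper bound is preserved. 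Everything else is bookkeeping with $(1+o(1))$ factors.
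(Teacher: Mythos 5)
Your overall strategy---round the real weights to integers, invoke the previous (integer-weight) lemma, and absorb all the errors into the $(1+o(1))$ factor---is essentially the paper's, but your Step 2 is both flawed and unnecessary. The claim that $w_{P_0}'' = m/\prod_{P\neq P_0} w_P'$ ``is an integer since $m$ is an integer'' is false: $m=\prod_P w_P$ with the $w_P$ real, so there is no reason the integer $\prod_{P\neq P_0}\lfloor w_P\rfloor$ should divide $m$. Your fallback (take the nearest integer and ``delete $o(m)$ vertices'') breaks in one direction: if the quotient rounds down, then $\prod_P w_P'' < m$, so the coloring produced by the integer lemma lives on \emph{fewer} than $m$ vertices, and you cannot reach a coloring on exactly $m$ vertices by deleting vertices---deletion only helps when you have too many, and your argument has no mechanism for adding vertices while preserving the subchromatic bounds. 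Since you rounded every other weight \emph{down}, undershooting $m$ is the typical situation, not a corner case, so as written the construction need not produce a coloring on $m$ vertices at all.

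The clean fix is what the paper does, and it makes Step 2 redundant: round every weight \emph{up}, $w_P' := \lceil w_P\rceil$, so that $w_P' \leq (1+o(1))w_P$ and $\prod_P w_P' \geq m$ automatically. The integer lemma then yields a Gallai $r$-coloring on $\prod_P w_P' \geq m$ vertices in which, for every $S$, the largest subchromatic set has size at most $\prod_{P\subseteq S} w_P' \cdot \prod_{|P\cap S|=1} 2\log w_P' \leq (1+o(1))\prod_{P\subseteq S} w_P \cdot \prod_{|P\cap S|=1} 2\log w_P$, and restricting to any $m$ vertices keeps it a Gallai coloring while only shrinking subchromatic sets. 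No exact restoration of the product is needed; all you need is that the rounded product is at least $m$. Your Step 3 bookkeeping with the boundedly many $(1+o(1))$ factors is fine as it stands.
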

\begin{proof}
Take $w_P' = \lceil w_P \rceil$. Since $w_P \geq \omega(1)$, we get $w_P' \leq (1+o(1)) w_P$. We may apply the previous lemma to the $w_P'$ to get an $r$-Gallai coloring on $\prod_P w_P' \geq m$ vertices so that for any $S \subseteq R$ the size of the largest subchromatic set is at most
$$\prod_{P \in \fP : P \subseteq S} w_P' \cdot \prod_{P \in \fP : \size{P \cap S}=1} 2 \log w_P' \leq (1+o(1))\prod_{P \in \fP : P \subseteq S} w_P \cdot \prod_{P \in \fP : \size{P \cap S}=1} 2 \log w_P.$$
Restrict this coloring to any $m$ vertices; it is still a Gallai $r$-coloring and for any $S \subseteq R$ the size of the largest subchromatic set is at most $(1+o(1))\prod_{P \in \fP : P \subseteq S} w_P \cdot \prod_{P \in \fP : \size{P \cap S}=1} 2 \log w_P.$
\end{proof}

Now, if we wish to obtain colorings without large subchromatic sets, we need only construct appropriate weighted graphs. Intuitively, we would like to minimize the number of edges in such a graph (while still being able to maintain that all the $S \subseteq R$ have approximately the same value of $\prod_{P \subseteq S} w_P$), as every edge creates extra $\log$ factors. This observation motivates the following bounds.

\begin{theorem}
There is a Gallai $r$-coloring on $m$ vertices so that for any $S \in {R \choose s}$ the size of the largest subchromatic set is at most $(1+o(1))m^{{s \choose 2}/{r \choose 2}}\log^{c_{r,s}} m$, where

$$c_{r,s}=
\left\{
\begin{array}{cl}
s(r-s) & \mbox{if } s < r-1,\\
1 & \mbox{if } s=r-1 \mbox{ and r is even},\\
(r+3)/r & \mbox{if } s=r-1 \mbox{ and r is odd}.
\end{array}\right.
$$
\end{theorem}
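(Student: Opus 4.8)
The plan is to apply the previous lemma to a carefully chosen weighted graph $(R,\fP)$ on $r$ vertices, where the edge weights $w_P$ are chosen so that every $S \in {R \choose s}$ has $\prod_{P \subseteq S} w_P \approx m^{{s \choose 2}/{r \choose 2}}$ while the number of edges $P$ with $\size{P \cap S} = 1$ (which is what produces the $\log$ factors) is minimized over the worst $S$. Concretely, for a given graph $H$ on vertex set $R$ with all weights equal on its edges and weight $1$ off its edges, the bound from the lemma for a set $S$ is governed by $e_H(S)$, the number of edges of $H$ inside $S$, and by the number of edges of $H$ with exactly one endpoint in $S$, i.e.\ the edge boundary $\partial_H(S)$. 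So the task reduces to choosing $H$ (and the common weight) to make $e_H(S)$ as constant as possible across $S \in {R \choose s}$ and to minimize $\max_{S} \size{\partial_H(S)}$ subject to that.

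First I would handle the generic case $s < r - 1$: take $H = K_r$ (all ${r \choose 2}$ edges present) with every edge weight $w_P = m^{1/{r \choose 2}}$, so that $\prod_{P \in \fP} w_P = m$ and for every $S \in {R \choose s}$ we get $\prod_{P \subseteq S} w_P = m^{{s \choose 2}/{r \choose 2}}$ exactly. The number of edges with exactly one endpoint in $S$ is $s(r-s)$, so the lemma gives a subchromatic set of size at most $(1+o(1)) m^{{s \choose 2}/{r \choose 2}} (2\log m^{1/{r\choose 2}})^{s(r-s)} = (1+o(1)) m^{{s \choose 2}/{r \choose 2}} \log^{s(r-s)} m$, matching $c_{r,s}$ in this range (absorbing the constant $2^{s(r-s)}/{r\choose 2}^{s(r-s)}$ into the $1+o(1)$ is not quite right — it is an absolute constant, so the statement should really read $O(\cdot)$, but as written the intent is that constants are suppressed; I would just carry the constant along or note it). One must also check $w_P \geq \omega(1)$, which holds since $r,s$ are fixed and $m \to \infty$.

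Next I would treat $s = r-1$, which is more delicate because with $H = K_r$ the exponent would be $r-1$ rather than the claimed $1$ or $(r+3)/r$; here the point is that one does \emph{not} want all edges present. For $r$ even, take $H$ to be a perfect matching on $R$ (so $r/2$ edges), each of weight $m^{2/r}$, giving $\prod w_P = m$. For any $S \in {R \choose r-1}$, exactly one vertex $v$ is omitted, the matching edge at $v$ has exactly one endpoint in $S$, and all other $r/2 - 1$ matching edges lie inside $S$; hence $\prod_{P \subseteq S} w_P = m^{2(r/2-1)/r} = m^{(r-2)/r} = m^{{r-1 \choose 2}/{r \choose 2}}$ and only one edge contributes a $\log$ factor, giving size at most $(1+o(1)) m^{{r-1\choose 2}/{r\choose 2}} \cdot 2\log m^{2/r} = (1+o(1)) m^{{r-1\choose 2}/{r\choose 2}} \log m$, so $c_{r,r-1} = 1$. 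For $r$ odd a perfect matching does not exist, so I would use the graph described in the introduction: a triangle on three vertices together with a matching on the remaining $r-3$ vertices, with the triangle edges of weight $m^{1/r} \log^{(r-3)/2r} m$ (call it $a$) and the matching edges of weight $m^{2/r} \log^{-3/r} m$ (call it $b$), chosen precisely so that $a^3 b^{(r-3)/2} = m$. Then for $S$ omitting a matching vertex one loses one matching edge; for $S$ omitting a triangle vertex one loses two triangle edges but the third triangle edge stays inside; a short computation of $\prod_{P\subseteq S} w_P$ times the relevant $\log$ factors shows both cases yield $(1+o(1)) m^{(r-3)/r}\log^{(r+3)/r} m$ — I would just carry out this arithmetic and verify the two cases coincide, which is the ``straightforward to check'' claim from the introduction.

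The main obstacle is the bookkeeping in the $s = r-1$, $r$ odd case: one must verify that the two distinct types of $(r-1)$-subsets (omitting a triangle vertex versus omitting a matching vertex) give the \emph{same} leading order $m^{(r-3)/r}\log^{(r+3)/r}m$, which is exactly why the weights $a$ and $b$ have those particular fractional powers of $\log m$ baked in — this is a constrained optimization ($\min$ over the two subset types of the product bound, maximized by equalizing them) that must be solved and then checked. Everything else is a direct application of the preceding lemma plus the observation that in each construction every edge weight is $\omega(1)$ (true since $r$ is fixed and $m\to\infty$, and the negative power of $\log m$ in case $r$ odd is dominated by the positive power of $m$). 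I would also remark that the $(1+o(1))$ in the statement should more honestly be an absolute multiplicative constant depending only on $r$, coming from the factors of $2$ in $2\log w_P$ and from $\lceil w_P \rceil$ versus $w_P$; since the paper explicitly disclaims optimizing constants, I would state it as is and move on.
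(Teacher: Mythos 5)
Your proposal is correct and is essentially identical to the paper's proof: it applies the same lemma on product colorings from weighted graphs to the same three constructions (the clique $K_r$ with weights $m^{1/{r \choose 2}}$ for $s<r-1$, a perfect matching with weights $m^{2/r}$ for $s=r-1$ and $r$ even, and the triangle-plus-matching with weights $m^{1/r}(\log m)^{(r-3)/2r}$ and $m^{2/r}(\log m)^{-3/r}$ for $r$ odd), and the arithmetic you outline for the two types of omitted vertex in the odd case is exactly what the paper carries out. Your worry about the constants is unnecessary: since each $2\log w_P \leq \log m$ for the chosen weights, the factors of $2$ are absorbed without inflating the stated bound, so only the $\lceil w_P\rceil$ rounding contributes to the $(1+o(1))$.
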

\begin{proof}
If $s < r-1$, we may apply the previous lemma to a clique on $r$ vertices with edge weights $m^{1/{r \choose 2}}$. Any $S \subseteq R$ of size $s$ has ${s \choose 2}$ internal edges and $s(r-s)$ edges intersecting it in one vertex. By the previous lemma, we may find a Gallai $r$-coloring where the size of the largest subchromatic set is asymptotically at most:
$$m^{{s \choose 2}/{r \choose 2}}\left(2\log\left(m^{1/{r \choose 2}}\right)\right)^{s(r-s)} \leq m^{{s \choose 2}/{r \choose 2}}(\log m)^{s(r-s)}.$$

If $s=r-1$ and $r$ is even, we may consider a perfect matching on $r$ vertices where each edge has weight $m^{2/r}$; any subset of size $r-1$ contains $r/2-1$ edges and there is one edge with which it shares exactly one vertex. By the previous lemma, we may find a Gallai $r$-coloring where the size of the largest subchromatic set is asymptotically at most:
$$m^{(r/2-1)/(r/2)}2\log(m^{1/(r/2)}) \leq m^{(r/2-1)/(r/2)}\log m = m^{{s \choose 2}/{r \choose 2}}\log m.$$

If $s=r-1$ and $r$ is odd, we may consider a graph formed by taking the disjoint union of a triangle on $3$ vertices and a matching with $(r-3)/2$ edges. The edges of the triangle will each have weight $w_1:=m^{1/r}(\log m)^{(r-3)/2r}$ and the edges of the matching will each have weight $w_2 := m^{2/r}(\log m)^{-3/r}$. Note that the product of the weights is $w_1^3w_2^{(r-3)/2}=m$. Let $S \subseteq R$ of size $s=r-1$ be given.

If the vertex not contained in $S$ is part of the triangle then $S$ contains $(r-3)/2$ edges of weight $w_2$ and $1$ edge of weight $w_1$. Furthermore, there are two edges each of weight $w_1$ that $S$ intersects in one vertex. In the graph obtained from the previous lemma the size of the largest subchromatic set taking colors from $S$ is asymptotically at most:

\begin{eqnarray*}
w_1w_2^{(r-3)/2}(2\log w_1)^2 & = & m^{(r-2)/r}(\log m)^{-(r-3)/r}(2 \log(m^{1/r}(\log m)^{(r-3)/2r}))^2 \\
& \leq & m^{(r-2)/r}(\log m)^{-(r-3)/r}(\log m)^2 \\
& = & m^{{s \choose 2}/{r \choose 2}}(\log m)^{(r+3)/r}.
\end{eqnarray*}

If the vertex not contained in $S$ is part of the matching then $S$ contains $(r-5)/2$ edges of weight $w_2$ and $3$ edges of weight $w_1$. Furthermore, there is one edge of weight $w_2$ that intersects $S$ in one vertex. In the graph obtained from the previous lemma the size of the largest subchromatic set taking colors from $S$ is asymptotically at most:

\begin{eqnarray*}
w_1^3w_2^{(r-5)/2}(2 \log w_2) & = & m^{(r-2)/r}(\log m)^{3/r}(2 \log(m^{2/r}(\log m)^{-3/r})) \\
& \leq & m^{(r-2)/r}(\log m)^{3/r}(\log m) \\ & = & m^{{s \choose 2}/{r \choose 2}}(\log m)^{(r+3)/r}.
\end{eqnarray*}
\end{proof}

\section{Weak lower bound for many colors}
\label{sectweaklower}
We now provide a simple lower bound for the largest size of a subchromatic set in any $r$-coloring of $E(K_m)$ that shows our upper bounds are tight up to polylogarithmic factors; we show that any Gallai $r$-coloring on $m$ vertices contains a subchromatic set of size at least $m^{{s \choose 2}/{r \choose 2}}$. The following is a common generalization of H\"older's inequality that we will find useful.

\begin{lemma}\label{holder}
If $\mathcal{S}$ is a finite set of indices and for each $S \in \mathcal{S}$ we have $g_S$ is a function mapping $[t]$ to the non-negative reals, then
$$\prod_{S \in \mathcal{S}}\sum_i g_S(i) \geq \left(\sum_i \prod_{S \in \mathcal{S}} g_S(i)^{1/\size{\mathcal{S}}}\right)^{\size{\mathcal{S}}}$$
\end{lemma}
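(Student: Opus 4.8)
The plan is to prove this generalized H\"older inequality by induction on $\size{\mathcal{S}}$, reducing at each step to the classical (two-function) H\"older inequality. The base case $\size{\mathcal{S}}=1$ is trivial, since both sides equal $\sum_i g_S(i)$. For the inductive step, write $\size{\mathcal{S}} = k+1$, fix one index $S_0 \in \mathcal{S}$, and set $\mathcal{S}' = \mathcal{S} \setminus \{S_0\}$, so $\size{\mathcal{S}'} = k$.

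The key computation is to apply classical H\"older with conjugate exponents $k+1$ and $\frac{k+1}{k}$ to the pair of sequences $\left(g_{S_0}(i)^{1/(k+1)}\right)_i$ and $\left(\prod_{S \in \mathcal{S}'} g_S(i)^{1/(k+1)}\right)_i$. This yields
$$\sum_i \prod_{S \in \mathcal{S}} g_S(i)^{1/(k+1)} = \sum_i g_{S_0}(i)^{1/(k+1)} \prod_{S \in \mathcal{S}'} g_S(i)^{1/(k+1)} \leq \left(\sum_i g_{S_0}(i)\right)^{1/(k+1)} \left(\sum_i \prod_{S \in \mathcal{S}'} g_S(i)^{1/k}\right)^{k/(k+1)}.$$
Raising both sides to the power $k+1$ gives
$$\left(\sum_i \prod_{S \in \mathcal{S}} g_S(i)^{1/(k+1)}\right)^{k+1} \leq \left(\sum_i g_{S_0}(i)\right) \left(\sum_i \prod_{S \in \mathcal{S}'} g_S(i)^{1/k}\right)^{k}.$$
By the induction hypothesis applied to $\mathcal{S}'$, the second factor is at most $\prod_{S \in \mathcal{S}'} \sum_i g_S(i)$, and combining with the isolated factor $\sum_i g_{S_0}(i)$ gives exactly $\prod_{S \in \mathcal{S}} \sum_i g_S(i)$, completing the induction.

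There is essentially no serious obstacle here; the only points requiring a little care are the degenerate cases. If some $\sum_i g_S(i) = 0$ for an index $S$, then $g_S \equiv 0$, so every product term $\prod_{S} g_S(i)^{1/\size{\mathcal{S}}}$ on the right-hand side vanishes and the inequality reads $0 \geq 0$; one should dispatch this case separately so that the division by $\sum_i g_{S_0}(i)$ implicit in applying H\"older is legitimate (alternatively, phrase the H\"older step in the symmetric form that avoids division). One should also note that the exponent $1/k$ appearing inside the induction hypothesis for $\mathcal{S}'$ matches the normalization $1/\size{\mathcal{S}'}$, which is why the bookkeeping closes cleanly. If one prefers to avoid induction entirely, an alternative is a direct argument: normalize so that $\sum_i g_S(i) = 1$ for each $S$ (handling the zero case as above), and apply the weighted AM--GM inequality $\prod_S x_S^{1/\size{\mathcal{S}}} \leq \frac{1}{\size{\mathcal{S}}}\sum_S x_S$ pointwise with $x_S = g_S(i)$, then sum over $i$; but the inductive reduction to classical H\"older is cleaner to write.
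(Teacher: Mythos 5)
Your proof is correct. Note that the paper does not actually prove Lemma \ref{holder}; it is stated as ``a common generalization of H\"older's inequality'' and used as a black box, so there is no in-paper argument to compare against. Your inductive reduction to the two-sequence H\"older inequality is the standard route and the bookkeeping closes: with conjugate exponents $k+1$ and $\tfrac{k+1}{k}$ the inner exponent becomes $\tfrac{(k+1)/k}{k+1}=\tfrac1k=1/\size{\mathcal{S}'}$, exactly matching the induction hypothesis. One small remark: the degenerate case $\sum_i g_{S_0}(i)=0$ needs no separate treatment, since the classical H\"older inequality $\sum_i a_ib_i\le \left(\sum_i a_i^p\right)^{1/p}\left(\sum_i b_i^q\right)^{1/q}$ involves no division and holds verbatim when a factor vanishes; your alternative normalization-plus-AM--GM argument is also valid and arguably even shorter.
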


Using the above lemma, we will prove a lower bound on the product of the $\g{F}{S}$ for $F$ a Gallai $r$-coloring. This will easily imply the desired lower bound.

\begin{theorem}
For any Gallai $r$-coloring $F$ on $m$ vertices,
$$\prod_{S \in {R \choose s}} \g{F}{S} \geq n^{r-2 \choose s-2}.$$
\end{theorem}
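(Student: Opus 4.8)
The plan is to induct on $m$, using Gallai's structure theorem (Lemma \ref{lemmagallai}) together with the generalized H\"older inequality (Lemma \ref{holder}), exactly in the spirit of Theorem \ref{simplelowerr=3s=2}. The base case $m=1$ is immediate since every $\g{F}{S}=1$ and the right-hand side is also $1$. For $m>1$, apply Gallai's theorem to get a nontrivial partition $V(K_m)=V_1\cup\cdots\cup V_t$ with $t>1$ such that only two colors, say forming the pair $Q\in\binom{R}{2}$, appear between the parts, and the edges between any fixed pair $(V_i,V_j)$ use just one color from $Q$. Write $m_i=|V_i|$ and, for each $S\in\binom{R}{s}$, let $g_{S,i}=\g{F|_{V_i}}{S}$ denote the largest $S$-subchromatic set inside $V_i$.

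The key is to lower-bound each $\g{F}{S}$ in terms of the $g_{S,i}$. There are two cases for a given $S$. If $Q\subseteq S$, then the whole between-part coloring uses colors in $S$, so we may take the union of the largest $S$-subchromatic sets over all parts, giving $\g{F}{S}\ge\sum_i g_{S,i}$. If $Q\not\subseteq S$ — i.e. $|Q\cap S|\le 1$ — then at most one color of $Q$ is ``allowed,'' and the reduced graph on $[t]$ obtained by keeping only the edges colored by that allowed color (or no edges at all, if $Q\cap S=\emptyset$) contains some clique; in the worst case we can still always guarantee $\g{F}{S}\ge\max_i g_{S,i}$, which suffices for the weak bound. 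So in every case $\g{F}{S}\ge\sum_i g_{S,i}$ when $Q\subseteq S$, and $\g{F}{S}\ge\max_i g_{S,i}\ge\bigl(\sum_i g_{S,i}^{?}\bigr)$-type bounds otherwise; the cleanest uniform statement to feed into H\"older is to use $\sum_i g_{S,i}$ for the (at most one) pair $S\supseteq Q$ and $\max_i g_{S,i}$ for the rest. Since exactly $\binom{r-2}{s-2}$ of the sets $S\in\binom{R}{s}$ contain $Q$, and the remaining ones satisfy $\g{F}{S}\ge\max_i g_{S,i}\ge g_{S,i}$ for every $i$, we get
$$
\prod_{S\in\binom{R}{s}}\g{F}{S}\;\ge\;\Bigl(\prod_{S\supseteq Q}\sum_i g_{S,i}\Bigr)\cdot\prod_{S\not\supseteq Q}\max_i g_{S,i}.
$$

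Now I would apply Lemma \ref{holder} to the $\binom{r-2}{s-2}$ factors of the form $\sum_i g_{S,i}$: writing $k=\binom{r-2}{s-2}$, this product is at least $\bigl(\sum_i\prod_{S\supseteq Q}g_{S,i}^{1/k}\bigr)^{k}$. For the $\max$-factors, bound $\max_i g_{S,i}\ge g_{S,i}$ for the same index $i$ we will eventually single out. The idea is to combine these so that, for each part $i$, the contribution is at least $\prod_{S\in\binom{R}{s}}g_{S,i}$, which by the induction hypothesis applied to $V_i$ is at least $m_i^{\binom{r-2}{s-2}}$. More precisely, one shows
$$
\prod_{S\in\binom{R}{s}}\g{F}{S}\;\ge\;\sum_i\prod_{S\in\binom{R}{s}}g_{S,i}\;\ge\;\sum_i m_i^{\binom{r-2}{s-2}}\;\ge\;\Bigl(\sum_i m_i\Bigr)^{\binom{r-2}{s-2}}\cdot(\text{nothing lost since }t\ge 1)\ \text{— wait},
$$
and here is the one genuine subtlety: $\sum_i m_i^k \ge (\sum_i m_i)^k$ is false for $k>1$. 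So instead the argument must route the exponent through H\"older from the start rather than after summing: keep all $\binom{r}{s}$ products of sums/maxes symmetric by noting that for the weak bound it is enough to use $\g{F}{S}\ge\sum_i g_{S,i}$ only for $S\supseteq Q$ and, for the other $S$, to use a single fixed index — choosing the index $i^\ast$ maximizing $m_{i^\ast}$ (or maximizing $\prod_{S\not\supseteq Q}g_{S,i}$) — and then apply H\"older across the $k$ ``sum'' factors while the ``max'' factors are evaluated at $i^\ast$. This yields $\prod_S\g{F}{S}\ge \bigl(\sum_i g^{1/k}_{\cdot,i}\text{-stuff}\bigr)^k\cdot(\text{max stuff})$, and the induction hypothesis $\prod_S g_{S,i}\ge m_i^k$ lets each term telescope; the final elementary inequality needed is just $\bigl(\sum_i a_i^{1/k}\bigr)^k \ge \sum_i a_i$ when $a_i\ge 0$ and $k\ge 1$ (superadditivity of $x\mapsto x^{1/k}$), applied with $a_i=m_i^k$ to recover $(\sum_i m_i)^k = m^k$.

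The main obstacle, then, is purely bookkeeping: making sure the exponents line up so that after H\"older one is left with $\sum_i\prod_S g_{S,i}^{1/k}$ raised to the $k$-th power — i.e. ensuring the superadditivity step goes the right direction — and handling the asymmetry between the unique pair $S\supseteq Q$ (where we have a genuine sum $\sum_i g_{S,i}$) and the other $\binom{r}{s}-1$ pairs (where we only have a max). I expect the clean way is to first prove the identity $\g{F}{S}=\sum_i g_{S,i}$ for $S\supseteq Q$ and the bound $\g{F}{S}\ge\max_i g_{S,i}$ otherwise, then observe $\binom{r-2}{s-2}$ of the $s$-subsets contain $Q$, apply Lemma \ref{holder} to exactly those factors, and finish with the induction hypothesis and superadditivity of the $k$-th root; no case analysis on $r,s$ parity is needed since this is only the polylog-free bound.
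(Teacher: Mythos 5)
Your skeleton --- induction on the number of vertices, Gallai's structure theorem giving a nontrivial partition $V_1,\dots,V_t$ with a two-color pair $Q$ between parts, the bounds $\g{F}{S}\ge\sum_i g_{S,i}$ when $Q\subseteq S$ and $\g{F}{S}\ge\max_i g_{S,i}$ otherwise, and Lemma \ref{holder} applied to the $k=\binom{r-2}{s-2}$ factors with $Q\subseteq S$ --- is exactly the paper's, and you correctly flag that the naive finish via $\sum_i m_i^k\ge(\sum_i m_i)^k$ is false. But the step where you resolve this is where the proposal breaks down. You propose to evaluate the ``max'' factors at a single fixed index $i^\ast$ and then claim the induction hypothesis ``lets each term telescope.'' It does not: with $\prod_{S\not\supseteq Q}g_{S,i^\ast}$ pulled outside the $k$-th power, the $i$-th term of the H\"older sum is only $\prod_{S\supseteq Q}g_{S,i}^{1/k}$, and the induction hypothesis $\prod_S g_{S,i}\ge m_i^k$ gives no lower bound on this partial product (the factors $g_{S,i}$ with $S\not\supseteq Q$ may carry essentially all of $m_i^k$; e.g.\ a part monochromatic in a color outside $Q$). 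Your closing appeal to superadditivity with $a_i=m_i^k$ is likewise not the needed inequality --- what must be shown is that the H\"older sum itself is at least $\sum_i m_i$, and as set up that is unproved. (There are also small slips: the sets $S\supseteq Q$ number $\binom{r-2}{s-2}$, not one, so the ``unique pair / $\binom{r}{s}-1$ others'' accounting at the end is off.)

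The missing idea, as the paper executes it, is not to fix an index at all: keep the global quantity $\gc{S}$ for every $S\not\supseteq Q$, observe that $\prod_{S\not\supseteq Q}\gc{S}^{1/k}$ does not depend on $i$ and so can be distributed into each term of the H\"older sum, and only then apply $\gc{S}\ge g_{S,i}$ separately for each $i$. The $i$-th term then becomes $\prod_{S}g_{S,i}^{1/k}\ge m_i$ by induction, the sum is at least $\sum_i m_i=m$, and raising to the $k$-th power gives $m^k$; no superadditivity is needed. Your fixed-index variant can in fact be rescued with the second of your two suggested choices of $i^\ast$ (the one maximizing $\prod_{S\not\supseteq Q}g_{S,i}$): bound the $i$-th H\"older term below by $m_i\big/\bigl(\prod_{S\not\supseteq Q}g_{S,i^\ast}\bigr)^{1/k}$ via the induction hypothesis and let the outside factor cancel the denominator after taking the $k$-th power. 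But that division-and-cancellation argument is exactly what your write-up omits, so as it stands the proof is incomplete at its crucial step.
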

\begin{proof}
Take $\gc{S} = \g{F}{S}$. We proceed by induction on $n$. If $n=1$, then each $\gc{S}$ is $1$ as is their product, while $n^{r-2 \choose s-2}$ is also $1$. If $n>1$, we may find some pair of colors $Q$ and some non-trivial partition of the vertices $V_1,\ldots,V_t$ such that for each pair of distinct $i,j$ in $[t]$, there is a $q \in Q$ so that all of the edges between $V_i$ and $V_j$ have color $q$.

Define, for $i \in [t]$, $F_i$ to be the restriction of $F$ to $V_i$. Take $\g{i}{S}:=\g{F_i}{S}$. By induction, for each $i$ we have $\prod_S \g{i}{S} \geq n_i^{r-2 \choose s-2},$ where $n_i = \size{V_i}$.

Note that if $Q \subseteq S$ then $\gc{S} \geq \sum_i \g{i}{S}$, since we may combine the largest subchromatic sets from each $F_i$. For every $S$ we have $\gc{S} \geq \max_i \g{i}{S}$, so

$$\prod_S \gc{S} \geq \left(\prod_{S : Q \subseteq S} \sum_i \g{i}{S} \right)\prod_{S : Q \not \subseteq S} \gc{S} \geq \left(\sum_i \prod_{S : Q \subseteq S}\g{i}{S}^{1/{r-2 \choose s-2}}\right)^{r-2 \choose s-2}\prod_{S : Q \not \subseteq S} \gc{S} =$$ 
$$\left(\sum_i \prod_{S : Q \subseteq S}\g{i}{S}^{1/{r-2 \choose s-2}} \prod_{S : Q \not \subseteq S}\gc{S}^{1/{r-2 \choose s-2}}\right)^{r-2 \choose s-2} \geq \left(\sum_i \prod_S \g{i}{S}^{1/{r-2 \choose s-2}}\right)^{r-2 \choose s-2} \geq \left( \sum_i n_i \right)^{r-2 \choose s-2} = n^{r-2 \choose s-2},$$
where the first inequality is by $\gc{S} \geq \sum_i \g{i}{S}$ if $Q \subseteq S$, the second inequality is by the preceding lemma and noting $\size{S} = {r-2 \choose s-2}$, the third inequality is by $\gc{S} \geq \g{i}{S}$, and the fourth inequality is by the induction hypothesis.
\end{proof}

Note that in proving this bound, if $\size{S \cap Q} = 1$ we simply use $\gc{S} \geq \g{F_i}{S}$. As in the $r=3,s=2$ case, if we can find a set of indices $V_{i_1},\ldots,V_{i_k}$ so that between any two of them the edges use the color contained in $S \cap Q$, we may obtain a stronger lower bound on $\gc{S}$.

We now conclude the argument.

\begin{theorem}
In any Gallai $r$-coloring $F$ on $m$ vertices, there is some $S \in {R \choose s}$ so that $\g{F}{S} \geq m^{{s \choose 2}/{r \choose 2}}$
\end{theorem}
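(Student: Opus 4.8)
The plan is to derive this immediately from the previous theorem, which states that $\prod_{S \in \binom{R}{s}} \gc{S} \geq n^{\binom{r-2}{s-2}}$ for any Gallai $r$-coloring $F$ on $m = n$ vertices, where $\gc{S} = \g{F}{S}$. Since this is a lower bound on a product of $\binom{r}{s}$ nonnegative integers, at least one factor must be at least the $\binom{r}{s}$-th root of the right-hand side. Thus there exists $S \in \binom{R}{s}$ with
$$
\g{F}{S} \;\geq\; \left(n^{\binom{r-2}{s-2}}\right)^{1/\binom{r}{s}} \;=\; n^{\binom{r-2}{s-2}/\binom{r}{s}}.
$$

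The remaining step is the arithmetic identity $\binom{r-2}{s-2}/\binom{r}{s} = \binom{s}{2}/\binom{r}{2}$. I would verify this by expanding both sides in factorials: $\binom{r-2}{s-2} = \frac{(r-2)!}{(s-2)!(r-s)!}$ and $\binom{r}{s} = \frac{r!}{s!(r-s)!}$, so their ratio is $\frac{(r-2)!\,s!}{r!\,(s-2)!} = \frac{s(s-1)}{r(r-1)} = \binom{s}{2}/\binom{r}{2}$, as needed. (One should keep in mind the standing assumption $1 < s < r$ from the paper, which ensures all the binomial coefficients involved are positive and the statement is non-vacuous; the boundary cases $s=1$ and $s=r$ were already disposed of earlier.)

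I do not anticipate any genuine obstacle here — the content of the weak lower bound is entirely in the previous theorem, and this final statement is just the pigeonhole extraction of a single good $S$ together with a one-line binomial computation. If anything, the only thing to be careful about is matching the exponent $n^{\binom{r-2}{s-2}}$ in the cited theorem (which is stated with $n$, the current number of vertices) against the $m$ in the present statement; since the coloring $F$ is on $m$ vertices we simply take $n = m$.

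<br>

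Proof: By the previous theorem, $\prod_{S \in \binom{R}{s}} \g{F}{S} \geq m^{\binom{r-2}{s-2}}$. Since the product of the $\binom{r}{s}$ quantities $\g{F}{S}$ is at least $m^{\binom{r-2}{s-2}}$, there must be some $S \in \binom{R}{s}$ with $\g{F}{S} \geq m^{\binom{r-2}{s-2}/\binom{r}{s}}$. Finally,
$$
\frac{\binom{r-2}{s-2}}{\binom{r}{s}} = \frac{(r-2)!/((s-2)!(r-s)!)}{r!/(s!(r-s)!)} = \frac{s(s-1)}{r(r-1)} = \frac{\binom{s}{2}}{\binom{r}{2}},
$$
so $\g{F}{S} \geq m^{\binom{s}{2}/\binom{r}{2}}$, as desired. $\qed$
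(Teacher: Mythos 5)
Your proposal is correct and is exactly the paper's argument: apply the preceding product bound, extract one of the ${r \choose s}$ factors by the pigeonhole (geometric-mean) principle, and simplify ${r-2 \choose s-2}/{r \choose s}$ to ${s \choose 2}/{r \choose 2}$. The explicit factorial verification of the binomial identity is a harmless addition; the paper leaves it implicit.
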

\begin{proof}
By the previous theorem, $\prod_{S \in {R \choose s}} \g{F}{S} \geq m^{r-2 \choose s-2}$. As this is a product over ${r \choose s}$ numbers, there must be some $S$ with 
$$\g{F}{S} \geq m^{{r-2 \choose s-2}/{r \choose s}} = m^{{s \choose 2}/{r \choose 2}}.$$
\end{proof}

\section{Lower bound for many colors}\label{sectlower}
In this section we show that our upper bounds on sizes of subchromatic sets in Gallai colorings are tight up to constant factors (where we view $r$ and $s$ as constant).

\subsection{Discrepancy lemma in edge-weighted graphs} \label{sectweight}

The lemma in this subsection has the following form: either a given weighted graph has many edges of non-zero weight or it has some set $S$ of size $s$ whose weight is significantly larger than average. In the next subsection we will show how to reduce the problem of lower bounding the size of the largest subchromatic set in a Gallai $r$-coloring to a problem regarding the number of non-zero edges in a graph that doesn't contain vertex subsets $S$ whose weight is significantly larger than average, so this lemma will be useful.

\begin{lemma}\label{weightedgraphs}
Given weights $w_P$ for $P \in {R \choose 2}$ with $w_P \geq 0$, take $w = \sum_P w_P$. Take $a_0 = {r \choose 2}$ if $s < r-1$, $a_0 = r/2$ if $s=r-1$ and $r$ is even, and $a_0=(r+3)/2$ if $s=r-1$ and $r$ is odd. Either there are at least $a_0$ pairs $P$ with $w_P > 0$ or there is some $S \subseteq R$ of size $s$ satisfying 
$$\sum_{P \subseteq S} w_P \geq \left(1 + \left(4r {r \choose 2}^2\right)^{-1}\right)\frac{{s \choose 2}}{{r \choose 2}}w.$$
\end{lemma}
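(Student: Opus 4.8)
The plan is to assume that fewer than $a_0$ pairs have positive weight and, under this constraint, produce a set $S$ of size $s$ whose internal weight exceeds the average by the claimed multiplicative factor. Write $E = \{P \in \binom{R}{2} : w_P > 0\}$, so $|E| \leq a_0 - 1$. For a uniformly random $S \in \binom{R}{s}$, the expected internal weight is $\mathbb{E}\big[\sum_{P \subseteq S} w_P\big] = \frac{\binom{s}{2}}{\binom{r}{2}} w$, since each pair lies inside $S$ with probability $\binom{r-2}{s-2}/\binom{r}{s} = \binom{s}{2}/\binom{r}{2}$. So it suffices to show the internal weight is \emph{not} concentrated at the mean: if every $S$ had $\sum_{P \subseteq S} w_P$ within a $(1 + \eta)$ factor of the mean for $\eta = (4r\binom{r}{2}^2)^{-1}$, we derive a contradiction with $|E| \leq a_0 - 1$. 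The natural tool is a second moment / variance computation: I would compute $\mathrm{Var}\big(\sum_{P \subseteq S} w_P\big)$ over random $S$ and show it is bounded below by something like $c_r \cdot w^2 / a_0$ (or more precisely in terms of $\sum_P w_P^2$ and the pair-covariances), which forces some $S$ to deviate from the mean by at least a $\sqrt{\mathrm{Var}}$-type amount; the point is that with few nonzero edges, the weight cannot be spread evenly enough across all $\binom{r}{s}$ subsets to keep the variance small.

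The cleaner route, which I would actually pursue, avoids variance and argues directly about which vertices the edges of $E$ touch. Let $T \subseteq R$ be the set of vertices incident to some edge of $E$; since $|E| \leq a_0 - 1$, we have $|T| \leq 2(a_0-1)$. Checking the three cases: for $s < r-1$, $a_0 = \binom{r}{2}$, so $2(a_0-1) = r^2 - r - 2 < $ ... this is too large, so in that case I instead use the structural fact that with at most $\binom{r}{2}-1$ edges the complete graph $K_r$ is missing at least one edge $P_0 = \{x,y\}$, and more importantly the weight $w$ is distributed over at most $\binom{r}{2}-1$ edges, so by averaging some edge $P^* $ has $w_{P^*} \geq w/(\binom{r}{2}-1) > w/\binom{r}{2}$. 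The idea is then to greedily build $S$: start from $P^*$ and add vertices one at a time, at each step choosing the vertex that maximizes the additional internal weight picked up; compare the total to the random average. Because the weight is concentrated on few edges, a carefully chosen $S$ captures a strictly-larger-than-proportional share. For the delicate cases $s = r-1$: here $S$ omits exactly one vertex $v$, and $\sum_{P \subseteq S} w_P = w - \sum_{P \ni v} w_P$, so maximizing the internal weight of $S$ is the same as choosing $v$ to \emph{minimize} its weighted degree $d(v) = \sum_{P \ni v} w_P$. We have $\sum_v d(v) = 2w$, so $\min_v d(v) \leq 2w/r$, giving $\sum_{P \subseteq S} w_P \geq (1 - 2/r) w = \frac{\binom{r-1}{2}}{\binom{r}{2}} w$; I need the \emph{strict} improvement, which comes from the fact that if $d(v) = 2w/r$ for all $v$ then the support graph $E$ must be fairly dense/regular — but $|E| \leq a_0 - 1 = r/2 - 1$ (even case) or $(r+3)/2 - 1$ (odd case), far too few edges for a near-regular distribution on $r$ vertices, so some $v$ has $d(v) \leq (1 - \text{something})\cdot 2w/r$, and one chases the constant.

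The main obstacle I anticipate is getting the \emph{exact} constant $\big(4r\binom{r}{2}^2\big)^{-1}$ out of the argument rather than just some unspecified $\eta(r) > 0$ — this requires being careful in the averaging/greedy step about how much of an edge's weight one is guaranteed to capture, and in the $s=r-1$ case about quantifying exactly how far from regular a graph with $\leq a_0 - 1$ edges must be. A secondary subtlety is that the three cases for $a_0$ correspond precisely to the three cases in the upper-bound construction (Section~\ref{sectgeneralupperbound}), and the bound $a_0$ is exactly the number of positive-weight edges used there; so the lemma is tight and any slack in the argument in one case could break tightness. I would handle the cases separately: the $s < r-1$ case by the pigeonhole-plus-greedy argument above, and the two $s = r-1$ cases by the minimum-weighted-degree argument combined with an explicit lower bound on the irregularity of a graph with at most $a_0-1$ edges on $r$ vertices. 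The routine but lengthy constant-chasing I would relegate to an appendix in the style the paper already uses for ``the facts about $f$.''
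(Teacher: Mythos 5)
Your plan for the case $s=r-1$ is essentially the paper's: since an $(r-1)$-set is the complement of a vertex, one looks for a vertex of small weighted degree, and the paper's Lemma \ref{degrees} converts any deviation of a degree $d(v)$ from $2w/r$ (in either direction) into the required surplus. The even case is then almost immediate (with fewer than $r/2$ positive edges some vertex is isolated in the support), and the odd case is exactly where "one chases the constant'': the paper does this by passing to a support graph thresholded at weight $w/(4r^3)$, showing every vertex has degree at least one, finding a vertex on two edges of weight at least $w/r^2$, and deducing at least three vertices of degree two, hence at least $(r+3)/2$ edges. Your sketch gestures at the right quantity (irregularity of a graph with at most $a_0-1$ edges) but does not carry out this quantification, so even here the proof is not complete, though the route is sound.

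The genuine gap is in the main case $s<r-1$, where neither of your two proposed routes is actually executed and each faces a concrete obstacle. The greedy argument starting from an edge of weight at least $w/({r \choose 2}-1)$ is unsupported: the surplus of a single slightly-heavy edge can be diluted by below-average edges forced into the same $s$-set, and you give no potential or averaging analysis showing the greedily built set beats $\frac{{s \choose 2}}{{r \choose 2}}w$ by the specific factor $1+\left(4r{r \choose 2}^2\right)^{-1}$; you yourself flag this as the main obstacle, and it is precisely the content of the lemma. The fallback second moment over a uniformly random $S\in{R \choose s}$ has a directional problem you do not address: a variance lower bound only produces an $S$ whose weight deviates from the mean, and if that deviation is downward, converting it into an upward deviation by averaging over all ${r \choose s}$ sets loses a factor that can be exponential in $r$, ruining the polynomial constant; moreover the variance lower bound itself (in terms of how far the $w_P$ are from uniform) is a nontrivial covariance computation you have not done. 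The paper's missing idea is to apply the second moment not to $s$-sets but to $Y_Q=\sum_{P:P\cap Q\neq\emptyset}w_P$ for a uniformly random pair $Q\in{R \choose 2}$: one shows $\mathrm{Var}(Y_Q)\geq\mathrm{Var}(w_Q)\geq F^2/{r \choose 2}$ with $F=w/{r \choose 2}$ when some edge weight vanishes; a large $Y_Q$ is converted to a small $Y_{Q'}$ with only a factor about $r{r \choose 2}$ loss because there are only ${r \choose 2}$ values; a small $Y_Q$ directly gives the $(r-2)$-set $S=R\setminus Q$ with surplus; and a uniformly random $s$-subset of that $(r-2)$-set inherits a proportional surplus in expectation. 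Without this complement-of-a-pair plus random-restriction structure (or a genuinely worked-out substitute), your proposal does not prove the stated bound.
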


The proof of the above lemma uses elementary techniques along with the second moment method and is deferred to Appendix \ref{appendixB}.

\subsection{Proof of lower bound for many colors}\label{sectlowbound}

Let 
$$d=\frac{{r-2 \choose s-1}}{{r-2 \choose s-2}} = \frac{r-s}{s-1},$$
$$C=32r{r \choose 2}^3d,$$
$$\delta = \left(4{r-2 \choose s-2}C\right)^{-1},$$
$$\delta_0 = C^{-1}{r-2 \choose s-1}{r-2 \choose s-2}^{-1} \left({r \choose 2}+1\right)^{-1}=C^{-1}d \left({r \choose 2}+1\right)^{-1},$$
$$\delta_1 = 2^{-{r\choose 2}-2}\left(\delta_0^{-1}+1\right)^{-{r \choose 2}-1} {r-2 \choose s-1}^{-1},$$
$$c = (\delta/4)^2\delta_1^{1/d}.$$
$d$ is an appropriately chosen scaling factor; why it is appropriate will become evident later. $C$ should be thought of as a large constant, and $\delta,\delta_0,\delta_1,$ and $c$ should be thought of as small constants. We provide some bounds on the above; although we will not explicitly reference these, they are useful for verifying various inequalities:

$$r^{-1} \leq d \leq r,$$
$$C \leq 4r^8,$$
$$\delta \geq 2^{-8r},$$
$$\delta_0 \geq r^{-11}/4,$$
$$\delta_1 \geq 2^{-11r^3},$$
$$c \geq 2^{-12r^4}.$$

When we constructed the upper bound via product colorings, there was a weighted graph (namely the one used to construct the coloring) so that for any $S \subseteq R$ we could approximate the size of the largest clique using colors from $S$ by the product of the weights of edges contained in $S$. We wish to say that the structure of any Gallai coloring $F$ can be approximated this way. Though this is not true in general, the next theorem states that if it is not true then $\prod_S \g{F}{S}$ must be large. Take for the rest of this paper 
\begin{equation}\label{definitionm}
m_0 := 2^{2^{2^{2^{8r^2}}}}.
\end{equation}

\begin{theorem}
If $m \geq m_0$ then for any Gallai coloring $F$ on $n \leq m$ vertices, there are $f \geq 1$, $\epsilon \geq 0$, $\mathcal{P} \subseteq {R \choose 2}$, and, for $P \in \mathcal{P}$, weights $w_P \in [1,\infty)$ satisfying:
\begin{enumerate}
\item For every $S \in {R \choose s}$, $\g{F}{S} \geq \prod_{P \in {S \choose 2} \cap \mathcal{P}} w_P.$
\item $\prod_{P \in \mathcal{P}}w_P \geq m^{-\epsilon}n$.
\item $\prod_{S \in {R \choose s}} \g{F}{S} \geq (nf)^{r-2 \choose s-2}.$
\item $f \geq (\log m)^{C \epsilon}$.
\item Taking $a$ to be the size of $\mathcal{P}$, $f \geq \left(c\log^2 m\right)^{ad}$.
\end{enumerate}
\end{theorem}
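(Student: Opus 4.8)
The plan is to prove this by induction on $n$, following the same skeleton as the proof of Theorem~\ref{theoremr=3s=2} in Section~\ref{tightfor3} but with the conclusion recast in terms of the auxiliary data $(f,\epsilon,\mathcal{P},\{w_P\})$. For the base case, when $n$ is small enough that $f=1$, $\epsilon=0$, $\mathcal{P}=\emptyset$ work: items (1)--(2) are trivial, item (3) is exactly the weak lower bound $\prod_S \g{F}{S} \geq n^{\binom{r-2}{s-2}}$ proved in Section~\ref{sectweaklower}, and items (4)--(5) hold since $f=1$ and $a=0$. For the inductive step with $n > 1$, apply Gallai's structure theorem (Lemma~\ref{lemmagallai}) to get a nontrivial partition $V_1,\dots,V_t$ and a pair of colors $Q \in \binom{R}{2}$ controlling all edges between parts; by induction each restriction $F_i$ carries data $(f_i,\epsilon_i,\mathcal{P}_i,\{w^{(i)}_P\})$.

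The main work is to \emph{combine} these into global data. As in the three-color argument, I would split into cases according to whether one part $V_1$ is large (a constant fraction of the vertices) or all parts are small. In the ``large part'' case, roughly speaking we set $\mathcal{P} = \mathcal{P}_1$ and scale the weights by the factor $n/n_1$ on a single edge (the edge $Q$, which is the one that can be used between $U_1$ and the rest), bumping $\epsilon$ slightly; item (1) follows because $\g{F}{S} \geq \sum_i \g{F_i}{S}$ when $Q \subseteq S$ and the sum is dominated by $V_1$'s contribution, possibly augmented along $Q$; the numerical facts about $f$ (now playing the role of the ``facts about $f$'' lemma, but with the interval structure replaced by the clean bounds $f \geq (\log m)^{C\epsilon}$ and $f \geq (c\log^2 m)^{ad}$) let us verify items (3)--(5). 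In the ``all parts small'' case I would mimic Case~3 of Section~\ref{tightfor3}: dyadically bucket the parts by size into sets $\Phi(i)$, restrict to buckets that are simultaneously large and not too small (the complement having total size $\leq n/2$ by the same counting as before), and within a good bucket apply the weighted Ramsey theorem (Lemma in Section~\ref{sectweightrams}) to the weight vectors $(o_j,p_j)$-analogue — here the ``weights'' attached to index $j$ for a color-pair $P$ are the partial products $\prod_{P' \in \binom{S}{2}\cap \mathcal{P}_j, P' \ni \text{(appropriate color)}} w^{(j)}_{P'}$ — to extract the extra $\log^2$ factors. Crucially, this is exactly where Lemma~\ref{weightedgraphs} enters: when $B''$ (the bucket of excessively small parts) is large, we are \emph{not} immediately done as we were for $s=2$, but we can instead invoke the discrepancy lemma to find a set $S$ of size $s$ whose weighted ``mass'' $\sum_{P\subseteq S} w_P$ exceeds the average by a factor $1 + (4r\binom{r}{2}^2)^{-1}$, and this multiplicative excess is what feeds the growth of $f$ and $\epsilon$ while keeping $a$ (the number of nonzero edges) controlled — the two cases of $a_0$ in Lemma~\ref{weightedgraphs} matching the two exponents $c_{r,s}$ for $s<r-1$ versus $s=r-1$.

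I expect the main obstacle to be the bookkeeping that keeps items (4) and (5) compatible across the induction: every time we pass to a subpart we may increase $a = |\mathcal{P}|$ by one (adding the edge $Q$), which by item (5) demands that $f$ grow by a factor of roughly $(c\log^2 m)^d$, while item (4) demands that $\epsilon$ and $f$ stay in lockstep via $f \geq (\log m)^{C\epsilon}$ — so the constants $C, \delta, \delta_0, \delta_1, c$ must be chosen (as they are, at the start of Section~\ref{sectlowbound}) so that the gain from the weighted Ramsey theorem or from Lemma~\ref{weightedgraphs} always outpaces these demands. Verifying that the stated choices of constants actually close the induction — i.e., that the loss terms $nf(n) - \sum_i n_i f(n_i)$-analogues are absorbed by the AM--GM gains $2\sqrt{(n_1 f_1)(n_2 f_2)}$-analogues and by the logarithmic factors harvested in the sparse case — is routine in spirit but delicate in the exponents, and I would expect to defer the heaviest of these estimates to an appendix, as the paper does with the ``facts about $f$'' and with Lemma~\ref{weightedgraphs} itself.
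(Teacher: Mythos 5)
Your skeleton (induction on $n$, Gallai partition, base case, large-part versus small-part dichotomy, weighted Ramsey in the sparse case) matches the paper, but the proposal misses the one genuinely new idea this theorem requires and substitutes for it a misplaced use of Lemma~\ref{weightedgraphs}. In the paper, the discrepancy lemma is \emph{not} used anywhere inside this induction: it is applied only afterwards, to the weighted graph $(\mathcal{P},w_P)$ that this theorem outputs, to conclude that either $|\mathcal{P}|\geq a_0$ (so condition (5) kicks in) or some $S$ is heavy (so conditions (1)--(2) give a large $\g{F}{S}$ directly). Inside the induction, the critical case is the one you gloss over: when most of the vertex mass sits in parts of size at most $nm^{-\delta}$ (your ``$B''$ large''), the paper does not keep the structure of a single subpart; it partitions those small parts into classes on which $\epsilon_j$ is nearly constant, $\mathcal{P}_j$ is identical, and each $w_{P,j}$ lies in a fixed multiplicative window $m^{\delta_0}$, picks a class $G_a$ carrying a constant fraction of the vertices, and builds \emph{new} weights by $w_Q=\sum_{j\in G_a}w_{Q,j}$, $w_P=\min_{j\in G_a}w_{P,j}$ for $P\neq Q$, with $\mathcal{P}=\mathcal{P}_j\cup\{Q\}$. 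The summation along $Q$ is exactly what makes conditions (1)--(2) propagate, and the resulting increment of $a$ (hence the demand $f\geq(c\log^2 m)^{ad}$) is paid for by applying the weighted Ramsey theorem simultaneously to every pair $T,\pi(T)$ with $T\cap Q=\{Q_1\}$, $\pi(T)\cap Q=\{Q_2\}$, harvesting a factor $\left(\delta(\log m)/4\right)^{2d}$ because the small parts force $2^{(\tau-i)/2}\geq m^{\delta/4}$. Invoking the discrepancy lemma at this point, as you propose, does not produce any of the required data: a set $S$ whose weight exceeds the average says nothing about condition (3) and gives no recipe for $\mathcal{P}$, $w_P$, $\epsilon$, or $f$ for the current coloring.

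Two smaller points. In the large-part case the paper does \emph{not} rescale any weight: it keeps $\mathcal{P}=\mathcal{P}_i$, $w_P=w_{P,i}$ for a well-chosen $i$ and absorbs the loss into $\epsilon=\epsilon_i+\log(1/\alpha_i)/\log m$; your proposal to multiply $w_Q$ by $n/n_1$ would break condition (1), since one only knows $\g{F}{S}\geq\sum_i \g{F_i}{S}$, which need not exceed $(n/n_1)\prod_{P\subseteq S}w_{P,1}$ when the other parts carry negligible structure. Also, your base case ``$n$ small enough that $f=1$'' with $\epsilon=0$ and $\mathcal{P}=\emptyset$ violates condition (2) (which then reads $1\geq n$) for any $n>1$; the paper's base case is exactly $n=1$, and all larger $n$ must go through the inductive machinery.
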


From the above theorem we will quickly be able to conclude Theorem \ref{firsttheorem}. Note that if $f$ is large enough then by condition (3) we conclude that $\prod_{S \in {R \choose s}} \g{F}{S}$ is large and so some $\g{F}{S}$ is large. Otherwise, by condition (4) we have an upper bound on the size of $\epsilon$, so by conditions (1) and (2) the structure of the coloring is well-approximated by the $w_P$. This latter case will allow us to apply our work on weighted graphs from the previous subsection to get a lower bound on $a$, and then we will apply condition (5) to as before conclude that some $\g{F}{S}$ is large.

\begin{proof}
We will write $\gc{S}$ for $\g{F}{S}$. We will take $w_P = 1$ for any $P$ in ${R \choose 2}$ but not in $\mathcal{P}$; this way, for any $T \subseteq {R \choose 2}$, we have $\prod_{P \in T \cap \mathcal{P}}w_P = \prod_{P \in T} w_P$.

We proceed by induction on $n$.

\noindent {\bf Base Case:}
If $n=1$, then we may take $f=1,\epsilon=0$, and $\mathcal{P} = \emptyset$. Letting $a = \size{P} = 0$,

\begin{enumerate}
\item For every $S \in {R \choose s}$, $\gc{S} = 1 = \prod_{P \in {S \choose 2}}w_P$.
\item $\prod_{P \in {R \choose 2}}w_P = 1 = m^{-\epsilon}n$.
\item $\prod_{S \in {R \choose s}} \gc{S} = 1 = (nf)^{r-2 \choose s-2}$.
\item $f = 1 = (\log m)^{C \epsilon}$.
\item $f = 1 = \left(c \log^2m\right)^{ad}$.
\end{enumerate}

\noindent {\bf Preliminary Discussion:}
If $n>1$, there is some pair of colors $Q = \{Q_1,Q_2\}$ and there is a non-trivial partition $V(K_n)=V_1 \cup \ldots \cup V_t$ 
with $|V_1| \geq \ldots \geq |V_t|$ such that there is some $2$-coloring $\chi : {t \choose 2} \rightarrow Q$ such that for every distinct $i,j \in [t]$ and $u \in V_i$, $v \in V_j$, the color under $F$ of $\{u,v\}$ is $\chi(i,j)$ (which is in $Q$).

Given $\epsilon > 0$, define $f_\epsilon(\ell) := (\log m)^{C\frac{\log(\alpha m^{\epsilon})}{\log m}}$, where $\alpha = \ell/n$. Note that we may rewrite $f_\epsilon(\ell)=(\log m)^{C\epsilon + C\frac{\log\alpha}{\log m}}$; we will move between the two expressions freely. Note also that $f_\epsilon(\ell)$ is an increasing function of $\ell$. We will need some lemmas about $f_\epsilon$, all of which are formalizations of the statement ``$f_\epsilon$ does not grow too quickly".

\begin{lemma}\label{factsaboutfepsilon}
The following statements hold about $f_\epsilon$ for every choice of $\epsilon \geq 0$, $m \geq m_0$, and $1 < n \leq m$.
\begin{enumerate}
\item For any $\alpha\in [\frac{1}{n},1]$, 
$$f_\epsilon(\alpha n) \geq \alpha^{1/\left(2 {r-2 \choose s-2}\right)} f_\epsilon(n).$$ 
In particular, $f_\epsilon(\alpha n) \geq \alpha f_\epsilon(n)$.
\item For any $\alpha_1,\alpha_2,\alpha_3 \in [\frac{1}{n},1]$ with $\alpha_1+\alpha_2+\alpha_3=1$, taking $n_i = \alpha_i n$,
$$nf_\epsilon(n) \leq \sum_i n_if_\epsilon(n_i) + 3(\log^{-3/4}m)nf_\epsilon(n).$$
\item For $i \geq 0$ and $m^{\delta} \geq 2^j \geq 1$ we have $f_\epsilon(2^i)\log^{2/{r-2 \choose s-2}}((\log^{1/4}m)2^j) \geq 256{r \choose 2} f_\epsilon(2^{i+2j})$.
\item For any $\alpha \geq \log^{-1}m$, $f_\epsilon(\alpha n) \geq f_\epsilon(n)/2$.
\end{enumerate}
\end{lemma}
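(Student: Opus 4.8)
The plan is to verify each of the four statements by direct manipulation of the closed-form expression $f_\epsilon(\ell) = (\log m)^{C\epsilon + C\frac{\log \alpha}{\log m}}$, where $\alpha = \ell/n$, exploiting that $m \geq m_0$ is enormous (so $\log m$, $\log\log m$, etc.\ are all huge) and that $C$, $d$, $\delta$ are fixed constants depending only on $r$ and $s$. The unifying idea is that $f_\epsilon$ is a power of $\log m$ whose exponent moves by $O\!\left(\frac{1}{\log m}\right)$ as $\ell$ changes by a constant factor, so multiplicative changes in $\ell$ produce only $(\log m)^{O(1/\log m)} = 1 + o(1)$ changes in $f_\epsilon$ — unless the change in $\ell$ is as large as a power of $m$, in which case we get a genuine polynomial-in-$\log m$ factor, but still controlled.

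First I would dispatch statement (1): writing $f_\epsilon(\alpha n)/f_\epsilon(n) = (\log m)^{C\frac{\log\alpha}{\log m}} = \alpha^{C/\log m}$, and since $\alpha \le 1$ and $C/\log m \le 1/\bigl(2\binom{r-2}{s-2}\bigr)$ for $m \ge m_0$ (here $m_0$ is a tower of height four in $8r^2$, so $\log m$ dwarfs $2C\binom{r-2}{s-2}$), we get $f_\epsilon(\alpha n) = \alpha^{C/\log m} f_\epsilon(n) \ge \alpha^{1/(2\binom{r-2}{s-2})} f_\epsilon(n) \ge \alpha f_\epsilon(n)$, using $\alpha \le 1$ again for the last step. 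Statement (4) is essentially the same computation with $\alpha \ge \log^{-1}m$: then $\alpha^{C/\log m} \ge (\log^{-1}m)^{C/\log m} = (\log m)^{-C/\log m} = 2^{-C\log\log m/\log m} \ge 1/2$ since $C\log\log m/\log m \le 1$ for $m \ge m_0$. Statement (3) is of the same flavor but requires a little care: dividing, $\frac{f_\epsilon(2^i)}{f_\epsilon(2^{i+2j})} = (\log m)^{-2Cj/\log m} = 2^{-2Cj\log\log m/\log m}$; since $2^j \le m^\delta$ we have $j \le \delta \log m$, so the exponent is at least $-2C\delta\log\log m \ge -\log\log m$ (as $2C\delta$ is a small constant), giving a factor $\ge (\log m)^{-1}$, whereas $\log^{2/\binom{r-2}{s-2}}\!\bigl((\log^{1/4}m)2^j\bigr) \ge \log^{2/\binom{r-2}{s-2}}(\log^{1/4}m) = \bigl(\tfrac14\log\log m\bigr)^{2/\binom{r-2}{s-2}}$, which for $m \ge m_0$ comfortably exceeds $256\binom{r}{2}\log m$ — so I would just check that $(\tfrac14\log\log m)^{2/\binom{r-2}{s-2}} \ge 256\binom{r}{2}\log m$ holds because $m_0$ is a height-four tower, making $\log\log m$ itself a tower of height two and hence astronomically larger than any fixed power of $\log m$ times a constant. (I should double-check the exact inequality chaining here, since it is the one place where the tower height of $m_0$ is genuinely used rather than merely convenient.)

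Statement (2) is the only one with real content and is the step I expect to be the main obstacle. Setting $n_i = \alpha_i n$ with $\sum \alpha_i = 1$, I want $\sum_i n_i f_\epsilon(n_i) \ge n f_\epsilon(n)\bigl(1 - 3\log^{-3/4}m\bigr)$, i.e.\ $\sum_i \alpha_i \frac{f_\epsilon(\alpha_i n)}{f_\epsilon(n)} = \sum_i \alpha_i \cdot \alpha_i^{C/\log m} = \sum_i \alpha_i^{1 + C/\log m} \ge 1 - 3\log^{-3/4}m$. The concern is that when some $\alpha_i$ is tiny (as small as $1/n$, hence down to $1/m$), the factor $\alpha_i^{C/\log m} = m^{-O(1/\log m)}$ is still bounded below by a constant, but the issue is really the concavity-type loss $\sum \alpha_i^{1+\eta}$ versus $\sum\alpha_i = 1$ for small $\eta = C/\log m$. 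I would argue: for each $i$, $\alpha_i^{1+\eta} = \alpha_i \cdot \alpha_i^\eta \ge \alpha_i(1 + \eta\log\alpha_i) = \alpha_i - \eta\,\alpha_i\log(1/\alpha_i)$ is the wrong direction, so instead I use $\alpha_i^\eta \ge 1 - \eta\log(1/\alpha_i) \ge 1 - \eta\log n$, valid since $\alpha_i \ge 1/n$, giving $\sum_i \alpha_i^{1+\eta} \ge \sum_i \alpha_i(1 - \eta\log n) = 1 - \eta\log n = 1 - C\frac{\log n}{\log m} \ge 1 - C$, which is not good enough. The fix is to split the three terms by size: at most one $\alpha_i$ can be $< \log^{-3/4}m$ — no, up to two can. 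For the small ones, $\alpha_i \le \log^{-3/4}m$ contributes at least $0$ and costs at most $\alpha_i \le \log^{-3/4}m$ each in the deficit; for the large ones, $\alpha_i > \log^{-3/4}m$ means $\log(1/\alpha_i) < \tfrac34\log\log m$, so $\alpha_i^\eta \ge 1 - \eta\cdot\tfrac34\log\log m = 1 - \tfrac{3C}{4}\frac{\log\log m}{\log m}$, and summing, $\sum_{\text{large}} \alpha_i^{1+\eta} \ge \bigl(\sum_{\text{large}}\alpha_i\bigr)\bigl(1 - \tfrac{3C}{4}\frac{\log\log m}{\log m}\bigr) \ge 1 - 2\log^{-3/4}m - \tfrac{3C}{4}\frac{\log\log m}{\log m} \ge 1 - 3\log^{-3/4}m$ for $m \ge m_0$, since $\frac{\log\log m}{\log m} \ll \log^{-3/4}m$. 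Rearranging gives statement (2). The delicate bookkeeping — correctly counting how many $\alpha_i$ can be small and bounding their total contribution to the deficit — is where I would be most careful, but it is elementary; no new ideas beyond the Taylor bound $x^\eta \ge 1 + \eta\log x$ for $x \in (0,1]$, $\eta > 0$, are needed.
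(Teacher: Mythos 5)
There is a genuine gap, and it is concentrated in your treatment of statement (3), with a smaller systematic slip elsewhere. The slip: from $f_\epsilon(\alpha n)/f_\epsilon(n)=(\log m)^{C\log\alpha/\log m}$ you repeatedly conclude this equals $\alpha^{C/\log m}$, but $(\log m)^{x}=2^{x\log\log m}$, so the correct identity is $\alpha^{C\log\log m/\log m}$. For statements (1), (2) and (4) this costs you a factor $\log\log m$ in the exponent but the conclusions survive once corrected, since $m\ge m_0$ still gives $C\log\log m/\log m\le 1/\bigl(2\binom{r-2}{s-2}\bigr)$ and $C(\log\log m)^2\le\log m$; in particular your small/large split for (2), using $x^{\eta}\ge 1+\eta\log x$ and the fact that at most two $\alpha_i$ lie below $\log^{-3/4}m$, is a perfectly sound and arguably cleaner alternative to the paper's route (the paper instead uses convexity of $\ell f_\epsilon(\ell)$ and Jensen's inequality, comparing against $f_\epsilon(n/4)$). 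The real failure is in (3): after correctly writing $f_\epsilon(2^{i+2j})/f_\epsilon(2^i)=2^{2Cj\log\log m/\log m}\le(\log m)^{2C\delta}$, you discard the $2^j$ inside the logarithm and try to verify $\bigl(\tfrac14\log\log m\bigr)^{2/\binom{r-2}{s-2}}\ge 256\binom{r}{2}\log m$. This is false for every large $m$: since $\binom{r-2}{s-2}\ge 1$, the left side is at most $(\log\log m)^2$, which is always far smaller than $\log m$ — your assertion that a tower-sized $m_0$ makes $\log\log m$ ``astronomically larger than any fixed power of $\log m$'' is backwards, as $\log\log m<\log m$ no matter how large $m$ is. Making $m_0$ bigger cannot repair this.

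The missing idea is that the inequality in (3) is a trade-off in $j$ and the term $2^j$ must be kept: writing $j'=j+\tfrac14\log\log m$, you need $\tfrac{2}{\binom{r-2}{s-2}}\log j' \ge 2Cj'\tfrac{\log\log m}{\log m}+\log\bigl(256\binom{r}{2}\bigr)$. For small $j$ the left side is of order $\log\log\log m$ and beats the constant right side; for $j$ near $\delta\log m$ the left side is of order $\log\log m$, i.e. $\log^{2/\binom{r-2}{s-2}}(\cdot)$ is a power of $\log m$ with exponent $2/\binom{r-2}{s-2}$, which beats $(\log m)^{2C\delta}=(\log m)^{1/(2\binom{r-2}{s-2})}$ precisely because $\delta$ was chosen so that $2C\delta=1/\bigl(2\binom{r-2}{s-2}\bigr)$. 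The paper handles the intermediate range by noting that the difference of the two sides, as a function of $j'$, has a monotone decreasing derivative, so its minimum occurs at one of the two endpoints, and it then checks both endpoints using $m\ge m_0$. Some argument of this type (endpoint checking via monotonicity or convexity, or another way of interpolating in $j$) is needed; uniformly lower-bounding the logarithmic gain by its value at $j=0$, as you do, cannot work.
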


We will refer to the above collectively as the facts about $f_\epsilon$; we prove them in Appendix \ref{appendixC}.

The proof will split into four cases.

\noindent {\bf Cases 1 and 2, Preliminary Discussion:}
For these cases, a simple numerical claim will be useful.

\begin{claim}\label{exponentiationlemma}
For positive reals $a,b$ with $a \leq 1$,
$$(1+a)^b \geq 1+ab/2.$$
\end{claim}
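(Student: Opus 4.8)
The statement to prove is Claim~\ref{exponentiationlemma}: for positive reals $a,b$ with $a \leq 1$, we have $(1+a)^b \geq 1+ab/2$.

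\medskip\noindent
\textbf{The plan.} The natural approach is to separate into the cases $b \geq 1$ and $b < 1$, since for $b \geq 1$ one expects $(1+a)^b \geq 1+ab$ (a clean Bernoulli-type bound, hence certainly $\geq 1+ab/2$), while for $b<1$ the exponent damps the growth and one only gets the factor $1/2$ back on a range where $a$ is not too small. First I would handle $b\geq 1$: here $(1+a)^b \geq 1 + ba \geq 1 + ab/2$ by the standard Bernoulli inequality $(1+x)^b \geq 1+bx$ valid for $x \geq -1$ and $b \geq 1$, which needs $a \geq -1$, true since $a>0$. For $0 < b < 1$, Bernoulli goes the wrong way, so instead I would use convexity of $t \mapsto (1+a)^t$ (equivalently, of the exponential): on the interval $t\in[0,1]$ the chord from $(0,1)$ to $(1,1+a)$ lies \emph{below} the curve? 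No --- it lies above, since the function is convex. That is the wrong direction too. The correct tool for $b<1$ is that $t\mapsto(1+a)^t$ being convex means it lies \emph{above} its tangent line at $t=0$, whose slope is $\ln(1+a)$; so $(1+a)^b \geq 1 + b\ln(1+a)$, and then it suffices to show $\ln(1+a) \geq a/2$ for $a\in(0,1]$. This last inequality is classical: the function $\ln(1+a) - a/2$ is zero at $a=0$, has derivative $\frac{1}{1+a} - \frac12 = \frac{1-a}{2(1+a)} \geq 0$ on $[0,1]$, hence is nonnegative on $[0,1]$; at $a=1$ it equals $\ln 2 - 1/2 > 0$.

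\medskip\noindent
\textbf{Assembling the two cases.} So the proof is: if $b \geq 1$, apply Bernoulli to get $(1+a)^b \geq 1+ab \geq 1+ab/2$; if $0<b<1$, use convexity/tangent-line bound to get $(1+a)^b \geq 1 + b\ln(1+a)$, then the elementary calculus inequality $\ln(1+a)\geq a/2$ for $a\in(0,1]$ finishes it. Both cases deliver $(1+a)^b \geq 1+ab/2$. Alternatively, one can give a single unified argument avoiding the case split: since $(1+a)^b = e^{b\ln(1+a)} \geq 1 + b\ln(1+a)$ (from $e^x \geq 1+x$, valid for all real $x$), and since $\ln(1+a) \geq a/2$ whenever $a \in (0,1]$ (wait --- one must check this covers the $b\geq 1$ case too, and indeed it does, because the bound $1+b\ln(1+a) \geq 1 + ab/2$ holds for all $b>0$ once $\ln(1+a)\geq a/2$). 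This one-line version is cleaner and I would probably present it: $(1+a)^b = e^{b\ln(1+a)} \geq 1 + b\ln(1+a) \geq 1 + ab/2$, where the first inequality is $e^x \geq 1+x$ and the second is $\ln(1+a) \geq a/2$ for $0 < a \leq 1$.

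\medskip\noindent
\textbf{Main obstacle.} There is essentially no obstacle here; this is a routine elementary inequality. The only thing that requires a moment of care is getting the direction of the convexity/tangent bound right (the tangent line at $0$ is below the convex curve, the chord is above it), and verifying $\ln(1+a) \geq a/2$ on the closed interval $(0,1]$ via the sign of the derivative $\frac{1-a}{2(1+a)}$. I would present the slick one-line proof: for $0<a\le 1$, one has $\ln(1+a)\ge a/2$ (since $g(a):=\ln(1+a)-a/2$ satisfies $g(0)=0$ and $g'(a)=\frac{1-a}{2(1+a)}\ge 0$), whence for any $b>0$,
\[
(1+a)^b = e^{b\ln(1+a)} \ge 1 + b\ln(1+a) \ge 1 + \frac{ab}{2},
\]
using $e^x \ge 1+x$. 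This completes the proof of the claim.
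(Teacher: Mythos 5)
Your final one-line argument is correct and is essentially the paper's proof: the paper writes $1+a \geq e^{a/2}$ for $0 \leq a \leq 1$ (which is exactly your $\ln(1+a)\geq a/2$) and then $(1+a)^b \geq e^{ab/2} \geq 1+ab/2$ via $e^x \geq 1+x$, the same two ingredients you use in slightly different order. The preliminary case-split discussion is unnecessary but harmless.
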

\begin{proof}
Since $0 \leq a \leq 1$ we have $1+a \geq e^{a/2}$. Then
$$(1+a)^b \geq e^{ab/2} \geq 1+ab/2.$$
\end{proof}

Cases 1 and 2 will be those cases in which $V_1$ is large.

Take $U_1 = V_1$. Take $U_2$ to be the union of the $V_j$ such that the edges between $V_1$ and $V_j$ are of color $Q_1$. Take $U_3$ to be the union of the $V_j$ such that the edges between $V_1$ and $V_j$ are of color $Q_2$. We may assume without loss of generality that $\size{U_2} \geq \size{U_3}$. Then $\{U_1,U_2,U_3\}$ is a partition of $V$.

Define, for $i =1,2,3$, $F_i$ to be the restriction of $F$ to $U_i$. Take $\g{i}{S}:=\g{F_i}{S}$. Define $n_i = \size{U_i}$ and $\alpha_i = n_i/n$. By the induction hypothesis, for each $F_i$ there are appropriate choices of $f_i,\epsilon_i,\mathcal{P}_i,$ and $w_{P,i}$. Take $a_i = \size{\mathcal{P}_i}$.

The general approach for these cases as well as for Case 3 will be to choose some index $i$ and simply use the same graph to approximate our coloring. That is, we will take $\mathcal{P} = \mathcal{P}_i$ and $w_P = w_{P,i}$, and then we will show that $\epsilon$ and $f$ may be chosen appropriately.

We now proceed: if for some index $i$ we take $\mathcal{P} = \mathcal{P}_i$ and $w_P = w_{P,i}$, since $\gc{S} \geq \g{i}{S}$ we will have property (1): for every $S \in {R \choose s}$, $\gc{S} \geq \g{i}{S} \geq \prod_{P \subseteq S} w_{P,i} = \prod_{P \subseteq S} w_P$. Furthermore,

$$\prod_{P \in {R \choose 2}} w_P = \prod_{P \in {R \choose 2}} w_{P,i} \geq m^{-\epsilon_i}n_i = m^{-\epsilon_i}\alpha_in = m^{-\epsilon_i - \log(1/\alpha_i)/\log m}n.$$

If we take $\epsilon = \epsilon_i + \frac{\log(1/\alpha_i)}{\log m}$, then the above shows property (2) will hold.

Define
$$x_i := \max\left((\log m)^{C\left(\epsilon_i + \frac{\log(1/\alpha_i)}{\log m}\right)},(c\log^2m)^{a_id}\right).$$

If $i$ is the index minimizing $x_i$, then we will take:
$$\epsilon = \epsilon_i + \frac{\log(1/\alpha_i)}{\log m},$$
$$\mathcal{P}=\mathcal{P}_i,$$
$$w_P = w_{P,i},$$
and $f = x_i$; we will show that this satisfies properties (4) and (5), so choosing $i$ to minimize $x_i$ minimizes our $f$. Take $a = \size{\mathcal{P}}$. We have already observed that properties (1) and (2) will hold.

Take $\epsilon' = \max\left(\epsilon,\frac{\log((c\log^2 m)^{ad})}{C\log\log m}\right)$. Note that $f = x_i = (\log m)^{C \epsilon'} = f_{\epsilon'}(n)$. In this case properties (4) and (5) hold by choice of $f$:
$$f \geq (\log m)^{C \epsilon'} \geq (\log m)^{C \epsilon}.$$
$$f \geq (\log m)^{C \epsilon'} \geq (\log m)^{C\frac{\log((c\log^2 m)^{ad})}{C\log\log m}} = 2^{\log((c\log^2 m)^{ad})} = (c\log^2 m)^{ad}.$$

We have only to show that, with this choice of $f$, property (3) holds. We claim that each $f_i$ satisfies $f_i \geq f_{\epsilon'}(n_i)$.

If for some $i$ we have $\epsilon_i < \epsilon' + \frac{\log\alpha_i}{\log m}$, then we must have $(c \log^2m)^{a_id} \geq f = (\log m)^{C\epsilon'}$, for otherwise we would have $x_i < f$, contradicting our choice of $\epsilon$. Therefore, for such an index $i$,
$$f_i \geq (c\log^2m)^{a_id} \geq (\log m)^{C\epsilon'} = f_{\epsilon'}(n) \geq f_{\epsilon'}(n_i).$$
Otherwise, $\epsilon_i \geq \epsilon' + \frac{\log \alpha_i}{\log m}$ so
$$f_i \geq (\log m)^{C \epsilon_i} \geq (\log m)^{C\left(\epsilon' + \frac{\log\alpha_i}{\log m}\right)} = f_{\epsilon'}(n_i).$$

We have that for each $S$ satisfying $Q \subseteq S$ that $\gc{S} \geq \g{1}{S}+\g{2}{S}+\g{3}{S}$. For each $S$ satisfying $Q_1 \in S$, we have 
$$\gc{S} \geq \max(\g{1}{S}+\g{2}{S},\g{3}{S}) \geq \g{1}{S}+\g{2}{S}.$$ Similarly, if $Q_2 \in S$ then $\gc{S} \geq \g{1}{S}+\g{3}{S}.$ Finally, for all $S$ we have $\gc{S} \geq \max_i \g{i}{S}$.

We have by the generalization of H\"older's inequality (Lemma \ref{holder}):

$$\prod_S \gc{S} \geq \prod_{S : Q \subseteq S} \sum_i \g{i}{S} \prod_{S : Q \not \subseteq S} \gc{S} \geq \left(\sum_i \left(\prod_{S : Q \subseteq S}\g{i}{S} \prod_{S : Q \not \subseteq S} \gc{S} \right)^{1/{r-2 \choose s-2}}\right)^{r-2 \choose s-2}.$$

Therefore, we need only check that

$$\sum_i \left(\prod_{S : Q \subseteq S}\g{i}{S} \prod_{S : Q \not \subseteq S} \gc{S} \right)^{1/{r-2 \choose s-2}} \geq nf.$$

Fix $T \in {R \choose s}$ so that $T \cap Q = \{Q_1\}$. We get $\gc{T} \geq \g{1}{T} + \g{2}{T}$.

\noindent {\bf Case 1:} $\alpha_1, \alpha_2 \geq \log^{-1/2}m$.
The argument from the weak lower bound case applied here only gives:
$$\sum_i \left(\prod_{S : Q \subseteq S}\g{i}{S} \prod_{S : Q \not \subseteq S} \gc{S} \right)^{1/{r-2 \choose s-2}} \geq \sum_i n_if_{\epsilon'}(n_i).$$
The main idea behind solving this case is to observe that it is sufficient to gain a constant factor on either the largest or second largest term of the above sum.

Consider:
$$\sum_i \left(\prod_{S : Q \subseteq S} \g{i}{S} \prod_{S : Q \not \subseteq S} \gc{S}\right)^{1/{r-2 \choose s-2}} \geq$$ 
$$\left((\g{1}{T}+\g{2}{T})\prod_{S \neq T} \g{1}{S} \right)^{1/{r-2 \choose s-2}} + \left((\g{1}{T}+\g{2}{T})\prod_{S \neq T} \g{2}{S} \right)^{1/{r-2 \choose s-2}} + \left(\prod_S \g{3}{S}\right)^{1/{r-2 \choose s-2}}.$$

We'll handle the case $\g{1}{T} \leq \g{2}{T}$; the case where $\g{1}{T} \geq \g{2}{T}$ has a symmetric argument. Then, since $\g{1}{T} + \g{2}{T} \geq 2 \g{1}{T}$, the previous is at least:

\begin{eqnarray*}
& & 2^{1/{r-2 \choose s-2}}\prod_S  \g{1}{S}^{1/{r-2 \choose s-2}} + \prod_S \g{2}{S}^{1/{r-2 \choose s-2}} + \prod_S \g{3}{S}^{1/{r-2 \choose s-2}} 
\\ & = & \sum_i \prod_S \g{i}{S}^{1/{r-2 \choose s-2}} + \left(2^{1/{r-2 \choose s-2}}-1\right)\prod_S \g{1}{S}^{1/{r-2 \choose s-2}} \\ 
& \geq & \sum_i n_if_i + ((1+1)^{1/{r-2 \choose s-2}}-1)n_1f_1 \\
& \geq & \sum_i n_if_{\epsilon'}(n_i) + \left(2{r-2 \choose s-2}\right)^{-1}n_1f_{\epsilon'}(n_1) \\ 
& \geq & \sum_i n_if_{\epsilon'}(n_i) + \left(2{r-2 \choose s-2}\right)^{-1}(\log^{-1/2}m)nf_{\epsilon'}((\log^{-1/2}m)n) \\
& \geq & \sum_i n_if_{\epsilon'}(n_i) + \left(4{r-2 \choose s-2}\right)^{-1}(\log^{-1/2}m)nf_{\epsilon'}(n) \\
& \geq & \sum_i n_if_{\epsilon'}(n_i) + 3(\log^{-3/4}m)nf_{\epsilon'}(n) \geq nf_{\epsilon'}(n) = nf,
\end{eqnarray*}
where the first follows from the induction hypothesis, the second follows from Claim \ref{exponentiationlemma}, the third follows from the lower bound on $\alpha_1$, the fourth follows from the fourth fact about $f_{\epsilon'}$, the fifth follows from $m \geq m_0$, and the sixth follows from the second fact about $f_{\epsilon'}$.

\noindent {\bf Case 2:} $\log^{-1/2}m \geq \alpha_2$.
In this case we have $\alpha_1 = 1-(\alpha_2+\alpha_3) \geq 1-2\alpha_2 \geq 1-2\log^{-1/2}m \geq 3/4$.

Again, the argument from the weak lower bound only gives:
$$\sum_i \left(\prod_{S : Q \subseteq S}\g{i}{S} \prod_{S : Q \not \subseteq S} \gc{S} \right)^{1/{r-2 \choose s-2}} \geq \sum_i n_if_{\epsilon'}(n_i).$$
The main idea behind this case is to observe that it is sufficient to gain either a factor of $(1+8\alpha_2)$ on the first term (which is much larger than the others) or a factor of $4 \alpha_2^{-1/\left(2{r -2 \choose s-2}\right)}$ on the second term. We will do the former if $\g{2}{T}/\g{1}{T}$ is large enough, and otherwise we may accomplish the latter.

If $\g{2}{T} \geq 16{r-2 \choose s-2}\alpha_2\g{1}{T}$, we have

$$\sum_i\left(\prod_{S : Q \subseteq S} \g{i}{S}\prod_{S : Q \not \subseteq S} \gc{S}\right)^{1/{r-2 \choose s-2}} \geq \left(\prod_{S : Q \subseteq S} \g{1}{S}\prod_{S : Q \not \subseteq S} \gc{S}\right)^{1/{r-2 \choose s-2}} \geq$$ 
$$(\g{1}{T} + \g{2}{T})^{1/{r-2 \choose s-2}}\prod_{S \neq T} \g{1}{S}^{1/{r-2 \choose s-2}} \geq \left(1+16{r-2 \choose s-2}\alpha_2\right)^{1/{r-2 \choose s-2}}\prod_S \g{1}{S}^{1/{r-2 \choose s-2}} \geq$$ 
$$\left(1+\frac{16{r-2 \choose s-2}\alpha_2}{2 {r-2 \choose s-2}}\right)n_1f_1 = n_1f_1 + 8 \alpha_2 n_1f_1,$$
where the last inequality is by Claim \ref{exponentiationlemma}.

We know $f_i \geq f_{\epsilon'}(n_i)$, so the above is at least:
$$n_1f_{\epsilon'}(n_1) + 8 \alpha_2 n_1f_{\epsilon'}(n_1) \geq \alpha_1^2nf_{\epsilon'}(n) + 8 \alpha_2 \alpha_1^2 nf_{\epsilon'}(n) \geq$$ 
$$(1-2\alpha_2)^2nf_{\epsilon'}(n) + 4\alpha_2 nf_{\epsilon'}(n) > nf_{\epsilon'}(n) = nf,$$
where the first inequality follows from the first fact about $f_{\epsilon'}$ and the second inequality from substituting lower bounds on $\alpha_1$.

Otherwise, we have $\g{2}{T} \leq 16{r-2 \choose s-2} \alpha_2 \g{1}{T}$, so

$$\sum_i\left(\prod_{S : Q \subseteq S} \g{i}{S}\prod_{S : Q \not \subseteq S} \gc{S}\right)^{1/{r-2 \choose s-2}} \geq \prod_S \g{1}{S}^{1/{r-2 \choose s-2}} + \left((\g{1}{T} + \g{2}{T})\prod_{S \neq T} \g{2}{S}\right)^{1/{r-2 \choose s-2}}.$$

Then the latter term is at least:

$$\left(\g{1}{T} \prod_{S \neq T} \g{2}{S}\right)^{1/{r-2 \choose s-2}} \geq \left(\frac{1}{16{r-2 \choose s-2} \alpha_2} \prod_S \g{2}{S}\right)^{1/{r-2 \choose s-2}} \geq$$
$$\left(\frac{1}{16{r-2 \choose s-2} \alpha_2}\right)^{1/{r-2 \choose s-2}}n_2f_2 \geq 4\alpha_2^{-1/\left(2{r-2 \choose s-2}\right)}n_2f_2 \geq 4\alpha_2^{-1/\left(2{r-2 \choose s-2}\right)}n_2f_{\epsilon'}(n_2) \geq $$
$$4\alpha_2^{-1/\left(2{r-2 \choose s-2}\right)}n_2 \left(\alpha_2^{1/\left(2{r-2 \choose s-2}\right)} f_{\epsilon'}(n)\right) = 4 n_2f_{\epsilon'}(n),$$
where the third inequality follows from the upper bound on $\alpha_2$ and from $m \geq m_0$ and the fifth inequality from the first fact about $f_{\epsilon'}$.

Therefore,
$$\sum_i\left(\prod_{S : Q \subseteq S} \g{i}{S}\prod_{S : Q \not \subseteq S} \gc{S}\right)^{1/{r-2 \choose s-2}} \geq \prod_S \g{1}{S}^{1/{r-2 \choose s-2}} + 4n_2f_{\epsilon'}(n) \geq n_1f_{\epsilon'}(n_1) + 4n_2f_{\epsilon'}(n) \geq$$
$$\alpha_1^2 nf_{\epsilon'}(n) + 4\alpha_2 n f_{\epsilon'}(n) \geq (1-2 \alpha_2)^2 nf_{\epsilon'}(n) + 4 \alpha_2 n f_{\epsilon'}(n) \geq n f_{\epsilon'}(n) = nf,$$
where the third inequality follows from the first fact about $f_{\epsilon'}$.

\noindent{\bf Cases 3 and 4, Preliminary Discussion:} 
These will be the cases in which none of the $V_i$ are large. For these cases, we will take $n_i = \size{V_i}$ and $\alpha_i = n_i/n$. We will take $F_i$ to be $F$ restricted to $V_i$ and take $\g{i}{S} = \g{F_i}{S}$. By induction, for each $F_i$ there are appropriate choices of $f_i,\epsilon_i, \mathcal{P}_i,$ and $w_{P,i}$. Take $a_i = \size{\mathcal{P}_i}$.

Since in these cases we have many indices, we will be able to apply the weighted Ramsey's theorem to appropriately selected subsets of them. The rest of the preliminary discussion for Cases 3 and 4 is based on doing so.

For each non-negative integer $i$ let $I_i :=[2^i,2^{i+1}]$. Take $\Phi(i) = \{j : n_j \in I_i\}$. The $\Phi(i)$ form a dyadic partition of the indices which will eventually determine how the indices are clustered when we apply the weighted Ramsey's theorem. Take $B'' := \{i \leq \log(nm^{-\delta})\}$.

Take $\tau = \lfloor \log(2(\log^{-1/2}m)n) \rfloor$, so $\max_i n_i' \leq (\log^{-1/2}m)n \leq 2^\tau \leq 2(\log^{-1/2}m)n$ and $\Phi(i)$ is empty for $i > \tau$.

For any pair $T,T'$ satisfying $Q \cap T = \{Q_1\}$ and $Q \cap T' = \{Q_2\}$, take $g_i=\prod_{S \not \in \{T,T'\}}\g{i}{S},o_i = \g{i}{T},p_i=\g{i}{T'}$ and $g=\prod_{S \not \in \{T,T'\}} \gc{S},o=\gc{T},p=\gc{T'}$. Take $G_{T,T'}$ to be the set of indices $j$ with $op \geq o_jp_j \log^2((\log^{1/4}m)2^{(\tau-i)/2})/32$ where $i$ is such that $j \in \Phi(i)$.

$G_{T,T'}$ is the collection of indices $j$ for which $\gc{T}\gc{T'}$ is substantially larger than $\g{j}{T}\g{j}{T'}$, where the meaning of ``substantially larger" depends on the size of $n_j$. The following lemma states that almost all vertices are contained in some $V_j$ as $j$ varies over the indices of $G_{T,T'}$.

\begin{claim}
$\sum_{j \in G_{T,T'}} n_j \geq (1-\delta_1)n$.
\end{claim}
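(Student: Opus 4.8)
The plan is to bound the total weight $\sum_j n_j$ over indices $j \notin G_{T,T'}$ and show it is at most $\delta_1 n$. The key is a union-bound–style decomposition by the dyadic level: write the "bad" set as $\bigcup_{i \le \tau} \big(\Phi(i) \setminus G_{T,T'}\big)$, and control each piece separately. First I would fix a level $i$ and analyze the subset $\Phi(i) \setminus G_{T,T'}$, i.e.\ the indices $j \in \Phi(i)$ for which $op < o_jp_j\log^2((\log^{1/4}m)2^{(\tau-i)/2})/32$. The point is that the weighted Ramsey's theorem, applied to the multiset of weights $\{(o_j,p_j) : j \in \Phi(i)\}$ under the $2$-coloring $\chi$ restricted to these indices, produces a $Q_1$-clique $S$ and a $Q_2$-clique $U$ with $\big(\sum_{s\in S} o_s\big)\big(\sum_{u\in U}p_u\big) \ge \frac{\gamma}{32}\log^2(|\Phi(i)|)$, where $\gamma = \min_{j\in\Phi(i)} o_jp_j$; since the color $Q_1$-clique gives a lower bound $o \ge \sum_{s\in S}o_s$ and similarly $p \ge \sum_{u\in U}p_u$, this forces $op \ge \frac{\gamma}{32}\log^2|\Phi(i)|$. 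Hence if $\Phi(i)$ is too large — specifically if $\log^2|\Phi(i)|$ exceeds $\log^2((\log^{1/4}m)2^{(\tau-i)/2})$, i.e.\ $|\Phi(i)| \ge (\log^{1/4}m)2^{(\tau-i)/2}$ — then for the index $j_0 \in \Phi(i)$ achieving the minimum $o_{j_0}p_{j_0} = \gamma$ we would get $op \ge o_{j_0}p_{j_0}\log^2((\log^{1/4}m)2^{(\tau-i)/2})/32$, so $j_0 \in G_{T,T'}$, and more usefully one can iterate/peel to show that all but few indices of $\Phi(i)$ lie in $G_{T,T'}$. So on each level the indices outside $G_{T,T'}$ number fewer than roughly $(\log^{1/4}m)2^{(\tau-i)/2}$.

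Next I would sum the contributions. Each $j \in \Phi(i)$ has $n_j \le 2^{i+1}$, so
$$\sum_{i\le\tau} \sum_{\substack{j\in\Phi(i)\\ j\notin G_{T,T'}}} n_j \;\le\; \sum_{i\le\tau} 2^{i+1}\cdot (\log^{1/4}m)\,2^{(\tau-i)/2} \;=\; 2(\log^{1/4}m)\,2^{\tau/2}\sum_{i\le\tau} 2^{i/2},$$
and the geometric series $\sum_{i\le\tau}2^{i/2} \le \frac{2^{(\tau+1)/2}}{2^{1/2}-1}$ is $O(2^{\tau/2})$. This gives a bound of $O\big((\log^{1/4}m)\,2^{\tau}\big)$, and since $2^{\tau} \le 2(\log^{-1/2}m)n$ by definition of $\tau$, the whole sum is $O\big((\log^{-1/4}m)\,n\big)$, which is $\le \delta_1 n$ once $m \ge m_0$ (here $\delta_1$ is a constant depending only on $r,s$, and $m_0$ is a tower far exceeding any fixed power). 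Combining, $\sum_{j\notin G_{T,T'}} n_j \le \delta_1 n$, hence $\sum_{j\in G_{T,T'}} n_j \ge (1-\delta_1)n$, as desired.

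I expect the main obstacle to be the peeling argument on a single level $\Phi(i)$: a direct application of the weighted Ramsey's theorem only certifies that \emph{one} index (the minimizer of $o_jp_j$) lands in $G_{T,T'}$, whereas we need all but $\approx (\log^{1/4}m)2^{(\tau-i)/2}$ of them. The fix is to sort $\Phi(i)$ by the product $o_jp_j$ and repeatedly apply weighted Ramsey to the top portion: as long as more than $(\log^{1/4}m)2^{(\tau-i)/2}$ indices remain, the smallest surviving product $\gamma'$ still satisfies $op \ge \frac{\gamma'}{32}\log^2((\log^{1/4}m)2^{(\tau-i)/2})$, so that index (and by monotonicity everything above it) belongs to $G_{T,T'}$; we remove it and recurse. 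One also has to be slightly careful that $|\Phi(i)| \ge M = 2^{16}$ is needed to invoke the weighted Ramsey's theorem, but levels with $|\Phi(i)| < M \le (\log^{1/4}m)2^{(\tau-i)/2}$ (which holds for $m \ge m_0$ and the relevant range of $i$) contribute only $O(M\cdot 2^{i+1})$ and are absorbed into the same geometric-series estimate. The excluded range $i \in B''$ (the very small levels) plays no role here since those $n_j$ are tiny; if needed one simply notes $\sum_{i\in B''}\sum_{j\in\Phi(i)} n_j$ is also $O((\log^{-1/4}m)n)$ by a crude count, or folds $B''$ into the bound directly.
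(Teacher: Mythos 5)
Your proof is correct and follows essentially the paper's route: within each dyadic level $\Phi(i)$ you apply the weighted Ramsey theorem to the indices with the largest products $o_jp_j$ (the paper uses a single application to the top $\delta_1/4$-fraction above the quantile $\beta_i$, while you peel off one minimizer at a time), conclude that all but at most $(\log^{1/4}m)2^{(\tau-i)/2}$ indices per level can lie outside $G_{T,T'}$, and finish with the same geometric series, which indeed gives $O\left((\log^{-1/4}m)n\right)\le \delta_1 n$ for $m\ge m_0$. Two incidental slips, neither load-bearing: the parenthetical ``by monotonicity everything above it'' should say everything with \emph{smaller} $o_jp_j$ (membership in $G_{T,T'}$ gets easier as $o_jp_j$ decreases, and your peeling only needs the minimizer each round anyway), and the fallback remark that $\sum_{i\in B''}\sum_{j\in\Phi(i)}n_j = O\left((\log^{-1/4}m)n\right)$ is false in general (in Case 4 this sum is at least $n/2$), but it is irrelevant since the claim and your main argument treat the $B''$ levels like any other level.
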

\begin{proof}
Take $\{V_i'\}_{i \leq t}$ to be a reordering of $\{V_i\}_{i \leq t}$ so that if $i \leq j$ then $\g{V_i'}{T}\g{V_i'}{T'} \leq \g{V_j'}{T}\g{V_j'}{T'}$. That is, the $V_i'$ are in increasing order based on the value of $\g{V_i'}{T}\g{V_i'}{T'}$. Let $g_j',o_j',p_j',\Phi(i)',n_j',G_{T,T'}'$ be defined as before (so, for example, $o_i' = \g{V_i'}{T}$).

When we count, we wish to omit certain intervals that do not satisfy desired properties. Let
$$B:=\{i \leq \tau : \size{\Phi(i)'} \leq 4\delta_1^{-1} (\log^{1/4}m) 2^{(\tau-i)/2}\}.$$
We will show that a large fraction of the vertices are not contained in $\bigcup_{i \in B} \cup_{j \in \Phi(i)'} V_j'$; that is, most vertices are not contained in $V_j'$ where $j$ is an index in $\phi(i)'$ for some $i \in B$.

$$\sum_{i \in B} \sum_{j \in \Phi(i)'} n_j' \leq \sum_{i \leq \tau} 2^{i+1}(4 \delta_1^{-1}(\log^{1/4}m) 2^{(\tau-i)/2}) = 8\delta_1^{-1}(\log^{1/4}m)2^{\tau/2}\sum_{i \leq \tau} 2^{i/2} \leq $$
$$8\delta_1^{-1}(\log^{1/4}m)2^{\tau/2}4 \cdot 2^{\tau/2} = 32\delta_1^{-1}(\log^{1/4}m)2^\tau \leq 64\delta_1^{-1} (\log^{-1/4}m)n \leq \delta_1 n/2.$$

Thus,

\begin{equation}
\label{fewbad}
\sum_{i \in B} \sum_{j \in \Phi(i)'}n_j' \leq \delta_1n/2
\end{equation}

For any fixed $i \leq \tau$ such that $i \not \in B$, enumerate $\Phi(i)'$ as $\phi_{i,1},\phi_{i,2},\ldots,\phi_{i,\size{\Phi(i)'}}$ with $o_{\phi_{i,j_1}}'p_{\phi_{i,j_1}}' \leq o_{\phi_{i,j_2}}'p_{\phi_{i,j_2}}'$ if $j_1 \leq j_2$. That is, this enumeration is so that the $V_{\phi_{i,j}}'$ are listed in increasing order with respect to their $o_j'p_j'$ values. Take $\beta_i$ to be $\phi_{i,\left(1-\delta_1/4\right)\size{\Phi(i)'}}$. Consider $\{(o_j',p_j') : j \in \Phi(i)', j \geq \beta_i\}$. By $i \not \in B$, this has at least $(\log^{1/4}m)2^{(\tau-i)/2} \geq M$ elements. We get by applying the weighted Ramsey's theorem to this set (with the coloring given by $\chi$) that:
$$op \geq o_{\beta_i}'p_{\beta_i}'\frac{\log^2\left((\log^{1/4}m)2^{(\tau-i)/2}\right)}{32}.$$
For any $j \in \Phi(i)'$ we have that if $j \leq \beta_i$ then $o_j'p_j' \leq o_{\beta_i}'p_{\beta_i}'$, so the above is at least $o_j'p_j' \frac{\log^2\left((\log^{1/4}m)2^{(\tau-i)/2}\right)}{32},$ so $j \in G_{T,T'}'$.

If $i \not \in B$ we get:

$$\sum_{j \in \Phi(i)' : j \not \in G_{T,T'}'} n_j' \leq \sum_{j \in \Phi(i)' : j > \beta_i} n_j' \leq \frac{\delta_1}{4}\size{\Phi(i)'}2^{i+1} =$$ 
$$\size{\Phi(i)'}2^i\delta_1/2 \leq \frac{\delta_1}{2}\sum_{j \in \Phi(i)'}n_j'.$$
Therefore,
$$\sum_{j \in \Phi(i)' \cap G_{T,T'}'} n_j' \geq \left(1-\delta_1/2\right)\sum_{j \in \Phi(i)'}n_j'.$$

Thus,
$$\sum_{j \in G_{T,T'}'} n_j' \geq \sum_{i \not \in B} \sum_{j \in \Phi(i)' \cap G_{T,T'}'} n_j' \geq \left(1-\delta_1/2\right) \sum_{i \not \in B} \sum_{j \in \Phi(i)'}n_j' \geq$$
$$\left(1-\delta_1/2\right)^2 n \geq (1-\delta_1)n,$$
where the third inequality follows from \eqref{fewbad}.

As $\sum_{j \in G_{T,T'}}n_j' = \sum_{j \in G_{T,T'}}n_j$, this completes the proof of the claim.

\end{proof}

\noindent{\bf Case 3:} $\alpha_1 \leq \log^{-1/2} m$ and $\sum_{i \in B''}\sum_{j \in \Phi(i)} n_j \leq n/2$.
Fix any pair $T,T' \in {R \choose s}$ with $T \cap Q = Q_1$ and $T' \cap Q = Q_2$. The idea behind this case will be to choose some subset $G_a$ of the indices so that as $j$ varies over $G_a$ the value of $a_j$ doesn't change, to intersect this set with $G_{T,T'}$, and to use this to show that $\prod_S \gc{S}$ is large. In this case, as in Cases 1 and 2, we will simply choose some $j$ appropriately and take $\mathcal{P} = \mathcal{P}_j$ and $w_P = w_{P,j}$.

Note that $s_i \in \left[{r \choose 2}\right]$, so by the pigeonhole principle there must be some value $s$ so that 
$$\sum_{i \not \in B''} \sum_{j \in \Phi(i) : s_i = s}n_j \geq {r \choose 2}^{-1} \sum_{i \not \in B''} \sum_{j \in \Phi(i)} n_j \geq {r \choose 2}^{-1}n/2,$$
where the last inequality follows by the assumptions for Case 3. Take $G_a$ to be the set of indices $j$ with $a_j=a$ and, taking $i$ to be the index with $j \in \Phi(i)$, $i$ is not in $B''$; by the above, $\sum_{j \in G_a} n_j \geq {r \choose 2}^{-1}n/2$. Then take $\epsilon = \min_{i \in G_a} \epsilon_i + \frac{\log(1/\alpha_i)}{\log m}$. Note that this is the same as taking $\epsilon = \epsilon_i + \frac{\log(1/\alpha_i)}{\log m}$ where $i$ is an index in $G_a$ minimizing
$$x_i := \max\left((\log m)^{C\left(\epsilon_i + \frac{\log(1/\alpha_i)}{\log m}\right)},(c\log^2m)^{a_id}\right),$$
as all the $a_i$ are equal to $a$.

Take, with $i$ as above, $\mathcal{P}=\mathcal{P}_i$ and $w_P = w_{P,i}$. Then, as in Cases 1 and 2, properties (1) and (2) hold. Furthermore, as in Cases 1 and 2, taking 
$$\epsilon' = \max\left(\epsilon,\frac{\log\left((c\log^2m)^{ad}\right)}{C\log\log m}\right),$$
we have for $i \in G_a$ that $f_i \geq f_{\epsilon'}(n_i)$, and taking $f = f_{\epsilon'}(n)$, properties (4) and (5) hold. We need only check that property (3) holds.

Then take $G = G_a \cap G_{T,T'}$.

We have 
\begin{equation} \label{goodbig3}\sum_{j \in G} n_j \geq n - \sum_{j \not \in G_a} n_j - \sum_{j \not \in G_{T,T'}} n_j \geq \left({r \choose 2}^{-1}/2 - \delta_1\right)n \geq \left({r \choose 2}^{-1}/4\right)n.\end{equation}
Take $g_i=\prod_{S \not \in \{T,T'\}}\g{i}{S},o_i = \g{i}{T},p_i=\g{i}{T'}$ and $g=\prod_{S \not \in \{T,T'\}} \gc{S},o=\gc{T},p=\gc{T'}$.

We have:
\begin{eqnarray*}
\sum_j (g_jop)^{1/{r-2 \choose s-2}} & \geq & \sum_i \sum_{j \in G \cap \Phi(i)} (g_jop)^{1/{r-2 \choose s-2}} \\
& \geq & \sum_i\sum_{j \in G \cap \Phi(i)} \left( g_jo_jp_j \frac{\log^2((\log^{1/4}m)2^{(\tau-i)/2})}{32} \right)^{1/{r-2 \choose s-2}} \\
& \geq & \sum_i \sum_{j \in G \cap \Phi(i)}n_jf_j\log^{2/{r-2 \choose s-2}}((\log^{1/4}m)2^{(\tau-i)/2})32^{-1/{r-2 \choose s-2}}\\
& \geq & \sum_i \sum_{j \in G \cap \Phi(i)}n_jf_{\epsilon'}(n_j)\log^{2/{r-2 \choose s-2}}((\log^{1/4}m)2^{(\tau-i)/2})32^{-1/{r-2 \choose s-2}} \\
& \geq & \sum_i \sum_{j \in G \cap \Phi(i)}n_jf_{\epsilon'}(2^i)\log^{2/{r-2 \choose s-2}}((\log^{1/4}m)2^{(\tau-i)/2})32^{-1} \\
& \geq & \sum_i \sum_{j \in G \cap \Phi(i)} 8{r \choose 2}n_j f_{\epsilon'}(2^\tau) \\
& \geq & \sum_i \sum_{j \in G \cap \Phi(i)} 8{r \choose 2}n_j f_{\epsilon'}(n\log^{-1/2}m) \\
& \geq & \sum_i \sum_{j \in G \cap \Phi(i)} 4{r \choose 2}n_j f_{\epsilon'}(n) \geq nf_{\epsilon'}(n) = nf,
\end{eqnarray*}
where the second inequality follows from $G \subseteq G_{T,T'}$, the sixth inequality is by the third fact about $f_{\epsilon'}$, the eighth inequality is by the fourth fact about $f_{\epsilon'}$, and the ninth inequality inequality follows from \eqref{goodbig3}.

Note that here we used only one pair $T,T'$; we can afford to do this because we gain a large amount due to $B''$ being small. In the next case, we will use all of the relevant pairs.

\noindent{\bf Case 4:} $\alpha_1 \leq \log^{-1/2}m$ and $\sum_{i \in B''} \sum_{j \in \Phi(i)} n_j \geq n/2$. In this case there are many vertices contained in very small parts; this is the case where we will not simply take $\mathcal{P}$ to be some $\mathcal{P}_i$.

The idea behind this case is to choose a set $G_a$ of many indices $j$ with similar values of $\mathcal{P}_j,w_{P,j}$, and $\epsilon_j$. We will be able to take $w_Q = \sum_{j \in G_a} w_{Q,j}$, which is a significant improvement over simply taking for some $j$ each $w_P = w_{P,j}$. We will intersect $G_a$ with some collection of $G_{T,T'}$ where each $T$ with $T \cap Q = \{Q_1\}$ and each $T'$ with $T' \cap Q = \{Q_2\}$ will occur exactly once (so we pair up the sets $T,T'$ with $T \cap Q = \{Q_1\}$ and $T' \cap Q = \{Q_2\}$; if an index is in $G_{T,T'}$ then we have gained a large factor on that index). This allows us to lower bound $\prod_S \gc{S}$.

We may partition $[0,1]$ into at most $\delta_0^{-1}+1$ intervals $J_i$ of length at most $\delta_0$. Furthermore, we may partition $[1,m]$ into at most $\delta_0^{-1}+1$ intervals $H_i$ with $\sup(H_i)/\inf(H_i) \leq m^{\delta_0}$.

We partition the indices in $\bigcup_{i \in B''}\Phi(i)$ by saying two indices $j,j'$ are in the same part if and only if $\epsilon_j,\epsilon_{j'}$ are in the same interval $J_i$, $\mathcal{P}_j=\mathcal{P}_{j'}$, and for each $P \in \mathcal{P}_j=\mathcal{P}_{j'}$, $w_{P,j}$ and $w_{P,j'}$ are in the same interval $H_i$.

Then the total number of possible partitions is at most $2^{r \choose 2}(\delta_0^{-1}+1)^{{r \choose 2}+1}$. Therefore, there is some part $G_a \subseteq \bigcup_{i \in B''}\Phi(i)$ where 
$$\sum_{j \in G_a} n_j \geq 2^{-{r \choose 2}}(\delta_0^{-1}+1)^{-{r \choose 2}-1} \sum_{i \in B''} \sum_{j \in \Phi(i)} n_j \geq 2^{-{r \choose 2}-1}(\delta_0^{-1}+1)^{-{r \choose 2}-1}n = 2{r-2 \choose s-1}\delta_1 n.$$

Then take $\epsilon_0 = \max_{i \in G_a} \epsilon_i$. Take $w_Q = \sum_{i \in G_a} w_{Q,i}$ and for $P \neq Q$ take $w_P = \min_{i \in G_a} w_{P,i}$. Take $\mathcal{P} = \mathcal{P}_i \cup \{Q\}$ for any $i \in G_a$. Take $a = \size{\mathcal{P}}$ and note $a \leq a_i + 1$ for any $i \in G_a$. We check that property (1) holds. For each $S$ with $Q \subseteq S$, 

$$\gc{S} \geq \sum_i \g{i}{S} \geq \sum_i \prod_{P \subseteq S} w_{P,i} \geq \sum_{i \in G_a} w_{Q,i}\prod_{P \subseteq S, P \neq Q}\min_{i \in G_a} w_{P,i} = w_Q \prod_{P \subseteq S, P \neq Q} w_P = \prod_{P \subseteq S} w_P.$$

If $Q \not \subseteq S$ then, fixing any $i \in G_a$,

$$\gc{S} \geq \g{i}{S} \geq \prod_{P \subseteq S} w_{P,i} \geq \prod_{P \subseteq S} \min_{i \in G} w_{P,i} = \prod_{P \subseteq S} w_P.$$

Now, we choose $\epsilon = {r \choose 2}\delta_0 + \epsilon_0$ and check that property (2) holds:

$$\prod_P w_P = \sum_{i \in G_a} w_{Q,i} \prod_{P \neq Q} w_P \geq \sum_{i \in G_a} w_{Q,i} \prod_{P \neq Q} (m^{-\delta_0}w_{P,i}) =$$ 
$$m^{-\left({r \choose 2}-1\right)\delta_0}\sum_{i \in G_a} \prod_P w_{P,i} \geq m^{-\left({r \choose 2}-1\right)\delta_0-\epsilon_0}\sum_{i \in G_a} n_i \geq$$ 
$$m^{-\left({r \choose 2}-1\right)\delta_0 - \epsilon_0} 2 {r-2 \choose s-1} \delta_1 n \geq m^{-{r \choose 2}\delta_0-\epsilon_0}n,$$
where the first inequality is valid since for $j,j' \in G_s$ we have $w_{P,j}/w_{P,j'} \leq m^{\delta_0}$, the second inequality follows by choice of $\epsilon_0 = \max_{j \in G_a} \epsilon_j$, and the last inequality uses $m^{\delta_0} \geq 2{r-2 \choose s-1}\delta_1$ which follows from $m \geq m_0$ ($m_0$ is defined in Equation \ref{definitionm}) and the choices of $\delta_0$ and $\delta_1$.

Fix a bijection $\pi$ between $\{S \in {R \choose s} : S \cap Q = \{Q_1\}\}$ and $\{S \in {R \choose s} : S \cap Q = \{Q_2\}\}$ (one such bijection takes any $S$ in the first set and removes $Q_1$ and adds $Q_2$.) Take $G$ to be the intersection of $G_a$ and all sets of the form $G_{S,\pi(S)}$ where $S \in {R \choose s}$ satisfies $S \cap Q = \{Q_1\}$. There are ${r-2 \choose s-1}$ pairs $S,\pi(S)$, so by Claim \ref{fewbad} we have
$$\sum_{j \in G} n_j \geq \sum_{j \in G_a} n_j - \sum_{S : S \cap Q = \{Q_1\}}\sum_{j \not \in G_{S,\pi(S)}}n_j \geq \left(2{r-2 \choose s-1}\delta_1 - {r-2 \choose s-1}\delta_1\right) n \geq {r-2 \choose s-1} \delta_1n \geq \delta_1 n.$$

Note that if $j \in G$ then for any $S$ with $S \cap Q = \{Q_1\}$, since $G \subseteq G_{S,\pi(S)}$, we have $\gc{S}\gc{\pi(S)}\geq \g{j}{S}\g{j}{\pi(S)}\log^2((\log^{1/4}m)2^{(\tau-i)/2}) \geq \g{j}{S}\g{j}{\pi(S)}\log^2(2^{(\tau-i)/2})$ where $i$ is such that $j \in \Phi(i)$. However, if $j \in G$ then $i \in B''$, so

$$2^{(\tau-i)/2} = \left(\frac{2^\tau}{2^i}\right)^{1/2} \geq \left(\frac{n\log^{-1/2}m}{nm^{-\delta}}\right)^{1/2} \geq m^{\delta/4}.$$

Therefore, $\log(2^{(\tau-i)/2}) \geq \delta(\log m)/4$.

This gives:

$$\sum_j \left(\prod_{S : Q \subseteq S} \g{j}{S} \prod_{S : Q \not \subseteq S} \gc{S}\right)^{1/{r-2 \choose s-2}} \geq \sum_{j \in G} \left(\prod_{S : Q \subseteq S} \g{j}{S} \prod_{S : Q \not \subseteq S} \gc{S}\right)^{1/{r-2 \choose s-2}} \geq $$
$$\sum_{j \in G} \left(\prod_{S : \size{Q \cap S} \neq 1} \g{j}{S} \prod_{S : \size{Q \cap S}=1} \gc{S}\right)^{1/{r-2 \choose s-2}} = \sum_{j \in G} \left(\prod_{S : \size{Q \cap S} \neq 1} \g{j}{S} \prod_{S : Q \cap S = \{Q_1\}} \gc{S}\gc{\pi(S)}\right)^{1/{r-2 \choose s-2}} \geq $$
$$\sum_{j \in G} \left(\prod_{S : \size{Q \cap S} \neq 1} \g{j}{S} \prod_{S : Q \cap S = \{Q_1\}} \frac{\delta^2}{16}(\log^2m)\g{j}{S}\g{j}{\pi(S)}\right)^{1/{r-2 \choose s-2}} =
\sum_{j \in G} \left(\left(\delta(\log m)/16\right)^{2{r-2 \choose s-1}} \prod_S \g{j}{S}\right)^{1/{r-2 \choose s-2}} \geq$$
$$\sum_{j \in G} n_jf_j\left(\delta(\log m)/4\right)^{2{r-2 \choose s-1}/{r-2 \choose s-2}} = \sum_{j \in G} n_jf_j\left(\delta(\log m)/4\right)^{2d}.$$

Take $f' = (\log m)^{C \epsilon}$, $f'' = (c \log^2 m)^{ad}$ and $f = \max(f',f'')$. Note that $f \geq f'$ guarantees that property (4) holds and $f \geq f''$ guarantees that property (5) holds, so we need only check that property (3) holds. There will be two cases, that in which $f=f'$ and that in which $f=f''$.

If $f=f'$, for each $j \in G_a$ we have $\epsilon_j \geq \epsilon_0 - \delta_0$, so $f_j \geq (\log m)^{C(\epsilon_0-\delta_0)}$. Then we get:

$$\sum_{j \in G} n_jf_j\left(\delta(\log m)/4\right)^{2d} \geq \sum_{j \in G} n_j(\log m)^{C(\epsilon_0-\delta_0)}\left(\delta(\log m)/4\right)^{2d} \geq$$
$$\sum_{j \in G} n_j(\log m)^{C(\epsilon_0+{r \choose 2}\delta_0)}\left(\delta(\log^{1/2}m)/4\right)^{2d} \geq$$
$$\delta_1 n(\log m)^{C(\epsilon_0+{r \choose 2}\delta_0)}\left(\delta(\log^{1/2}m)/4\right)^{2d} \geq$$
$$n (\log m)^{C \left(\epsilon_0 + {r \choose 2}\delta_0\right)} = n (\log m)^{C \epsilon} = nf' = nf.$$

Otherwise, $f=f''$. For each $j \in G_a$ we have $f_j \geq (c \log^2m)^{(a-1)d}$, as $a \leq a_j+1$. This gives:

$$\sum_{j \in G} n_jf_j\left(\delta(\log m)/4\right)^{2d} \geq \sum_{j \in G} n_j(c \log^2m)^{(a-1)d}\left(\delta(\log m)/4\right)^{2d} = $$
$$(\delta/4)^{2d}c^{-d}(c \log^2m)^{ad} \sum_{j \in G} n_j \geq (\delta/4)^{2d}c^{-d}(c \log^2m)^{ad} \delta_1 n \geq$$
$$n (c \log^2m)^{ad} = nf''.$$

\end{proof}

Take $a_0$ to be ${r \choose 2}$ if $s < r-1$, $r/2$ if $s=r-1$ and $r$ is even, and $(r+3)/2$ if $s=r-1$ and $r$ is odd. Take $f_0 = (c\log^2m)^{a_0d}$. The following theorem states that either some $\gc{S}$ is large or their product is large.

\begin{theorem}
If $m \geq m_0$ either $\prod_S \gc{S} \geq (mf_0)^{r-2 \choose s-2}$ or there is some $S \subseteq R$ of size $s$ with $\gc{S} \geq (mf_0)^{{s \choose 2}/{r \choose 2}}$.
\end{theorem}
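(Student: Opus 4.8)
The plan is to read this statement off from the structural theorem proved just above, applied to the given Gallai $r$-coloring $F$ of $E(K_m)$ (that is, with $n=m$), together with the discrepancy lemma, Lemma~\ref{weightedgraphs}. Write $\gc{S}=\g{F}{S}$. Applying the structural theorem produces $f\ge 1$, $\epsilon\ge 0$, a family $\mathcal{P}\subseteq{R \choose 2}$, and weights $w_P\in[1,\infty)$ for $P\in\mathcal{P}$, all satisfying its five listed properties; put $a=\size{\mathcal{P}}$. The dichotomy in the conclusion will be produced by splitting on whether $f\ge f_0$.

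If $f\ge f_0$ the first alternative is immediate: property~(3) of the structural theorem, with $n=m$, gives $\prod_S\gc{S}\ge (mf)^{r-2 \choose s-2}\ge (mf_0)^{r-2 \choose s-2}$. So suppose $f<f_0$; here I would establish the second alternative. First I would note that $a<a_0$: since $m\ge m_0$ forces $c\log^2 m>1$, the map $x\mapsto (c\log^2 m)^x$ is increasing, so property~(5) (which reads $(c\log^2m)^{ad}\le f$) together with $f<f_0=(c\log^2m)^{a_0d}$ yields $ad<a_0d$. I would then apply Lemma~\ref{weightedgraphs} to the non-negative weighting $\tilde w_P:=\log w_P$ for $P\in\mathcal{P}$ and $\tilde w_P:=0$ for $P\notin\mathcal{P}$: at most $a<a_0$ of these weights are positive, so the first alternative of that lemma fails, and we obtain some $S\in{R \choose s}$ with
$$\sum_{P\subseteq S}\tilde w_P\ \ge\ \left(1+\left(4r{r \choose 2}^2\right)^{-1}\right)\frac{{s \choose 2}}{{r \choose 2}}\,\tilde w,\qquad \tilde w:=\sum_P\tilde w_P=\log\prod_{P\in\mathcal{P}}w_P.$$
By property~(1), $\log\gc{S}\ge\sum_{P\in{S \choose 2}\cap\mathcal{P}}\log w_P=\sum_{P\subseteq S}\tilde w_P$, and by property~(2) with $n=m$, $\tilde w\ge(1-\epsilon)\log m$.

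To close the case $f<f_0$, set $\eta:=\left(4r{r \choose 2}^2\right)^{-1}$ and control $\epsilon$: property~(4) together with $f<f_0<(\log m)^{2a_0d}$ gives $(\log m)^{C\epsilon}<(\log m)^{2a_0d}$, hence $\epsilon<2a_0d/C$, which by $a_0\le{r \choose 2}$ and $C=32r{r \choose 2}^3d$ is at most $\eta/4$. Then $(1+\eta)(1-\epsilon)\ge 1+\eta/2$, so $\log\gc{S}\ge\left(1+\frac{\eta}{2}\right)\frac{{s \choose 2}}{{r \choose 2}}\log m$. On the other hand $\log f_0=a_0d\log(c\log^2m)\le 2a_0d\log\log m$, and the tower $m_0=2^{2^{2^{2^{8r^2}}}}$ makes $\frac{\eta}{2}\log m$ far exceed $2a_0d\log\log m$; hence $\frac{\eta}{2}\log m\ge\log f_0$ and therefore $\log\gc{S}\ge\frac{{s \choose 2}}{{r \choose 2}}\left(\log m+\log f_0\right)$, i.e.\ $\gc{S}\ge (mf_0)^{{s \choose 2}/{r \choose 2}}$, which is the second alternative.

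I expect the only genuine obstacle to be the bookkeeping with constants in this last step — confirming that $\epsilon\le\eta/4$ and that $\log f_0$ is negligible against $\frac{\eta}{2}\log m$ — but both are mechanical consequences of the deliberately lavish choices of $m_0$, $C$, $c$, and $d$; all of the real content sits in the structural theorem and in Lemma~\ref{weightedgraphs}, and the present theorem is just the bridge that converts their two outputs (a lower bound on $\prod_S\gc{S}$ on the one hand, a weighted graph with few nonzero edge weights on the other) into the stated dichotomy.
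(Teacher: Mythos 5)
Your proposal is correct and follows essentially the same route as the paper's proof: both invoke the structural theorem with $n=m$ and feed the weights $\log w_P$ into Lemma~\ref{weightedgraphs}, using properties (1), (2), (4), (5) to convert its output into the dichotomy. The only difference is organizational — you split on $f\ge f_0$ versus $f<f_0$ and deduce $a<a_0$ and $\epsilon\le\left(16r{r \choose 2}^2\right)^{-1}$ contrapositively, whereas the paper splits directly on the size of $\epsilon$ and of $\size{\mathcal{P}}$ — and your constant bookkeeping ($\epsilon<2a_0d/C\le\eta/4$, $(1+\eta)(1-\eta/4)\ge 1+\eta/2$, and $m^{\eta/2}\ge f_0$ for $m\ge m_0$) checks out.
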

\begin{proof}
Choose $f,\epsilon,\mathcal{P},w_P$ as given by the previous theorem, then we need only show $f \geq f_0$. If $\epsilon \geq \left(16r {r \choose 2}^2\right)^{-1}$ then we have $C \epsilon \geq 2{r \choose 2}d \geq 2 a_0 d$ so $f \geq (\log m)^{C \epsilon} \geq (\log m)^{2 a_0 d} \geq f_0$.

Otherwise, $\epsilon < \left(16r {r \choose 2}^2\right)^{-1}$. Define a weighted graph on vertex set $R$ where an edge $e \in {R \choose 2}$ has weight $\log w_e$. Note this graph has non-negative edge weights and if an edge is not in $\mathcal{P}$, then it has weight $0$.

By Lemma \ref{weightedgraphs}, either this graph has at least $a_0$ edges or there is some set $S$ on $s$ vertices with 
$$\sum_{P \subseteq S} \log w_P \geq \left(1 + \left(4r {r \choose 2}^2\right)^{-1}\right)\frac{{s \choose 2}}{{r \choose 2}}\sum_P \log w_P.$$

If the graph has at least $a_0$ edges, then $\size{\mathcal{P}} \geq a_0$ so $f \geq (c \log^2m)^{ap_0 d}$, as desired.

Otherwise, there is some $S$ so that 
$$\sum_{P \subseteq S} \log w_P \geq \left(1 + \left(4r {r \choose 2}^2\right)^{-1}\right)\frac{{s \choose 2}}{{r \choose 2}}\sum_P \log w_P.$$

Then we have
$$\prod_{P \subseteq S} w_P \geq \prod_P w_P^{\left(1 + \left(4r {r \choose 2}^2\right)^{-1}\right)\frac{{s \choose 2}}{{r \choose 2}}} \geq m^{(1-\epsilon)\left(1 + \left(4r {r \choose 2}^2\right)^{-1}\right)\frac{{s \choose 2}}{{r \choose 2}}} \geq$$
$$m^{\left(1-\left(16r {r \choose 2}^2\right)^{-1}\right)\left(1 + \left(4r {r \choose 2}^2\right)^{-1}\right)\frac{{s \choose 2}}{{r \choose 2}}} \geq m^{\left(1 + \left(8r {r \choose 2}^2\right)^{-1}\right)\frac{{s \choose 2}}{{r \choose 2}}} \geq (mf_0)^{{s \choose 2}/{r \choose 2}},$$
where the second to last inequality follows from $(1+b)(1-b/4) \geq 1+b/2$ for any $b \in [0,1]$.
\end{proof}

The previous theorem easily implies a general lower bound for the largest value of $\gc{S}$.

\begin{theorem}\label{completelowerbound}
If $m \geq m_0$, there is some $S \subseteq R$ of size $s$ with $\gc{S} \geq (mf_0)^{{s \choose 2}/{r \choose 2}}$.
\end{theorem}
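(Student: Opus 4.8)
The plan is to deduce this statement directly from the preceding dichotomy theorem by an averaging argument, exactly as in Section \ref{sectweaklower}. That theorem asserts that, for $m \geq m_0$, either some $S \in {R \choose s}$ already satisfies $\gc{S} \geq (mf_0)^{{s \choose 2}/{r \choose 2}}$ --- in which case there is nothing left to do --- or else
$$\prod_{S \in {R \choose s}} \gc{S} \geq (mf_0)^{r-2 \choose s-2}.$$
So the only work is to handle this second alternative.

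In that case I would note that the product on the left runs over exactly ${r \choose s}$ sets, so by the pigeonhole principle there is some $S \in {R \choose s}$ with $\gc{S} \geq (mf_0)^{{r-2 \choose s-2}/{r \choose s}}$. It then remains only to verify the elementary binomial identity
$$\frac{{r-2 \choose s-2}}{{r \choose s}} = \frac{s(s-1)}{r(r-1)} = \frac{{s \choose 2}}{{r \choose 2}},$$
which follows by writing both binomial coefficients as factorials and cancelling the common factor $(r-s)!$ together with the matching parts of $s!/(s-2)!$ and $r!/(r-2)!$. Substituting this exponent gives $\gc{S} \geq (mf_0)^{{s \choose 2}/{r \choose 2}}$, which completes the proof.

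There is no genuine obstacle at this stage: all of the difficulty has already been absorbed into the long induction establishing the dichotomy theorem, together with the facts about $f_\epsilon$ (Lemma \ref{factsaboutfepsilon}) and the discrepancy lemma in edge-weighted graphs (Lemma \ref{weightedgraphs}). For context, I would close by remarking that unwinding the definitions $f_0 = (c\log^2 m)^{a_0 d}$ and $d = (r-s)/(s-1)$ shows $(mf_0)^{{s \choose 2}/{r \choose 2}} = \Omega\!\left(m^{{s \choose 2}/{r \choose 2}}(\log m)^{c_{r,s}}\right)$: the exponent of $\log m$ is $2 a_0 d \cdot {s \choose 2}/{r \choose 2}$, which equals $s(r-s)$ when $s<r-1$, equals $1$ when $s=r-1$ and $r$ is even, and equals $1+3/r$ when $s=r-1$ and $r$ is odd, matching $c_{r,s}$ in each case. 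Hence Theorem \ref{completelowerbound} yields the lower bound in Theorem \ref{gentheorem} for $1<s<r$ up to the implied constant factor.
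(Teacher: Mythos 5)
Your proposal is correct and matches the paper's own proof: both invoke the preceding dichotomy theorem and, in the second alternative, average over the ${r \choose s}$ factors to extract one $S$ with $\gc{S} \geq (mf_0)^{{r-2 \choose s-2}/{r \choose s}} = (mf_0)^{{s \choose 2}/{r \choose 2}}$. The binomial identity you check is exactly the step the paper uses implicitly, so there is no substantive difference.
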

\begin{proof}
By the previous theorem, either such an $S$ exists or $\prod_{S \subseteq R} \gc{S} \geq (mf_0)^{r-2 \choose s-2}$. In this latter case, since this is the product of ${r \choose s}$ numbers, there must be some $S$ with $\gc{S} \geq (mf_0)^{{r-2 \choose s-2}/{r \choose s}} = (mf_0)^{{s \choose 2}/{r \choose 2}}$.
\end{proof}

Before we proceed, note that:
$$d = {r-2 \choose s-1}/{r-2 \choose s-2} = \frac{r-s}{s-1}.$$

We now simply rewrite the statement of the previous theorem in more familiar notation.

\begin{theorem}
Every Gallai coloring of a complete graph on $m$ vertices has a vertex subset using at most $s$ colors of order $\Omega\left(m^{{s \choose 2}/{r \choose 2}}\log^{c_{r,s}}m\right)$, where

$$c_{r,s}=
\left\{
\begin{array}{cl}
s(r-s) & \mbox{if } s < r-1,\\
1 & \mbox{if } s=r-1 \mbox{ and r is even},\\
(r+3)/r & \mbox{if } s=r-1 \mbox{ and r is odd}.
\end{array}\right.
$$
\end{theorem}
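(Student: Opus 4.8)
The final statement is just a restatement of Theorem~\ref{completelowerbound} in the notation of Theorem~\ref{gentheorem}, so the proof is essentially bookkeeping: unpack the quantity $(mf_0)^{{s\choose 2}/{r\choose 2}}$ and check that it has the claimed asymptotic form $\Omega\!\left(m^{{s\choose 2}/{r\choose 2}}\log^{c_{r,s}}m\right)$. First I would handle the easy cases separately: for $s=r$ the complete graph itself uses at most $s$ colors, and for $s=1$ the Erd\H{o}s--Szekeres bound for $r$ colors gives a monochromatic set of order $\Omega(\log n)$; these match $c_{r,r}=0$ and $c_{r,1}=1$. So assume $1<s<r$, where $m_0$ is an absolute constant and we may take $m\ge m_0$ (absorbing smaller $m$ into the implied constant).

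**Main computation.** By Theorem~\ref{completelowerbound} there is $S\in{R\choose s}$ with $\gc{S}\ge (mf_0)^{{s\choose 2}/{r\choose 2}}$, where $f_0=(c\log^2 m)^{a_0 d}$, $d=\frac{r-s}{s-1}=\frac{{r-2\choose s-1}}{{r-2\choose s-2}}$, and $a_0$ equals ${r\choose 2}$ if $s<r-1$, $r/2$ if $s=r-1$ with $r$ even, and $(r+3)/2$ if $s=r-1$ with $r$ odd. Since $c$ and $r,s$ are constants, $(c\log^2 m)^{a_0 d}=\Theta\!\left((\log m)^{2a_0 d}\right)$, hence
$$
(mf_0)^{{s\choose 2}/{r\choose 2}} \;=\; \Theta\!\left( m^{{s\choose 2}/{r\choose 2}} (\log m)^{2a_0 d\cdot {s\choose 2}/{r\choose 2}} \right).
$$
So it remains to verify the exponent identity $2a_0 d\cdot \dfrac{{s\choose 2}}{{r\choose 2}} = c_{r,s}$ in each of the three subcases. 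For $s<r-1$: $a_0={r\choose 2}$, so the exponent is $2d{s\choose 2}=2\cdot\frac{r-s}{s-1}\cdot\frac{s(s-1)}{2}=s(r-s)$, matching $c_{r,s}$. For $s=r-1$ and $r$ even: $d=\frac{1}{r-2}$, $a_0=r/2$, ${r\choose 2}=\frac{r(r-1)}{2}$, ${s\choose 2}={r-1\choose 2}=\frac{(r-1)(r-2)}{2}$, and $2a_0 d\cdot\frac{{s\choose 2}}{{r\choose 2}} = 2\cdot\frac r2\cdot\frac1{r-2}\cdot\frac{(r-1)(r-2)/2}{r(r-1)/2}=1$. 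For $s=r-1$ and $r$ odd: $a_0=(r+3)/2$, and the same substitution gives $2\cdot\frac{r+3}{2}\cdot\frac1{r-2}\cdot\frac{(r-1)(r-2)}{r(r-1)}=\frac{r+3}{r}$, which equals $c_{r,r-1}$ for $r$ odd.

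**Wrapping up.** Putting these together, $\gc{S}=\Omega\!\left(m^{{s\choose 2}/{r\choose 2}}\log^{c_{r,s}}m\right)$, which is exactly the asserted bound, noting that a set which is $S$-subchromatic for some $S\in{R\choose s}$ is precisely a vertex subset using at most $s$ colors. The upper bound matching this (so the $\Omega$ cannot be improved) was already established in Section~\ref{sectgeneralupperbound}. I do not expect any real obstacle here — the only thing to be careful about is keeping the three case distinctions for $a_0$ and $c_{r,s}$ aligned, and remembering that the $o(1)$/constant factors from rounding $m$ to an integer and from $c$ being a fixed constant all get swallowed by $\Omega(\cdot)$; the genuine content was in Theorem~\ref{completelowerbound}, whose proof occupies the bulk of the section.
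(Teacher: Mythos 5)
Your proposal is correct and is essentially the paper's own argument: the paper likewise derives this theorem by invoking Theorem~\ref{completelowerbound} and verifying, case by case, that $2a_0 d\,{s\choose 2}/{r\choose 2}=c_{r,s}$ (with the easy cases $s=1$ and $s=r$ handled earlier and $1<s<r$ assumed throughout the section). The exponent computations you give match the paper's.
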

\begin{proof}
If $s < r-1$ and $m \geq m_0$, by Theorem \ref{completelowerbound} the coloring has a subchromatic set of order at least $m^{{s \choose 2}/{r \choose 2}}\left(c\log^2m\right)^{{s \choose 2}d}$. As $2{s \choose 2} d = s(s-1) \frac{r-s}{s-1} = s(r-s)$, this gives the desired bound in this case.

If $s = r-1$, $r$ is even, and $m \geq m_0$, by Theorem \ref{completelowerbound}, the coloring has a subchromatic set of order at least $m^{{s \choose 2}/{r \choose 2}}\left(c\log^2m\right)^{(r/2)d}$. As $$2(r/2) {s \choose 2}{r \choose 2}^{-1} d = r \frac{s(s-1)}{r(r-1)} \frac{r-s}{s-1} = r \frac{(r-1)(r-2)}{r(r-1)}\frac{1}{r-2} = 1,$$ this gives the desired bound in this case.

If $s = r-1$, $r$ is odd, and $m \geq m_0$, by Theorem \ref{completelowerbound}, the coloring has a subchromatic set of order at least $m^{{s \choose 2}/{r \choose 2}}\left(c\log^2m\right)^{((r+3)/2)d}$. As $$2((r+3)/2) {s \choose 2}{r \choose 2}^{-1} d = (r+3) \frac{s(s-1)}{r(r-1)} \frac{r-s}{s-1} = (r+3) \frac{(r-1)(r-2)}{r(r-1)}\frac{1}{r-2} = (r+3)/r,$$ this gives the desired bound in this case.
\end{proof}

\appendix
\section{Proof of Lemma~\ref{factsaboutf}}
\label{appendixA}
For convenience, we restate both the definition of $f$ and the statement of the lemma here:

$$f(n):=
\left\{
\begin{array}{lll}
c\log^2(Cn) & \mbox{if } 0 < n \leq m^{4/9}\\
c^2\log^2(m^{4/9})\log^2(Cnm^{-4/9}) & \mbox{if } m^{4/9} < n \leq m^{8/9}\\
c^3\log^4(m^{4/9})\log^2(Cnm^{-8/9}) & \mbox{if } m^{8/9} < n \leq m,
\end{array} \right., $$
where $D=2^{2048}$, $C=2^{D^8},$ and $c=\log^{-2}(C^2) = D^{-16}/4$.

\begin{lemma}
If $m \geq C$, then the following statements hold about $f$ for any integer $n$ with $1 < n \leq m$.
\begin{enumerate}
\item For any $\alpha \in \left[\frac{1}{n},1\right], \, f(\alpha n) \geq \alpha f(n)$.
\item For any $\alpha_1,\alpha_2,\alpha_3 \in \left[ \frac{1}{n},1 \right]$ such that $\sum_i \alpha_i = 1$ we have, taking $n_i = \alpha_i n$, $$nf(n) - \sum_i n_i f(n_i) \leq \frac{8}{\log C}nf(n).$$
\item For $i \geq 0$ and $m^{7/18} \geq 2^j \geq 1$ we have $f(2^i)\log^2(D2^j) \geq 512 f(2^{i+\frac{8}{7}j})$.
\item For $1 \leq \tau \leq n \leq D^3 \tau$, we have $f(\tau) \geq f(n)/2$.
\item For any $\alpha \in \left[\frac{1}{n},\frac{1}{32}\right], \, f(\alpha n) \geq 16 \alpha f(n)$.
\end{enumerate}
\end{lemma}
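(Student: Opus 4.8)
The plan is to use that on each of the three ``intervals of $f$'' the function has the form $f_i(x)=\gamma_i\log^2(\delta_ix)$ with explicit constants: $(\gamma_1,\delta_1)=(c,C)$ on $(0,m^{4/9}]$, $(\gamma_2,\delta_2)=\big(c^2\log^2(m^{4/9}),\,Cm^{-4/9}\big)$ on $(m^{4/9},m^{8/9}]$, and $(\gamma_3,\delta_3)=\big(c^3\log^4(m^{4/9}),\,Cm^{-8/9}\big)$ on $(m^{8/9},m]$; I extend each $f_i$ to the corresponding \emph{closed} interval. Two structural facts drive everything. First, the normalization was chosen so that $c\log^2 C=\tfrac14$ (because $c=\log^{-2}(C^2)$), which yields the two \emph{downward jumps}: $f_1(m^{4/9})=4\big(\log(Cm^{4/9})/\log(m^{4/9})\big)^2\,f_2(m^{4/9})\ge 4\,f_2(m^{4/9})$, and likewise $f_2(m^{8/9})\ge 4\,f_3(m^{8/9})$. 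Second, whenever $x$ lies in the closed interval of $f_i$ one has $\delta_ix\ge C$, hence $\log(\delta_ix)\ge\log C=D^8$, a quantity so large that it swamps every error term below. These facts reduce each assertion to a computation inside a single piece, pieces being linked across a boundary $b\in\{m^{4/9},m^{8/9}\}$ via $f_i(b)\ge f_{i+1}(b)$; moreover the hypotheses keep the relevant points within one interval's width of each other (e.g.\ in (3), $2^j\le m^{7/18}$ gives $2^{(8/7)j}\le m^{4/9}$, and in (4), $n/\tau\le D^3\ll m^{4/9}$), so at most one boundary is ever crossed.

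Inside a single piece, write $p\le q$ with $v=\log(q/p)$, so $\log(\delta_iq)=\log(\delta_ip)+v$ with both logarithms at least $D^8$; combined with the elementary facts $(1-t)^2\ge1-2t$, $(1+t)^{-2}\ge1-2t$, $(1-a)(1-b)\ge1-a-b$, and that $\log^2(\kappa x)/x$ strictly decreases for $\kappa x\ge 8$ (one line via differentiation), this gives: (i) $f_i(x)/x$ is non-increasing, and in fact $\frac{f_i(p)/p}{f_i(q)/q}\ge 2^{v}\big(\tfrac{\log C}{\log C+v}\big)^2$, which exceeds $16$ once $q/p\ge 32$ and is at least $\tfrac12\cdot(q/p)$ in general; (ii) $f_i(p)\ge\big(1-\tfrac{2v}{\log C}\big)f_i(q)$; (iii) $f_i(q)/f_i(p)\le(1+v/D^8)^2$, which is $\approx 1$. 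Now: Property (1) is (i) inside pieces plus the downward jump across a boundary (which only makes $f(x)/x$ drop more). Property (5) is the same, except across the boundary I feed in the \emph{quantitative} jump (factor $\ge 4$): in every configuration --- same piece; one crossing with one side of ratio $\ge 32$; one crossing with both sides of ratio $\ge\sqrt{32}$ (using $f_i(p)/p\ge(\mathrm{ratio}/2)\,f_i(q)/q$); two crossings --- the product of gains is at least $16$. Property (4) is (iii): $f_i(n)/f_i(\tau)\le(1+\tfrac{3\log D}{D^8})^2\le\sqrt2$ inside a piece, and chaining two such estimates through one jump keeps the total factor $\le 2$. Property (2) is (ii): $n_if(n_i)=\alpha_i n\,f(\alpha_in)\ge\alpha_i\big(1-\tfrac{2\log(1/\alpha_i)}{\log C}\big)nf(n)$, so $nf(n)-\sum_i n_if(n_i)\le\tfrac{2}{\log C}\big(\sum_i\alpha_i\log(1/\alpha_i)\big)nf(n)\le\tfrac{2\log3}{\log C}nf(n)\le\tfrac{8}{\log C}nf(n)$ by the entropy bound $\sum_i\alpha_i\log(1/\alpha_i)\le\log3$. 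Property (3) is (iii) applied once or twice (the intermediate jump only helps): $f(2^{i+(8/7)j})/f(2^i)\le(1+\tfrac{8j}{7D^8})^4$, which one compares with $\tfrac{1}{512}\log^2(D2^j)=\tfrac{1}{512}(2048+j)^2$ --- the constant $2048^2$ covers the factor $512$ when $j$ is small, while for large $j$ the left side is $\approx(\tfrac{8j}{7D^8})^2\ll j^2/512$ since $D^8$ is enormous.

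The main obstacle is not any individual inequality but the bookkeeping: one must push the single-piece estimates across the two interval boundaries while tracking all constants. The touchiest points are (5), where ``gaining a factor $16$'' must be assembled from within-piece gains and the factor-$4$ boundary jumps across all boundary configurations, and (3), whose asymmetric exponent $\tfrac87$ and range $2^j\le m^{7/18}$ are calibrated exactly so that $2^i$ and $2^{i+(8/7)j}$ never straddle more than one interval; once that observation is in place the remaining verifications are routine, with the deliberately wasteful constant $512$ and the gigantic $\log C=D^8$ supplying all the slack --- and it is precisely those routine computations that are relegated to the appendix.
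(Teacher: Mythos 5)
Most of your plan is sound, and in places cleaner than the paper's Appendix A: global monotonicity of $f(x)/x$ (within pieces by differentiation, across boundaries by the downward jump) for Fact 1, the entropy bound $\sum_i \alpha_i\log(1/\alpha_i)\le\log 3$ for Fact 2, chaining two factor-$\sqrt2$ estimates through a jump for Fact 4, and the quantitative factor-$4$ jumps coming from $c\log^2 C=\tfrac14$ are all correct. The genuine gap is in your Fact 3 in the boundary-crossing case. There your only upper bound on $f(2^{i+\frac87 j})/f(2^i)$ is $\left(1+\tfrac{8j}{7D^8}\right)^4$, obtained by bounding each within-piece ratio by $(1+v_\ell/\log C)^2$ with the worst case $\log(\delta_i p)\ge\log C$ and discarding the jump; compared with $\tfrac1{512}(2048+j)^2$ this is quartic in $j$ against quadratic, so the comparison fails once $j\gtrsim D^{16}$ -- and such $j$ are allowed, since $j$ may be as large as $\tfrac7{18}\log m$ and the lemma must hold for every $m\ge C$ (indeed the application sends $m\to\infty$). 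Your own large-$j$ check silently uses the exponent $2$ rather than $4$, which is why the problem is hidden. The statement is still true in that regime, but only because of the coupling you discarded: when both spans $v_1=\log(b/2^i)$ and $v_2=\log(2^{i+\frac87j}/b)$ are large, one has $\log(\delta_i 2^i)\ge \log C+\tfrac49\log m-v_1\ge\log C+v_2$, and the jump supplies an extra factor $\tfrac14\bigl(\log(m^{4/9})/\log(Cm^{4/9})\bigr)^2$; together these keep the true ratio quadratic in $j$. This is exactly what the paper's crossing case exploits, via the rearrangement fact that $\log a_0\log b_0\le\log a_1\log b_1$ whenever $a_0\ge a_1\ge b_1\ge b_0\ge 2$ and $a_0b_0=a_1b_1$, combined with the smallness of $c$. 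You need some version of that argument; bounding the two pieces independently against $\log C$ does not suffice.

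Two secondary slips, both repairable: the assertion that $2^{v}\left(\tfrac{\log C}{\log C+v}\right)^2\ge\tfrac12\cdot 2^{v}$ ``in general'' is false once $v>(\sqrt2-1)\log C$, which can happen since $q/p$ may be as large as $m^{4/9}$; what is true (and all you need, since this quantity is increasing in $v$ and already exceeds $16$ for $v\ge5$) is a uniform lower bound on it for the relevant ranges of $v$. Also your four-configuration enumeration for Fact 5 omits the single-crossing case in which one side has ratio in $[\sqrt{32},32)$ and the other below $\sqrt{32}$; it is handled the same way (the product of the two within-piece gains is essentially $2^{v_1+v_2}\ge 32$ up to a $1-O(D^{-8})$ factor, and the jump gives another $4$), but as written the case analysis is not exhaustive.
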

\begin{proof}
Observe that $f(n)$ has two points of discontinuity: $p_0 = m^{4/9}$ and $p_1 = m^{8/9}$. Recall that the three intervals of $f$ are $(0,p_0],(p_0,p_1],(p_1,m]$; name these $I_0,I_1,I_2$, respectively.

If $t$ is either $p_0$ or $p_1$, then we have $f_+(t):=\lim_{n \rightarrow t^+}f(n) \leq \lim_{n \rightarrow t^-}f(n)=:f_-(t)$.

Observe further that if $n$ is in some interval $I$ of $f$ then for any $t \in I$ we have $f(t) = \gamma \log^2(\delta t)$ for constants $\gamma,\delta$ with $\delta t \geq C$.

\noindent {\bf Proof of Fact 1:}
We first argue that it is sufficient to show Fact $1$ in the case that both $n$ and $\alpha n$ are in the same interval of $f$. Intuitively, the points of discontinuity only help us. If $n$ is in $I_1$, $\alpha n$ is in $I_0$, and we have shown Fact 1 holds when $n$ and $\alpha n$ are in the same interval, then 
$$f(\alpha n) \geq \frac{\alpha n}{p_0} f_+(p_0) \geq \frac{\alpha n}{p_0} f_-(p_0) \geq \frac{\alpha n}{p_0}\frac{p_0}{n}f(n) = \alpha f(n).$$
The case where $n$ is in $I_2$ and $\alpha n$ is in $I_1$ and the case where $n$ is in $I_2$ and $\alpha n$ is in $I_0$ hold by essentially the same argument.

We next show Fact $1$ in the case that both $n$ and $\alpha n$ are in the same interval $I$ of $f$. We have, choosing $\gamma$ and $\delta$ to be such that $f(t)=\gamma \log^2 (\delta t)$ on $I$, that
$$f(\alpha n) = \gamma \log^2(\alpha \delta n),$$
$$\alpha f(n) = \gamma \alpha \log^2(\delta n) = \gamma (\sqrt{\alpha}\log(\delta n))^2.$$
Thus, it is sufficient to show that $\log(\alpha \delta n) - \sqrt{\alpha}\log(\delta n) \geq 0$. Note equality holds if $\alpha = 1$. We consider the first derivative with respect to $\alpha$; we will show that it is negative for $\alpha \geq \frac{4}{\ln^2(\delta n)}$. The first derivative is:

$$\frac{1}{\alpha \ln(2)} - \frac{1}{2\sqrt{\alpha}}\log(\delta n) = \frac{2 - \sqrt{\alpha}\ln(\delta n)}{\alpha \ln(4)}.$$

Note the above is negative if $2 - \sqrt{\alpha}\ln(\delta n) \leq 0$, which is equivalent to $\alpha \geq \frac{4}{\ln^2(\delta n)}$.

Therefore, for $\alpha \in \left[\frac{4}{\ln^2(\delta n)},1\right]$, assuming $\alpha n \in I$, we have $f(\alpha n) \geq \alpha f(n)$. If $\alpha < \frac{4}{\ln^2(\delta n)}$ with $\alpha n \in I$ then, 
$$\alpha f(n) < \frac{4}{\ln^2(\delta n)} \gamma \log^2(\delta n) = \frac{4}{\ln^2(2)}  \gamma \leq \log^2C \gamma \leq \gamma \log^2(\delta \alpha n) = f(\alpha n),$$
where the first inequality follows by the assumed upper bound on $\alpha$ and the last one by $\alpha n$ in $I$ (and so $\delta \alpha n \geq C$).

\noindent {\bf Proof of Fact 2:}
Let $\gamma, \delta$ be such that for $t$ in the interval $I_j$ containing $n$ we have $f(t) = \gamma \log^2(\delta t)$. We define a new function $f_2$ whose domain is $[n \log^{-1}C,n]$. For any $t$ in the domain of $f_2$ that is in $I_j$, we define $f_2(t) = f(t)$, and for any $t$ in the domain of $f_2$ that is not in $I_j$, we define $f_2(t) = \gamma \log^2C$. If there is some point $t$ in $[n\log^{-1}C ,n]$ that is not in $I_j$, then $t$ must be in $I_{j-1}$, as $t \geq n \log^{-1}C \geq p_{j-1} \log^{-1}C > p_{j-2}$. Then note we have chosen, for $t$ not in $I_j$, $f_2(t)=f_+(p_{j-1})$. Therefore, $f_2$ is continuous. Also, $tf_2(t)$ is convex (this is easy to see by looking at the first derivative). The main idea behind the proof will be to replace $f$ by $f_2$ and then apply convexity to get the bounds.

We claim $f(t) \geq f_2(t)$ for all $t$ in the domain of $f_2$. If $t$ is in $I_j$, then $f_2(t)=f(t)$. Otherwise, $t$ is in $I_{j-1}$. For any $t \in [n \log^{-1}C,n]$, note that $\delta t \geq \delta n \log^{-1}C \geq C \log^{-1}C$. Therefore,
$$f(t) = \frac{\gamma}{c \log^2(m^{4/9})} \log^2(\delta t m^{4/9}) \geq \frac{\gamma}{c \log^2(m^{4/9})} \log^2\left(m^{4/9}C\log^{-1}C\right) \geq \frac{\gamma}{c} \geq \gamma \log^2C = f_2(t),$$
where the first equality follows by $t \in I_{j-1}$ and the first inequality by $\delta t \geq \log^{-1}C$.

Take $S = \{ i : \alpha_i \geq \log^{-1}C\}$. For $i \in S$ we have $\alpha_i n$ is in the domain of $f_2$. Take $\kappa$ such that $\sum_{i \in S} \alpha_i = \kappa$. Since $\sum_i \alpha_i = 1$, $\kappa = 1-\sum_{i \not \in S} \alpha_i \geq 1-3 \log^{-1}C$. Hence,

$$\sum_i n_if(n_i) \geq \sum_{i \in S} n_if(n_i) \geq \sum_{i \in S} n_if_2(n_i) \geq \sum_{i \in S}\frac{\kappa}{\size{S}}nf_2\left(\frac{\kappa}{\size{S}}n\right) =$$
$$\kappa n f_2\left(\frac{\kappa}{\size{S}}n\right) \geq \kappa n f_2\left(\frac{1-3\log^{-1}C}{3}n\right) \geq \kappa n f_2(n/4) \geq \kappa \gamma n \log^2(\delta n/4),$$
where the third inequality follows Jensen's inequality applied to the convex function $tf_2(t)$ and the fourth inequality holds since $f_2$ is an increasing function.

This gives

$$\kappa n f(n) - \sum_{i \in S} n_if(n_i) \leq \kappa \gamma n \log^2(\delta n) - \kappa \gamma n \log^2(\delta n/4) = \kappa \gamma n (\log^2(\delta n) - \log^2(\delta n/4)).$$

We now consider
$$\log^2(\delta n) - \log^2(\delta n/4) = (\log(\delta n) + \log(\delta n/4))(\log(\delta n)-\log(\delta n/4)) \leq 2 \log (\delta n) \log 4 = 4 \log(\delta n).$$

Noting that $\log(\delta n) \geq \log C$, we get 
$$\kappa \gamma n (4 \log(\delta n)) \leq \frac{4}{\log C} \gamma n\log^2(\delta n) = \frac{4}{\log C} \gamma n f(n).$$

Thus,

$$nf(n) - \sum_i n_i f(n_i) \leq (1 - \kappa)nf(n) + \kappa nf(n) - \sum_{i \in S} n_if(n_i) \leq$$ 
$$\frac{1}{2 \log C}nf(n) + \frac{4}{\log C}nf(n) \leq \frac{8}{\log C}nf(n).$$

\noindent {\bf Proof of Fact 3:}
Take $\gamma,\delta$ such that for $t$ in the interval of $f$ containing $2^i$ we have $f(t) = \gamma \log^2(\delta t)$. Take $j' = \frac{8}{7}j$. If $2^i$ and $2^{i + j'}$ are in the same intervals of $f$, then we get
$$f(2^i)\log^2(D2^j) = \gamma \log^2(\delta 2^i)\log^2(D2^j) = \gamma \log^2(2^{i + \log\delta})\log^2(2^{j+\log D}) =$$ 
$$\gamma \log^2(2^{(j + \log D)(i+\log\delta)}) \geq 512 \gamma \log^2(\delta 2^{i + j'}) = 512f(2^{i+\frac{8}{7}j}),$$
where the last inequality follows from $i + \log\delta \geq \log C \geq \log D$ and $j + \log D \geq \log D$. Therefore, $$(j+\log D)(i + \log\delta) \geq \frac{\log D}{2}(j+i+\log\delta) \geq \frac{\log D}{4}(2j + i + \log\delta) \geq 512(j' + i + \log\delta).$$

If $2^i$ and $2^{i + j'}$ are in different intervals of $f$, then they are in adjacent intervals since $2^{j'} \leq m^{4/9}$. Therefore,
$$f(2^{i+\frac{8}{7}j}) = f(2^{i + j'}) = c\gamma \log^2(m^{4/9}) \log^2(\delta m^{-4/9} 2^{i + j'}) \leq c\gamma \log^2(\delta 2^i)\log^2(2^{j'}) \leq$$ 
$$2c \left(\gamma \log^2(\delta 2^i)\right) \log^2(2^j) \leq \frac{1}{512}f(2^i)\log^2(2^j) \leq \frac{1}{512}f(2^i)\log^2(D2^j),$$
where the first inequality follows by the fact that if $a_0 \geq a_1 \geq b_1 \geq b_0 \geq 2$ and if $a_0b_0=a_1b_1$ then $(\log a_0)(\log b_0) \leq (\log a_1)(\log b_1)$. To see this last fact about logarithms, one may take the logarithm of both sides and apply the concavity of the logarithm function.

\noindent {\bf Proof of Fact 4:}
Choose $\gamma,\delta$ such that for $t$ in the same interval $I_j$ as $\tau$ we have $f(t) = \gamma \log^2(\delta t)$. If $n$ and $\tau$ are in different intervals of $f$, then $n$ must be in $I_{j+1}$ as $n/\tau \leq D^3 < m^{4/9}$. Furthermore, for $n$ and $\tau$ to be in different intervals, we must have $D^3 \delta \tau \geq Cm^{4/9}$ and so $\delta \tau \geq m^{4/9}$. This gives:
$$f(\tau) = \gamma \log^2(\delta \tau) \geq \gamma \log^2(m^{4/9}) \geq c \log^2(m^{4/9}) \gamma \log^2(CD^3) \geq $$
$$c \log^2(m^{4/9}) \gamma \log^2(\delta m^{-4/9} n) = f(n),$$
where the second inequality follows by $CD^3 \leq C^2$ and $c = \log^{-2}(C^2)$.

Otherwise, $\tau$ and $n$ are in the same interval. Then 
$$f(n) = \gamma \log^2(\delta n) \leq \gamma \log^2(\delta D^3 \tau) \leq \gamma \log^2 ((\delta \tau)^{4/3}) \leq 2 \gamma \log^2(\delta \tau) = 2f(\tau),$$
where the second inequality follows by $\delta \tau \geq C$ and $C^{1/3} \geq D$.

\noindent {\bf Proof of Fact 5:}
Let $\alpha \leq \frac{1}{32}$ be given. Consider:
$$16 \alpha f(n) = \frac{1}{2}(32 \alpha f(n)) \leq \frac{1}{2} f(32\alpha n) \leq f(\alpha n),$$
where the first inequality is by the first fact about $f$ and the second inequality is by the fourth fact about $f$.

\end{proof}

\section{Proof of Lemma \ref{weightedgraphs} on discrepancy in weighted graphs}
\label{appendixB}

We argue that if a weighted graph on $r$ vertices deviates in structure from the complete graph with edges of equal weight and if $s < r-1$, then there is some set of vertices $S$ of size $s$ so that the sum of the weights of the edges contained in $S$ is substantially larger than average.

\begin{lemma}
Given weights $w_P$ for $P \in {R \choose 2}$ with $w_P \geq 0$, take $w = \sum_P w_P$. Then if $s < r-1$, if some $w_P$ differs from ${r \choose 2}^{-1}w$ by at least $F$, then there is some $S \subseteq R$ of size $s$ satisfying $\sum_{P \subseteq S} w_P \geq \frac{{s \choose 2}}{{r \choose 2}}w + \frac{{s \choose 2}F}{r {r \choose 2}^2}$.
\end{lemma}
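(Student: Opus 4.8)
We may assume $s\ge 2$, since for $s\le 1$ we have ${s\choose 2}=0$ and the asserted bound reads $\sum_{P\subseteq S}w_P\ge 0$, which holds. Thus $2\le s\le r-2$ and $r\ge 4$. Write $\bar w={r\choose 2}^{-1}w$, let $\deg_w(v)=\sum_{P\ni v}w_P$ be the weighted degree of $v$, so $\sum_v\deg_w(v)=2w$ and the average weighted degree is $\overline{d}:=2w/r=(r-1)\bar w$, and set $p:={s\choose 2}/{r\choose 2}$. Fix an edge $P_0=\{u_0,v_0\}$ with $|w_{P_0}-\bar w|\ge F$. The plan is to pick a uniformly random $s$-subset $S\subseteq R$; since $\Pr[P\subseteq S]=p$ for every edge $P$, we have $\mathbb{E}\bigl[\sum_{P\subseteq S}w_P\bigr]=pw$, so $\max_S\sum_{P\subseteq S}w_P\ge pw$, and the task is to improve this bound by $\frac{{s\choose 2}F}{r{r\choose 2}^2}=\frac{pF}{r{r\choose 2}}$, exploiting the deviation of $P_0$.

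First I would record three routine conditional computations, each obtained by evaluating $\sum_P w_P\Pr[P\subseteq S\mid\cdot]$ and then using that $\max_S\sum_{P\subseteq S}w_P$ is at least the average of $\sum_{P\subseteq S}w_P$ over any nonempty family of $s$-sets. Conditioning on a vertex $v$ lying in $S$ gives $\mathbb{E}\bigl[\sum_{P\subseteq S}w_P\mid v\in S\bigr]=pw+\frac{(s-1)(r-s)}{(r-1)(r-2)}\bigl(\deg_w(v)-\overline{d}\bigr)$; conditioning on $v\notin S$ gives $\mathbb{E}\bigl[\sum_{P\subseteq S}w_P\mid v\notin S\bigr]=pw+\frac{s(s-1)}{(r-1)(r-2)}\bigl(\overline{d}-\deg_w(v)\bigr)$; and conditioning on both endpoints of an edge $\{u,v\}$ lying in $S$ gives a quantity $M_{uv}$ with
$$M_{uv}-pw=\sigma\,(w_{uv}-\bar w)+\kappa\bigl((\deg_w(u)-\overline{d})+(\deg_w(v)-\overline{d})\bigr),\qquad \sigma:=1+\tfrac{(s-2)(s-3)}{(r-2)(r-3)}-\tfrac{2(s-2)}{r-2},\quad \kappa:=\tfrac{(s-2)(r-s)}{(r-2)(r-3)},$$
where the constant term is pinned down by the observation that $M_{uv}=pw$ whenever all edge weights are equal. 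Since $pw=p\,M_{uv}+(1-p)\,\mathbb{E}\bigl[\sum_{P\subseteq S}w_P\mid\{u,v\}\not\subseteq S\bigr]$, the larger of $M_{uv}$ and that last expectation is at least $pw+p\,|M_{uv}-pw|$, so $\max_S\sum_{P\subseteq S}w_P\ge pw+p\,|M_{uv}-pw|$. The crucial positivity fact, which I would verify by a short monomial calculation, is that $(r-2)(r-3)\sigma=(r-2)(r-3)+(s-2)(s-2r+3)$ is strictly decreasing in $s$ on $\{2,\dots,r-1\}$ and vanishes exactly at $s=r-1$; hence $\sigma\ge\frac{2}{(r-2)(r-3)}>0$ for $s\le r-2$. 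This is the only place the hypothesis $s<r-1$ enters, and it is essential: for $s=r-1$ the $s$-set weights see only the weighted degrees, not individual edges.

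Finally I would run a short case analysis comparing the degrees of $u_0,v_0$ to $\overline{d}$. If $s=2$ then $\kappa=0$ and $\sigma=1$, and the edge conditioning alone yields a gain $p\,|w_{P_0}-\bar w|\ge pF\ge\frac{pF}{r{r\choose 2}}$; so assume $s\ge 3$ and set $\theta:=\frac{\sigma F}{4\kappa}$. If $|\deg_w(u_0)-\overline{d}|\ge\theta$ or $|\deg_w(v_0)-\overline{d}|\ge\theta$, apply the vertex conditioning to that endpoint — the ``$v\in S$'' bound when its degree exceeds $\overline{d}$, the ``$v\notin S$'' bound otherwise — to get a gain of at least $\frac{(s-1)\min(s,\,r-s)}{(r-1)(r-2)}\,\theta$. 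Otherwise $|\deg_w(u_0)-\overline{d}|,|\deg_w(v_0)-\overline{d}|<\theta$, and the displayed identity gives $|M_{u_0v_0}-pw|\ge\sigma F-2\kappa\theta=\tfrac12\sigma F$, so the edge conditioning gives a gain of at least $\tfrac12 p\,\sigma F$. To close, substitute $\sigma\ge\frac{2}{(r-2)(r-3)}$, $\kappa=\frac{(s-2)(r-s)}{(r-2)(r-3)}$ (whence $\theta\ge\frac{F}{2(s-2)(r-s)}$) and $p={s\choose 2}/{r\choose 2}$, and check that each gain is at least $\frac{pF}{r{r\choose 2}}$; in every branch this reduces to an elementary polynomial inequality in $r,s$ — for instance $r^3(r-1)\ge 4s(s-2)(r-2)$ in the vertex case and $\sigma\ge\frac{2}{r{r\choose 2}}$ in the edge case — which holds comfortably for $r\ge 4$ (using $s\le r$ and $(s-2)(r-s)\le r^2/4$). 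The main obstacle is not any single step but the bookkeeping: choosing $\theta$ and confirming that the modest target $\frac{{s\choose 2}F}{r{r\choose 2}^2}$ is beaten in each branch, while correctly using the sign freedom — we may always pass to whichever of ``above average'' or ``below average'' is favorable, both for $P_0$ and for the two endpoint degrees.
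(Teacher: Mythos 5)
Your proposal is correct, but it is a genuinely different proof from the one in the paper. The paper reduces everything to the single case $s=r-2$: it picks a uniformly random pair $Q\in{R \choose 2}$ and runs a second-moment computation on $Y_Q=\sum_{P\cap Q\neq\emptyset}w_P$, showing that a deviation of some $w_{P'}$ from $w/{r\choose 2}$ by $F$ forces $\Var{Y_Q}\geq {r\choose 2}^{-1}F^2$, hence some $Y_Q$ lies below its mean by at least roughly $F/\left(r{r\choose 2}\right)$ (after passing to the favorable sign by averaging), so the complementary $(r-2)$-set is heavy; the general $s<r-1$ bound then follows by averaging over uniformly random $s$-subsets of that $(r-2)$-set. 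You instead work with a uniformly random $s$-set directly and use only first moments: the conditional expectations given $v\in S$, $v\notin S$, or $\{u,v\}\subseteq S$ are exact affine functionals of the deviations $w_{uv}-\bar w$ and $\deg_w(\cdot)-\overline{d}$, and your coefficients check out, including $(r-2)(r-3)\sigma=(r-2)(r-3)+(s-2)(s-2r+3)\geq 2$ for $2\leq s\leq r-2$; the two-case analysis on whether an endpoint degree of $P_0$ deviates by $\theta=\sigma F/(4\kappa)$ then beats the (deliberately modest) target $\frac{{s\choose 2}F}{r{r\choose 2}^2}$ in every branch, and the closing inequalities you cite ($r^3(r-1)\geq 4s(s-2)(r-2)$, $(s-2)(r-s)\leq r^2/4$, and $\sigma\geq 2/\left(r{r\choose 2}\right)$) do hold for the relevant range $r\geq 4$, $s\geq 3$ forcing $r\geq 5$. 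What each approach buys: the paper's variance argument has far less constant bookkeeping (one computation plus one averaging step, no case analysis), while yours is more elementary (no second moment), treats all $s<r-1$ uniformly without the detour through $s=r-2$, and makes transparent exactly where the hypothesis $s<r-1$ enters, namely the strict positivity of the edge coefficient $\sigma$, which indeed vanishes at $s=r-1$. The only caveat is presentational: several steps are phrased as computations you ``would'' do, but since the stated identities and inequalities are all correct, the argument is complete as outlined.
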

\begin{proof}
We will directly handle the case $s=r-2$, from which the other cases will follow. We are interested in finding an $S \subseteq R$ of size $r-2$ with a large value for the total weight of edges in $S$. For each $S$ we give this value a name: $Z_S = \sum_{P \subseteq S} w_P$. Note that $Z_S$ is closely related the following: for $Q \in {R \choose 2}$, define $Y_Q$ to be the weight of edges incident to at least one vertex of $Q$: $Y_Q = \sum_{P \in {R \choose 2}:P \cap Q \neq \emptyset}w_P$. Then, if we take $S$ to be $R \setminus Q$, we have $Y_Q + Z_S$ is the total weight of all the edges. Thus, to show that there is a large $Z_S$, it is sufficient to show that there is a small $Y_Q$. Towards this end, choose $Q \in {R \choose 2}$ uniformly at random; we will now compute the variance of $Y_Q$.

Take, for $P \in {R \choose 2}$, $X_P$ to be $w_P$ if $P \cap Q \neq \emptyset$ and $0$ otherwise. Then, taking $w = \sum_P w_P$, we get
$$\E{X_P} = \Pr[P \cap Q \neq \emptyset]w_P = \frac{2r-3}{{r \choose 2}}w_P.$$
By linearity of expectation, we have
$$\E{Y_Q} = \sum_{P \in {R \choose 2}} \E{X_P} = \frac{2r-3}{{r \choose 2}} \sum_{P \in {R \choose 2}} w_P = \frac{2r-3}{{r \choose 2}}w,$$
and
$$\E{Y_Q^2} = \sum_{P \in {R \choose 2}} \sum_{P' \in {R \choose 2}} \E{X_PX_{P'}} =$$
$$\sum_{P \in {R \choose 2}}\E{X_P^2} + \sum_{v \in R} \sum_{(P,P') \in {R \choose 2}^2 : v \in P,v \in P', P \neq P'} \E{X_PX_{P'}} + \sum_{(P,P') \in {R \choose 2}^2 : P \cap P' = \emptyset}\E{X_PX_{P'}},$$
where the last equality follows by partitioning the pairs $P,P'$ into those which are equal, those which are distinct but intersect in some vertex $v$, and those which are disjoint.

We now look at these terms individually.
$$\E{X_P^2} = \Pr[P \cap Q \neq \emptyset]w_P^2 = \frac{2r-3}{{r \choose 2}} w_P^2.$$
For $P=\{v,u\},P'=\{v,u'\}$ distinct and intersecting, the event $P\cap Q \neq \emptyset$ and $P' \cap Q \neq \emptyset$ can occur if either $v \in Q$ or $Q=\{u,u'\}$; the first of these has probability $\frac{r-1}{{r \choose 2}}$ and the second has probability $\frac{1}{{r \choose 2}}$, and they are disjoint events. So, if $P$ and $P'$ intersect in a vertex we get:
$$\E{X_PX_{P'}}=\frac{r}{{r \choose 2}}w_Pw_{P'}.$$
If $P,P'$ are disjoint, then for $X_PX_{P'}$ to be non-zero we must have that $Q$ has an element from $P$ and an element from $P'$, which occurs with probability $\frac{4}{{r \choose 2}}$, so in this case:
$$\E{X_PX_{P'}}=\frac{4}{{r \choose 2}}w_Pw_{P'}.$$

Therefore, taking for $v \in R$ the (weighted) degree $d(v)$ to be $\sum_{P \in {R \choose 2}: v \in P} w_P$, $\E{Y_Q^2}$ is equal:

\begin{eqnarray*}
 & & \sum_{P \in {R \choose 2}} \frac{2r-3}{{r \choose 2}}w_P^2 + \sum_{v \in R} \sum_{(P,P') \in {R \choose 2}^2 : v \in P, v\in P', P \neq P'}\frac{r}{{r \choose 2}}w_Pw_{P'} + \sum_{(P,P') \in {R \choose 2} : P \cap P' = \emptyset}\frac{4}{{r \choose 2}}w_Pw_{P'} \\
& = & \frac{2r-3}{{r \choose 2}}\sum_{P \in {R \choose 2}} w_P^2 + \frac{r}{{r \choose 2}}\sum_{v \in R} \left( d(v)^2 - \sum_{P \in {R \choose 2} : v \in P} w_P^2 \right) + \frac{4}{{r \choose 2}}\left(\left(\sum_{P \in {R \choose 2}}w_P\right)^2 - \sum_{v \in R} d(v)^2 + \sum_{P \in {R \choose 2}}w_P^2\right)\\
& = & \frac{2r-3}{{r \choose 2}}\sum_{P \in {R \choose 2}}w_P^2 + \frac{r}{{r \choose 2}}\sum_{v \in R} d(v)^2 - 2\frac{r}{{r \choose 2}}\sum_{P \in {R \choose 2}}w_P^2 + \frac{4}{{r \choose 2}}w^2 - \frac{4}{{r \choose 2}}\sum_{v \in R}d(v)^2 + \frac{4}{{r \choose 2}}\sum_{P \in {R \choose 2}}w_P^2 \\
& = & \frac{4}{{r \choose 2}}w^2 + \frac{1}{{r \choose 2}}\sum_{P \in {R \choose 2}}w_P^2 + \frac{r-4}{{r \choose 2}} \sum_{v \in R}d(v)^2
\end{eqnarray*}

Note that $\sum_{v \in R} d(v)^2$ is minimized subject to the constraint $\sum_{v \in R}d(v) = 2w$ when the $d(v)$ are pairwise equal by the Cauchy-Schwarz inequality, so $\sum_{v \in R} d(v)^2 \geq \sum_{v \in R} \left(\frac{2w}{r}\right)^2$, so the above is at least

$$\frac{4}{{r \choose 2}}w^2 + \frac{1}{{r \choose 2}}\sum_{P \in {R \choose 2}}w_P^2 + \frac{r-4}{{r \choose 2}} \sum_{v \in R}\left(\frac{2w}{r}\right)^2 = $$
$$\frac{4}{{r \choose 2}}w^2 + \frac{1}{{r \choose 2}}\sum_{P \in {R \choose 2}}w_P^2 + r\frac{r-4}{{r \choose 2}} \left(\frac{2w}{r}\right)^2 = \frac{8r-16}{r{r \choose 2}}w^2 + \frac{1}{{r \choose 2}}\sum_{P \in {R \choose 2}}w_P^2.$$
The variance of $Y_Q$ satisfies:
$$\Var{Y_Q} = \E{Y_Q^2}-\E{Y_Q}^2 \geq \frac{8r-16}{r{r \choose 2}}w^2 + \frac{1}{{r \choose 2}}\sum_{P \in {R \choose 2}}w_P^2 - \left(\frac{2r-3}{{r \choose 2}}w\right)^2 = \frac{1}{{r \choose 2}}\sum_{P \in {R \choose 2}}w_P^2-\frac{1}{{r \choose 2}^2}w^2.$$

Note we may rewrite the variance as:
$$\Var{Y_Q} \geq \frac{1}{{r \choose 2}}\sum_{P \in {R \choose 2}}w_P^2-\frac{1}{{r \choose 2}^2}w^2 = \Var{w_Q}.$$

If some $w_P$ is far from $w/{r \choose 2} = \E{w_Q}$, then the variance will be large. Assume that for some $P'$ there is a non-zero real $F$ so that $w_{P'} = w/{r\choose 2} + F$. Note $\Var{Y_Q} \geq \Var{w_Q} = \E{(w_Q - w/{r \choose 2}}$ and that $(w_Q-w/{r \choose 2})^2$ is a non-negative random variable. If $Q = P'$ (which occurs with probability ${r \choose 2}^{-1}$) then this random variable has value $F^2$, so its expectation is at least ${r \choose 2}^{-1}F^2$. That is, the variance of $w_Q$ is at least ${r \choose 2}^{-1}F^2$.

Thus, there must be some $Q$ so that 
$$\size{Y_Q - \frac{2r-3}{{r \choose 2}}w} \geq {r \choose 2}^{-1/2}F \geq F/r.$$ 
If $Y_Q - \frac{(2r-3)w}{{r \choose 2}} \geq F/r$, since there are ${r \choose 2}$ different $Y_Q$ and the average is $\frac{(2r-3)w}{{r \choose 2}}$, there must be some $Q'$ so that $$Y_{Q'}-\frac{2r-3}{{r \choose 2}}w \leq \frac{-F}{r\left({r \choose 2}-1\right)} \leq -\left(r{r \choose 2}\right)^{-1}F.$$
The other case is that 
$$Y_Q - \frac{2r-3}{{r \choose 2}}w \leq -F/r \leq -\left(r{r \choose 2}\right)^{-1}F.$$ Therefore, there is some $Q$ with $Y_Q \leq \frac{(2r-3)w - F/r}{{r \choose 2}}$.

Define $S = R \setminus Q$. We get $Z_S + Y_Q = w$ so $Z_S = w-Y_Q$. By the above, there is some $S$ with 
$$Z_S \geq w - \frac{(2r-3)w - F/r}{{r \choose 2}} = \frac{{r-2 \choose 2}w+F/r}{{r \choose 2}}.$$

Taking $S$ as above, choosing a random $S' \in {S \choose s}$, we get that $\E{Z_{S'}} \geq \frac{{s \choose 2}}{{r-2 \choose 2}}\frac{{r-2 \choose 2}w+F/r}{{r \choose 2}}$. Therefore, there must be some $S' \in {S \choose s}$ with 
$$Z_{S'} \geq \frac{{s \choose 2}}{{r \choose 2}}w + \frac{{s \choose 2}F}{r{r \choose 2}{r - 2 \choose 2}} \geq \frac{{s \choose 2}}{{r \choose 2}}w + \frac{{s \choose 2}F}{r {r \choose 2}^2}.$$
\end{proof}

The case where $s < r-1$ in Lemma \ref{weightedgraphs} is an immediate corollary.

\begin{lemma}
Given weights $w_P$ for $P \in {R \choose 2}$ with $w_P \geq 0$, take $w = \sum_P w_P$. If $s < r-1$, then either there are at least ${r \choose 2}$ pairs $P$ (i.e. all of them) with $w_P > 0$ or there is some $S \subseteq R$ of size $s$ satisfying $\sum_{P \subseteq S} w_P \geq \left(1 + \left(4r {r \choose 2}^2\right)^{-1}\right)\frac{{s \choose 2}}{{r \choose 2}}w$.
\end{lemma}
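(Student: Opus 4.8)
The plan is to derive this statement immediately from the preceding lemma, essentially by substituting $F = {r \choose 2}^{-1}w$. First I would dispose of the degenerate case $w = 0$: then every $w_P$ vanishes, so $\sum_{P \subseteq S} w_P = 0 = \left(1 + \left(4r{r \choose 2}^2\right)^{-1}\right)\frac{{s \choose 2}}{{r \choose 2}}w$ for every size-$s$ set $S$, and the second alternative holds. So I may assume $w > 0$, and also that we are not in the first alternative, i.e.\ that not all ${r \choose 2}$ pairs $P$ have $w_P > 0$; since the weights are nonnegative, this means some pair $P'$ has $w_{P'} = 0$.

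Next I would apply the preceding lemma with $F := {r \choose 2}^{-1}w > 0$. Indeed, $w_{P'} = 0$ differs from the average value ${r \choose 2}^{-1}w$ by exactly $F$, so the lemma produces a set $S \subseteq R$ with $|S| = s$ and $\sum_{P \subseteq S} w_P \geq \frac{{s \choose 2}}{{r \choose 2}}w + \frac{{s \choose 2}F}{r{r \choose 2}^2}$.

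Finally, substituting $F = {r \choose 2}^{-1}w$ gives $\sum_{P \subseteq S} w_P \geq \frac{{s \choose 2}}{{r \choose 2}}w\left(1 + \left(r{r \choose 2}^2\right)^{-1}\right) \geq \left(1 + \left(4r{r \choose 2}^2\right)^{-1}\right)\frac{{s \choose 2}}{{r \choose 2}}w$, the last step merely weakening the constant in the denominator from $r{r \choose 2}^2$ to $4r{r \choose 2}^2$. There is no genuine obstacle here; the only things to keep in mind are to treat the $w = 0$ case separately and to note that the factor $4$ in the target is deliberately loose (any constant $\geq 1$ would work), which leaves room for the applications in the next section.
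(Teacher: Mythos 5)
Your proposal is correct and is essentially identical to the paper's own proof: the paper likewise notes that if not all weights are positive then some $w_{P'}=0$ deviates from the mean ${r \choose 2}^{-1}w$ by $F = w/{r \choose 2}$, applies the preceding discrepancy lemma, and then weakens the resulting factor $\left(1+\left(r{r \choose 2}^2\right)^{-1}\right)$ to $\left(1+\left(4r{r \choose 2}^2\right)^{-1}\right)$. Your separate treatment of the degenerate case $w=0$ is a harmless extra precaution, not a different approach.
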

\begin{proof}
Assume there is some $P'$ with $w_{P'} = 0$. We may apply the previous lemma with $F=w/{r \choose 2}$, since $w_{P'}$ differs from $w/{r \choose 2}$ by $F$. This gives that there is some set $S \subseteq R$ of size $s$ satisfying:

$$\sum_{P \subseteq S} w_P \geq \left(\frac{{s \choose 2}}{{r \choose 2}} + \frac{{s \choose 2}}{r {r \choose 2}^3}\right)w \geq \left(1 + \left(4r {r \choose 2}^2\right)^{-1}\right)\frac{{s \choose 2}}{{r \choose 2}}w.$$
\end{proof}

The following lemma states that if in a weighted graph there is a vertex whose degree deviates from the average, then there is a set $S \subseteq R$ of size $r-1$ so that the sum of the weights of the edges contained in $S$ is substantially larger than average.

\begin{lemma} \label{degrees}
Given weights $w_P$ for $P \in {R \choose 2}$ with $w_P \geq 0$, take $w = \sum_P w_P$. For $v \in R$, define $d(v) := \sum_{P : v \in P} w_P$. If there is some $v \in R$ for which $d(v)$ differs from $2w/r$ by at least $F$, then there is some $S \subseteq R$ of size $r-1$ with $\sum_{P \subseteq S} w_P \geq \frac{{r-1 \choose 2}}{{r \choose 2}}w + F/r.$
\end{lemma}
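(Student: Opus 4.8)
The plan is to exploit the bijection between subsets $S\subseteq R$ of size $r-1$ and single vertices $v\in R$, via $S=R\setminus\{v\}$. Under this correspondence the edges \emph{not} internal to $S$ are precisely the edges incident to $v$, so
$$\sum_{P\subseteq S}w_P \;=\; w-d(v).$$
Since $\binom{r-1}{2}/\binom{r}{2}=\frac{(r-1)(r-2)}{r(r-1)}=\frac{r-2}{r}$, the desired inequality $\sum_{P\subseteq S}w_P\ge \frac{\binom{r-1}{2}}{\binom{r}{2}}w+\frac{F}{r}$ is equivalent to exhibiting a vertex $v$ with $d(v)\le \frac{2w}{r}-\frac{F}{r}$. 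So the whole statement reduces to a small-degree-vertex claim.

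First I would record the trivial degree-sum identity $\sum_{v\in R}d(v)=2\sum_P w_P=2w$, so the average of the $d(v)$ is exactly $\frac{2w}{r}$. We may assume $F\ge 0$ (this is the intended reading of ``differs by at least $F$''). Let $v_0$ be the vertex guaranteed by hypothesis, so $|d(v_0)-\frac{2w}{r}|\ge F$. I would then split into two cases. If $d(v_0)\le \frac{2w}{r}-F$, then since $r\ge 2$ we have $F\ge \frac{F}{r}$, hence $d(v_0)\le \frac{2w}{r}-\frac{F}{r}$ and we take $v=v_0$. If instead $d(v_0)\ge \frac{2w}{r}+F$, then the remaining $r-1$ degrees sum to at most $2w-(\frac{2w}{r}+F)=\frac{2(r-1)w}{r}-F$, so their average is at most $\frac{2w}{r}-\frac{F}{r-1}\le \frac{2w}{r}-\frac{F}{r}$; pick $v\ne v_0$ attaining at most this average.

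In either case we obtain $v$ with $d(v)\le \frac{2w}{r}-\frac{F}{r}$, and setting $S=R\setminus\{v\}$ gives
$$\sum_{P\subseteq S}w_P=w-d(v)\ge w-\frac{2w}{r}+\frac{F}{r}=\frac{(r-2)w}{r}+\frac{F}{r}=\frac{\binom{r-1}{2}}{\binom{r}{2}}w+\frac{F}{r},$$
as required. There is no real obstacle here; the only points demanding a moment's care are the two elementary reductions $\frac{1}{r}\le\frac{1}{r-1}$ and $F\ge \frac{F}{r}$, which make the two cases line up with the clean bound $\frac{F}{r}$ rather than $\frac{F}{r-1}$.
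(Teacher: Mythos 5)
Your proof is correct and follows essentially the same route as the paper's: the complement correspondence $S=R\setminus\{v\}$ with $\sum_{P\subseteq S}w_P=w-d(v)$, the same two cases on whether $d(v_0)$ is below $2w/r-F$ or above $2w/r+F$, and the same averaging over the remaining $r-1$ degrees yielding a vertex of degree at most $2w/r-F/(r-1)\le 2w/r-F/r$. No gaps.
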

\begin{proof}
Choose a vertex $v$ for which $\size{d(v) - 2w/r} \geq F$. If $d(v) \leq 2w/r - F$, then we may take $S = V \setminus \{v\}$. This gives:
$$\sum_{P \subseteq S} w_P = w - d(v) \geq w-(2w/r - F) = \frac{{r-1 \choose 2}}{{r \choose 2}}w + F \geq \frac{{r-1 \choose 2}}{{r \choose 2}}w + F/r.$$
Otherwise, we have $d(v) \geq 2w/r + F$. Since $\sum_u d(u) = 2w$,
$$\sum_{u \neq v} d(u) \leq 2w - (2w/r+F) = ((r-1)/r)2w-F.$$
Since the average is $2w/r$, there is some $u$ with 
$$d(u) \leq \left(\frac{r-1}{r}2w-F\right)/(r-1) = 2w/r-F/(r-1) \leq 2w/r-F/r.$$ We may take $S = V \setminus \{u\}$. We get:
$$\sum_{P \subseteq S} w_P = w - d(u) \geq w-(2w/r - F/r) = \frac{{r-1 \choose 2}}{{r \choose 2}}w + F/r.$$
\end{proof}

A strengthening of the case $s=r-1$ and $r$ is even in Lemma \ref{weightedgraphs} is a corollary.

\begin{lemma}
Given weights $w_P$ for $P \in {R \choose 2}$ with $w_P \geq 0$, take $w = \sum_P w_P$. Either there are at least $r/2$ pairs $P$ for which $w_P \geq w/r^2$ or there is some $S \subseteq R$ of size $r-1$ with $\sum_{P \subseteq S} w_P \geq \left(1 + \left(4r {r \choose 2}^2\right)^{-1}\right)\frac{{r-1 \choose 2}}{{r \choose 2}}w.$
\end{lemma}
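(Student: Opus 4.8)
The plan is to reduce this to Lemma~\ref{degrees} (the degree-deviation lemma): I will show that if the first alternative fails, then some vertex of $R$ has degree far below the average $2w/r$, and then feed the resulting deviation into that lemma.

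First I would dispose of the degenerate cases. If $w=0$ the statement is immediate, since then every one of the $\binom{r}{2}\ge r/2$ pairs satisfies $w_P\ge 0 = w/r^2$; and for $r=2$ the single pair carries all the weight $w\ge w/4$, so again the first alternative holds. So assume $w>0$ and (using that $r$ is even) $r\ge 4$. Now suppose there are fewer than $r/2$ pairs $P$ with $w_P\ge w/r^2$; since $r$ is even this means at most $r/2-1$ such ``heavy'' pairs. These at most $r/2-1$ heavy pairs are incident, in total, to at most $2(r/2-1)=r-2$ of the $r$ vertices, so there is a vertex $v\in R$ lying in no heavy pair. Then every one of the $r-1$ pairs containing $v$ has weight strictly less than $w/r^2$, so
$$d(v)=\sum_{P:\,v\in P} w_P < (r-1)\frac{w}{r^2}\le \frac{w}{r},$$
and hence $d(v)$ differs from the average degree $2w/r$ by at least $w/r$.

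Next I apply Lemma~\ref{degrees} with $F=w/r$ to obtain a set $S\subseteq R$ of size $r-1$ with
$$\sum_{P\subseteq S} w_P \ge \frac{\binom{r-1}{2}}{\binom{r}{2}}\,w + \frac{F}{r} = \frac{\binom{r-1}{2}}{\binom{r}{2}}\,w + \frac{w}{r^2}.$$
It then remains only to check that the additive term $w/r^2$ absorbs the required multiplicative slack: using $\binom{r-1}{2}\le\binom{r}{2}$ and $\binom{r}{2}\ge r$ (valid since $r\ge 3$),
$$\left(4r\binom{r}{2}^2\right)^{-1}\frac{\binom{r-1}{2}}{\binom{r}{2}}\,w \;\le\; \left(4r\binom{r}{2}^2\right)^{-1}w \;\le\; \frac{w}{4r^3} \;\le\; \frac{w}{r^2},$$
so that $\sum_{P\subseteq S} w_P \ge \left(1+\left(4r\binom{r}{2}^2\right)^{-1}\right)\frac{\binom{r-1}{2}}{\binom{r}{2}}\,w$, which is the second alternative.

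The only step with any genuine content is the pigeonhole observation that at most $r/2-1$ edges must leave some vertex untouched, which is what lets us locate a vertex of tiny degree; once Lemma~\ref{degrees} is available everything else is arithmetic comparison of the constants, so I do not anticipate any real obstacle. The one place to stay careful is the handling of the trivial cases ($w=0$ and $r=2$), which I would dispatch at the very start as above.
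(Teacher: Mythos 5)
Your proof is correct and essentially identical to the paper's: when fewer than $r/2$ heavy pairs exist, locate a vertex $v$ incident to none of them, bound $d(v) \le (r-1)w/r^2 \le w/r$, and apply Lemma~\ref{degrees} with $F = w/r$, finishing with the same arithmetic comparison of $w/r^2$ against the slack term. The only thing to drop is your gratuitous assumption that $r$ is even: the lemma is stated without a parity restriction and is in fact invoked later for odd $r$, and your pigeonhole works verbatim there, since fewer than $r/2$ heavy pairs touch at most $r-1$ vertices and thus still leave some vertex untouched.
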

\begin{proof}
If there are fewer than $r/2$ pairs $P$ for which $w_P \geq w/r^2$, then we must have that there is some vertex $v$ not adjacent to any such pair. For this $v$, $d(v) \leq (r-1)w/r^2 \leq w/r$. The previous lemma gives that there is some set $S$ of size $r-1$ with:
$$\sum_{P \subseteq S} w_P \geq \left(\frac{{r-1 \choose 2}}{{r \choose 2}} + \frac{1}{r^2}\right)w \geq \left(1 + \left(4r {r \choose 2}^2\right)^{-1}\right)\frac{{r-1 \choose 2}}{{r \choose 2}}w.$$
\end{proof}

Finally, we prove a strengthening of the case $s=r-1$ and $r$ is odd in Lemma \ref{weightedgraphs}:

\begin{lemma}
Given weights $w_P$ for $P \in {R \choose 2}$ with $w_P \geq 0$, take $w = \sum_P w_P$. If $r$ is odd either there are at least $(r+3)/2$ pairs $P$ for which $w_P > 0$ or there is some $S \subseteq R$ of size $r-1$ with $\sum_{P \subseteq S} w_P \geq \left(1 + \left(4r {r \choose 2}^2\right)^{-1}\right)\frac{{r-1 \choose 2}}{{r \choose 2}}w.$
\end{lemma}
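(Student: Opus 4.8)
The plan is to prove the contrapositive. Assume there are at most $(r+1)/2$ pairs $P$ with $w_P>0$; I will produce a set $S\subseteq R$ of size $r-1$ with $\sum_{P\subseteq S}w_P\geq\left(1+\left(4r{r\choose 2}^2\right)^{-1}\right)\frac{{r-1\choose 2}}{{r\choose 2}}w$. Write $\epsilon:=\left(4r{r\choose 2}^2\right)^{-1}$, and recall $\frac{{r-1\choose 2}}{{r\choose 2}}=\frac{r-2}{r}$, so the target reads $\sum_{P\subseteq S}w_P\geq(1+\epsilon)\frac{r-2}{r}w$. The key observation is that every $S$ of size $r-1$ has the form $R\setminus\{v\}$, and then $\sum_{P\subseteq S}w_P=w-d(v)$, where $d(v):=\sum_{P\ni v}w_P$ as in Lemma~\ref{degrees}. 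Hence it suffices to exhibit a vertex $v$ with $d(v)\leq\frac{2}{r+1}w$. Let $H$ be the graph on vertex set $R$ whose edges are the pairs $P$ with $w_P>0$; by assumption $\size{E(H)}\leq(r+1)/2$.

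First I would dispose of the case that $H$ has an isolated vertex $v$. Then $d(v)=0$, and $S=R\setminus\{v\}$ gives $\sum_{P\subseteq S}w_P=w\geq(1+\epsilon)\frac{r-2}{r}w$, using the elementary bound $\epsilon\leq\frac{2}{r-2}$. (Alternatively, this is Lemma~\ref{degrees} applied with $F=2w/r$.)

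The main case is when $H$ has no isolated vertex, so $H$ covers all $r$ vertices. Since each edge covers at most two vertices and $r$ is odd, $\size{E(H)}\geq\lceil r/2\rceil=(r+1)/2$; combined with the standing assumption this forces $\size{E(H)}=(r+1)/2$. Then $\sum_{v\in R}d_H(v)=2\size{E(H)}=r+1$ with each $d_H(v)\geq1$, so $\sum_v(d_H(v)-1)=1$: exactly one vertex has $H$-degree $2$ and every other vertex has $H$-degree $1$. Tracing the edges out of the degree-$2$ vertex shows $H$ is the disjoint union of a path on three vertices and a perfect matching on the remaining $r-3$ vertices (for $r=3$ there is no matching part and $H$ is just the path). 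In particular every edge of $H$ has an endpoint of $H$-degree $1$, so deleting that endpoint removes exactly that edge's weight and nothing else. Picking $e$ to be a minimum-weight edge of $H$, we get $w_e\leq w/\size{E(H)}=\frac{2}{r+1}w$, and deleting a degree-$1$ endpoint $v$ of $e$ yields $S=R\setminus\{v\}$ of size $r-1$ with
$$\sum_{P\subseteq S}w_P=w-w_e\geq\frac{r-1}{r+1}w\geq(1+\epsilon)\frac{r-2}{r}w,$$
the last step being the routine check $\epsilon\leq\frac{2}{(r-2)(r+1)}$, which holds since $\epsilon^{-1}=r^{3}(r-1)^{2}$.

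I expect the only real content to be the structural step in the main case: showing that a positively-weighted support with at most $(r+1)/2$ edges and no isolated vertex must be a "path-plus-matching" graph, so that one can always isolate the cheapest edge by deleting a single vertex. The comparisons with $\epsilon$ and the boundary value $r=3$ are routine. This is exactly the obstruction reflected in the introduction's extremal construction, where for odd $r$ the optimum uses a triangle together with a matching and therefore genuinely needs $(r+3)/2$ positively-weighted pairs.
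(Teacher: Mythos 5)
Your proof is correct, and it takes a genuinely different route from the paper. The paper argues by contradiction using its surrounding machinery: assuming no $S$ of size $r-1$ beats the threshold, it forms the unweighted graph whose edges are the pairs of weight at least $w/(4r^3)$, invokes the even-$r$ lemma to get minimum degree $1$ and at least $(r+1)/2$ edges of weight at least $w/r^2$, and then runs a somewhat delicate argument via Lemma~\ref{degrees} to show a vertex incident to two heavy edges has both neighbors of degree at least $2$, forcing degree sum at least $r+3$ and hence $(r+3)/2$ edges (in fact all of weight at least $w/(4r^3)$, a slight strengthening not needed in the statement). You instead prove the contrapositive directly on the support graph $H$ of positively weighted pairs: with at most $(r+1)/2$ such pairs, either $H$ has an isolated vertex (delete it and keep all of $w$), or, since $r$ is odd, $H$ has exactly $(r+1)/2$ edges and degree sequence with a single vertex of degree $2$, i.e.\ a three-vertex path plus a perfect matching on the remaining $r-3$ vertices; every edge then has a degree-$1$ endpoint, so deleting such an endpoint of a minimum-weight edge retains at least $\frac{r-1}{r+1}w$, and the routine comparison $\left(4r{r\choose 2}^2\right)^{-1}=\left(r^3(r-1)^2\right)^{-1}\leq\frac{2}{(r+1)(r-2)}$ finishes the argument. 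Your computations check out (in particular $\sum_{P\subseteq S}w_P=w-d(v)$, the averaging bound $w_e\leq \frac{2w}{r+1}$, and the handling of $r=3$), and your version is more elementary and self-contained, producing the good set $S$ explicitly with a larger quantitative margin, whereas the paper's version leans on Lemma~\ref{degrees} and the even case it had already established and yields the extra (unused) conclusion that the $(r+3)/2$ pairs are not merely positive but of weight $\Omega(w/r^3)$.
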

\begin{proof}
Assume there is no $S \subseteq R$ of size $r-1$ with $\sum_{P \subseteq S} w_P \geq \left(1 + \left(4r {r \choose 2}^2\right)^{-1}\right)\frac{{r-1 \choose 2}}{{r \choose 2}}w.$ In this case there is no $S \subseteq R$ of size $r-1$ with $\sum_{P \subseteq S} w_P \geq \frac{{r-1 \choose 2}}{{r \choose 2}}w + w/(4r^3)$, as this latter term is larger than $\left(1 + \left(4r {r \choose 2}^2\right)^{-1}\right)\frac{{r-1 \choose 2}}{{r \choose 2}}w$.

We define an unweighted graph $G=(V,E)$ by taking $V=R$ and a possible edge $e \in {R \choose 2}$ is in $E$ if and only if $w_e \geq  w/(4 r^3)$. By the previous lemma, $G$ has at least $r/2$ edges $e$ satisfying $w_e \geq w/r^2$, and, indeed, the proof of the previous lemma shows that every vertex must have degree at least $1$. Since $r$ is odd, $G$ must have at least $(r+1)/2$ edges $e$ with $w_e \geq w/r^2$, and so it must have some vertex $v$ incident to two such edges. 

Fix two neighbors $v_1,v_2$ of $v$ so that $w_{\{v,v_1\}}$ and $w_{\{v,v_2\}}$ both have weight at least $w/r^2$. We claim that both $v_1$ and $v_2$ have degree at least $2$ in $G$. Assume at least one of them, without loss of generality $v_1$, has degree one. We must have $d(v) \leq 2w/r + w/(2r^2)$, for otherwise we have a contradiction by Lemma \ref{degrees}. However, this gives that, since $w_{\{v,v_2\}} \geq w/r^2$, we must have $w_{\{v,v_1\}} \leq d(v) - w_{\{v,v_2\}} \leq 2w/r - w/(2r^2)$. Then all other $P$ incident to $v_1$ have weight at most $w/(4r^3)$, so 
$$w_{\{v,v_1\}} \leq w_{\{v,v_1\}} + (r-2) w/(4r^3) \leq 2w/r-w/(2r^2) + rw/(4r^3) = 2w/r-w/(4r^2).$$
Then by Lemma \ref{degrees} we have reached a contradiction.

Therefore, we must have that there are at least $3$ vertices of degree $2$ in $G$ and that every vertex has degree at least $1$. Then the sum of the degrees is at least $6 + (r-3) = r+3$ and so the number of edges of $G$ must be at least $(r+3)/2$, as desired.
\end{proof}

\section{Proof of Lemma~\ref{factsaboutfepsilon}}
\label{appendixC}
For convenience, we restate both the lemma and definition of $f_\epsilon$ here:

$f_\epsilon(\ell) = (\log m)^{C\left(\frac{\log(\alpha m^\epsilon)}{\log m}\right)}$, where $\alpha = \ell/n$. Recall also $m_0$ from Equation \ref{definitionm}.

\begin{lemma}
The following statements hold about $f_\epsilon$ for every choice of $\epsilon \geq 0$, $n > 1$, and $m \geq m_0$.
\begin{enumerate}
\item For any $\alpha\in [\frac{1}{n},1]$, 
$$f_\epsilon(\alpha n) \geq \alpha^{1/\left(2 {r-2 \choose s-2}\right)} f_\epsilon(n).$$ 
In particular, $f_\epsilon(\alpha n) \geq \alpha f_\epsilon(n)$.
\item For any $\alpha_1,\alpha_2,\alpha_3 \in [\frac{1}{n},1]$ with $\alpha_1+\alpha_2+\alpha_3=1$, taking $n_i = \alpha_i n$,
$$nf_\epsilon(n) \leq \sum_i n_if_\epsilon(n_i) + 3(\log^{-3/4}m)nf_\epsilon(n).$$
\item For $i \geq 0$ and $m^{\delta} \geq 2^j \geq 1$ we have $f_\epsilon(2^i)\log^{2/{r-2 \choose s-2}}((\log^{1/4}m)2^j) \geq 256{r \choose 2} f_\epsilon(2^{i+2j})$.
\item For any $\alpha \geq \log^{-1}m$, $f_\epsilon(\alpha n) \geq f_\epsilon(n)/2$.
\end{enumerate}
\end{lemma}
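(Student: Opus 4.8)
\medskip\noindent\textbf{Proof plan.} The plan is first to put $f_\epsilon$ into a transparent form. Writing $L:=\log m$ and unwinding the definition $f_\epsilon(\ell)=L^{C\log((\ell/n)m^\epsilon)/L}$, one has
$$f_\epsilon(\ell)\;=\;L^{C\epsilon}\cdot L^{C\log(\ell/n)/L}\;=\;L^{C\epsilon}\,(\ell/n)^{\eta},\qquad \eta:=C\,\frac{\log\log m}{\log m},$$
so that $f_\epsilon$ is, up to the $\ell$-independent factor $L^{C\epsilon}$, simply the power $(\ell/n)^{\eta}$. The whole proof then rests on the observation that for $m\ge m_0=2^{2^{2^{2^{8r^2}}}}$ the exponent $\eta$ is minuscule: applying $\log$ four times to $m_0$ already yields $8r^2$, so $\log\log m$ dwarfs every fixed polynomial in $r$, and in particular $\eta\le 1/\bigl(2{r-2\choose s-2}\bigr)$, $\eta\le 1$, $\eta\ln 3\le 3(\log m)^{-3/4}$, and $\eta\log\log m\le 1$. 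Each of these follows from the single estimate $2{r-2\choose s-2}C\log\log m\le\log m$, valid for $m\ge m_0$; I would dispatch all of them in one preliminary paragraph. Write $a:={r-2\choose s-2}$.

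Facts 1 and 4 then become one-liners. For Fact 1, $f_\epsilon(\alpha n)=\alpha^{\eta}f_\epsilon(n)$, and $0<\alpha\le 1$ together with $\eta\le 1/(2a)\le 1$ gives $\alpha^{\eta}\ge\alpha^{1/(2a)}\ge\alpha$. For Fact 4, if $\alpha\ge(\log m)^{-1}$ then $f_\epsilon(\alpha n)/f_\epsilon(n)=\alpha^{\eta}\ge(\log m)^{-\eta}=2^{-\eta\log\log m}\ge 2^{-1}$, using $\eta\log\log m\le 1$. For Fact 2, dividing through by $nf_\epsilon(n)=nL^{C\epsilon}$ reduces the claim to $\sum_i\alpha_i^{1+\eta}\ge 1-3(\log m)^{-3/4}$; from $\alpha_i^{\eta}=e^{\eta\ln\alpha_i}\ge 1+\eta\ln\alpha_i$ we get $\alpha_i^{1+\eta}\ge\alpha_i+\eta\alpha_i\ln\alpha_i$, hence $\sum_i\alpha_i^{1+\eta}\ge 1+\eta\sum_i\alpha_i\ln\alpha_i\ge 1-\eta\ln 3$ (since $\sum_i\alpha_i\ln\alpha_i\ge -\ln 3$ for any probability vector on three atoms), and $\eta\ln 3\le 3(\log m)^{-3/4}$ by the smallness estimate.

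Fact 3 is the step I expect to be the main obstacle, since it is the only one that uses the precise calibration $\delta=(4{r-2\choose s-2}C)^{-1}$. Put $p:=2/a$. Because $f_\epsilon(2^{i+2j})/f_\epsilon(2^i)=(2^{2j})^{\eta}=2^{2\eta j}$ and $\log\bigl((\log^{1/4}m)2^{j}\bigr)=\tfrac14\log\log m+j$, the desired inequality $f_\epsilon(2^i)\log^{2/a}\bigl((\log^{1/4}m)2^j\bigr)\ge 256{r\choose 2}f_\epsilon(2^{i+2j})$ is, after taking logarithms, equivalent to
$$p\,\log\!\Bigl(\tfrac14\log\log m+j\Bigr)\;\ge\;\log\!\bigl(256{\textstyle{r\choose 2}}\bigr)+2\eta j\qquad\text{for }0\le j\le\delta\log m.$$
The left-hand side is concave in $j$ and the right-hand side is linear, so it suffices to verify the inequality at the two endpoints $j=0$ and $j=\delta\log m$. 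At $j=0$ the right-hand side is a constant depending only on $r$ while the left-hand side is $p\log(\tfrac14\log\log m)$, which is enormous for $m\ge m_0$. At $j=\delta\log m$ the key identity is $2\eta\,\delta\log m=2C\delta\log\log m=\tfrac{p}{4}\log\log m$ (since $2C\delta=\tfrac1{2a}=\tfrac p4$); because $\tfrac14\log\log m+\delta\log m\ge\delta\log m=\tfrac{\log m}{4aC}$, the left-hand side is at least $p\log\log m-p\log(4aC)$, so the inequality reduces to $\tfrac{3p}{4}\log\log m\ge\log\bigl(256{r\choose 2}\bigr)+p\log(4aC)$, again immediate for $m\ge m_0$. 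Concavity then fills in the interior of the interval, completing the proof. The only remaining work is the routine verification of the handful of smallness inequalities asserted in the first paragraph, which follow directly from the tower bound defining $m_0$.
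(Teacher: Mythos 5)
Your proof is correct, and for Facts 1, 3 and 4 it is essentially the paper's argument: you rewrite $f_\epsilon(\ell)=(\log m)^{C\epsilon}(\ell/n)^{\eta}$ with $\eta=C\log\log m/\log m$ tiny, and for Fact 3 you take logarithms and compare a concave function of $j$ with a linear one, checking only the endpoints $j=0$ and $j=\delta\log m$ via the identity $2C\delta=1/\left(2{r-2\choose s-2}\right)$ --- the paper does the same (phrased as the derivative being monotone decreasing, after the cosmetic substitution $j'=j+\frac14\log\log m$). Where you genuinely diverge is Fact 2: the paper restricts to the indices with $\alpha_i\ge\log^{-3/4}m$, applies Jensen's inequality to the convex function $\ell f_\epsilon(\ell)$, and then bounds $f_\epsilon(n)-f_\epsilon(n/4)$, whereas you reduce the claim to $\sum_i\alpha_i^{1+\eta}\ge 1-3\log^{-3/4}m$ and prove it directly from $\alpha^{\eta}\ge 1+\eta\ln\alpha$ together with the entropy bound $\sum_i\alpha_i\ln(1/\alpha_i)\le\ln 3$; this is shorter, avoids the truncation of small parts, and in fact never uses the hypothesis $\alpha_i\ge 1/n$. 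One small inaccuracy to repair in the write-up: your third and fourth smallness claims ($\eta\ln 3\le 3\log^{-3/4}m$ and $\eta\log\log m\le 1$) do not follow from the single estimate $2{r-2\choose s-2}C\log\log m\le\log m$; they require the separate bounds $C\ln 3\,\log\log m\le 3\log^{1/4}m$ and $C(\log\log m)^2\le\log m$, both of which hold comfortably for $m\ge m_0$, so nothing breaks, but they should be stated and verified individually.
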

\begin{proof}

\noindent {\bf Proof of Fact 1:}
Note 
$$f_\epsilon(\alpha n) = (\log m)^{C\left(\epsilon + \frac{\log \alpha}{\log m}\right)} = (\log m)^{C\frac{\log \alpha}{\log m}}(\log m)^{C\epsilon} = \alpha^{C \frac{\log\log m}{\log m}}f_\epsilon(n).$$
Since $0 < \alpha \leq 1$, it is sufficient to show that $C (\log\log m)/\log m \leq \left(2{r-2 \choose s-2}\right)^{-1}$. This holds because $C$, $\log\log m$ and $2{r-2 \choose s-2}$ are at most $\log^{1/3}m$, since $m \geq m_0$.

\noindent {\bf Proof of Fact 2:}
Note that the function $\ell f_\epsilon(\ell)$ is a convex function of $\ell$. Indeed, 
$$f_\epsilon(\ell) = (\log m)^{C\epsilon}(\log m)^{(\log \ell)/\log m} = (\log m)^{C \epsilon}\ell^{(\log\log m)/\log m}.$$
That is, $f_\epsilon(\ell)$ is a polynomial of degree greater than $0$ in $\ell$, so $\ell f_\epsilon(\ell)$ is a polynomial of degree greater than $1$ in $\ell$ and so is convex.

Take $S = \{ i : \alpha_i \geq \log^{-3/4} m\}$. Take $\kappa$ such that $\sum_{i \in S} \alpha_i = \kappa$. Note $\kappa \geq 1-2\log^{-3/4} m$.

$$\sum_i n_if_\epsilon(n_i) \geq \sum_{i \in S} n_if_\epsilon(n_i) \geq \sum_{i \in S}\frac{\kappa}{\size{S}}nf_\epsilon\left(\frac{\kappa}{\size{S}}n\right) =$$
$$\kappa n f_\epsilon \left(\frac{\kappa}{\size{S}}n\right) \geq \kappa n f_\epsilon\left(\frac{1-2\log^{-1}m}{3}n\right) \geq \kappa n f_\epsilon(n/4),$$
where the second inequality follows by Jensen's inequality applied to the convex function $\ell f_\epsilon(\ell)$.

This gives

$$\kappa n f_\epsilon(n) - \sum_{i \in S} n_if_\epsilon(n_i) \leq \kappa n f_\epsilon(n) - \kappa n f_\epsilon(n/4) = \kappa n (f_\epsilon(n) - f_\epsilon(n/4)).$$

We now consider
$$f_\epsilon(n)-f_\epsilon(n/4) = (\log m)^{C \epsilon} - (\log m)^{C \left(\epsilon + \frac{\log(1/4)}{\log m}\right)} = (\log m)^{C \epsilon}\left(1-(\log m)^{-2C/\log m}\right).$$

The second factor satisfies:
$$1-(\log m)^{-2C/\log m} = 1-2^{-2C(\log\log m)/\log m} \leq$$
$$1-(1-2C (\log\log m)/\log m) = 2 C (\log\log m)/\log m \leq \log^{-3/4} m,$$
where the first inequality follows by $2^x \geq 1+x$ for $x \leq 0$.

Thus,

$$nf_\epsilon(n) - \sum_i n_i f_\epsilon(n_i) \leq (1 - \kappa)nf_\epsilon(n) + \kappa nf_\epsilon(n) - \sum_{i \in S} n_if(n_i) \leq$$ 
$$2(\log^{-3/4}m)nf_\epsilon(n) + (\log^{-3/4}m)nf_\epsilon(n) \leq 3(\log^{-3/4}m)nf_\epsilon(n).$$

\noindent {\bf Proof of Fact 3:}
We prove a slightly stronger statement. Take $j' = j + \frac{1}{4}(\log\log m)$ so that $2^{j'} = (\log m)^{1/4}2^j$. We will show that 
$$f_\epsilon(2^i)\log^{2/{r-2 \choose s-2}}(2^{j'}) \geq f_\epsilon(2^{i + 2j'}).$$
This is indeed stronger than the original statement as $f_\epsilon(\ell)$ is an increasing function of $\ell$.

Consider

$$f_\epsilon(2^i) = (\log m)^{C\left(\epsilon + \frac{\log(2^i/n)}{\log m}\right)} = (\log m)^{C \epsilon}(\log m)^{C\frac{i}{\log m}}(\log m)^{-C\frac{\log n}{\log m}}.$$
Similarly,
$$f_\epsilon(2^{i+2j'}) = (\log m)^{C \epsilon}(\log m)^{C\frac{i + 2j'}{\log m}}(\log m)^{-C\frac{\log n}{\log m}}.$$
Therefore, it is sufficient to show that
$$(\log m)^{C\frac{i}{\log m}} \log^{2/{r-2 \choose s-2}}(2^{j'}) \geq 256{r \choose 2}(\log m)^{C\frac{i + 2j'}{\log m}},$$
or equivalently that 
$$\log^{2/{r-2 \choose s-2}}(2^{j'}) \geq 256{r \choose 2}(\log m)^{C\frac{2j'}{\log m}}.$$

Taking logarithms of both sides, we see that it is sufficient to have
$$2 (\log j')/{r-2 \choose s-2} \geq 2C j' (\log\log m)/(\log m) + \log\left(256{r \choose 2}\right),$$
or equivalently
$$2(\log j')/{r-2 \choose s-2} - 2C j'(\log\log m)/(\log m) - \log\left(256{r \choose 2}\right) \geq 0.$$

We consider the first derivative of this with respect to $j'$: it is $2(\ln(2)j')^{-1}/{r-2 \choose s-2} - 2C (\log\log m)/(\log m)$. Note this derivative is monotone decreasing for $j' \in [1,\infty)$, so the minimum of $2(\log j')/{r-2 \choose s-2} - 2Cj'(\log\log m)/(\log m) - \log\left(256{r \choose 2}\right)$ must be achieved at either the largest or smallest possible value of $j'$. We have assumed $m^{\delta}\log^{1/4}m \geq 2^{j'} \geq \log^{1/4}m$, so $2\delta\log m \geq \delta(\log m) + (\log\log m)/4 \geq j' \geq (\log\log m)/4$. We consider the two extrema.

If $j' = (\log\log m)/4$:
$$2\log((\log\log m)/4)/{r-2 \choose s-2} - \frac{C}{2}\log^2(\log m)/(\log m) - \log\left(256{r \choose 2}\right) \geq$$
$$2\log((\log\log m)/4)/{r-2 \choose s-2} - 1 - \log\left(256{r \choose 2}\right) \geq 0,$$
where the last inequality follows from $m \geq m_0$.

If $j' = 2\delta\log m$:
$$2\log(2\delta\log m)/{r-2 \choose s-2}-4\delta C (\log\log m) - \log\left(256{r \choose 2}\right) =$$
$$2\log(2\delta \log m)/{r-2 \choose s-2} - (\log\log m)/{r-2 \choose s-2} - \log\left(256{r \choose 2}\right) \geq 0,$$
where the last inequality follows from $m \geq m_0$.

\noindent {\bf Proof of Fact 4:}
Since $f_\epsilon$ is increasing, it is sufficient to show this for $\alpha = \log^{-1}m$. Then

$$f_\epsilon(\log^{-1} m) = (\log m)^{C\left( \epsilon - (\log\log m)/(\log m) \right)} =$$
$$(\log m)^{C \epsilon}2^{-(\log\log m)^2/\log m} \geq (\log m)^{C\epsilon}2^{-1} = f_\epsilon(n)/2.$$

\end{proof}

\end{document}